\newcommand{\dist}{\text{dist}} %%distance
\newcommand{\diam}{\text{diam}}
\newcommand{\R}{{\mathbb R}} %%reals
\def \e {\varepsilon}
\def \e{\epsilon}
\renewcommand{\(}{\left(}
\renewcommand{\)}{\right)}
\theoremstyle{plain}
\newtheorem{theorem}{Theorem}[section]
\newtheorem{lemma}[theorem]{Lemma}
\newtheorem{definition}[theorem]{Definition}
\newtheorem{remark}[theorem]{Remark}
\providecommand{\bysame}{\makebox[3em]{\hrulefill}\thinspace}
\begin{document}

\setcounter{equation}{0}

%\documentstyle[12pt]{article}

%\usepackage{amsmath}
%\usepackage{amsfonts}
%\usepackage{amssymb}
%\usepackage{amscd}

%\setlength{\headheight}{15pt}
%\setlength{\topmargin}{10pt}
%\setlength{\headsep}{30pt}
%\setlength{\textwidth}{15cm}
%\setlength{\textheight}{21.5cm}
%\setlength{\oddsidemargin}{1cm}
%\setlength{evensidemargin}{1cm}

%\newtheorem{theorem}{Theorem}[section]

%\newtheorem{lemma}[theorem]{Lemma}

%\newtheorem{corollary}[theorem]{Corollary}

%\newtheorem{proposition}[theorem]{Proposition}

%\newtheorem{definition}[theorem]{Definition}

%\begin{document}

\title{$C^{1,\alpha}$-estimates for the near field refractor}
	
\author[C. E. Guti\'errez and F. Tournier]
{Cristian E. Guti\'errez\\
 and \\
 Federico Tournier}
\thanks{\today\\The first author was partially supported
by NSF grant DMS--1600578.}
\address{Department of Mathematics\\Temple University\\Philadelphia, PA 19122}
\email{gutierre@temple.edu}
\address{Instituto Argentino de Matem\'atica A. P. Calder\'on, CONICET, Buenos Aires, Argentina}

\email{f-tournier@hotmail.com}
%\author{Cristian E. Guti\'errez and Federico Tournier}
%\thanks

\begin{abstract} 
We establish local $C^{1,\alpha}$ estimates for one source near field refractors under structural assumptions on the target, and with no assumptions on the smoothness of the densities.
\end{abstract}
\maketitle
\setcounter{equation}{0}

\tableofcontents

\section{Introduction}
The main purpose in this paper is to prove H\"older estimates for gradients of weak solutions to the near field refractor problem introduced in \cite{gutierrez-huang:nearfieldrefractor}, where existence of weak solutions is proved as a consequence of a general abstract method applicable also in other situations.
The set up for the problem is as follows.
Suppose we have a domain $\Omega\subset S^{n-1}$ and 
a domain $\Sigma$ contained in an $n$ dimensional surface in $\R^n$; here,
$\Omega$ denotes the set of incident directions, and $\Sigma$ denotes the target domain, receiver, or screen to be illuminated.
%(for the practical applications,
%one can think that $n=3$).
Let $n_1$ and $n_2$ be the indices of refraction
of two homogeneous and isotropic media I and II, respectively.
From a point source at the origin, surrounded by medium I,
radiation emanates in each direction $x$ with intensity $f(x)$
for $x\in \Omega$, and the target $\Sigma$ is surrounded by medium II.
A {\it near field refractor} is an optical surface
$\mathcal R$, 
interface between media I and II,
such that all rays refracted by $\mathcal R$ into medium II, in accordance with the Snell law,
are received at the surface $\Sigma$ with prescribed radiation intensity distribution given by a measure $\nu$.
%Of course some conditions on the relative position of $\Sigma$ and and $\Omega$ are needed so rays can be refracted to $\Sigma$,
%see conditions (A) and (B) below.
Assuming no loss of energy in this process, we have the conservation of energy equation
$\int_{\Omega}f(x)\,dx=\nu(\Sigma)$.
Under visibility assumptions on the target and conditions to avoid total reflection, existence of solutions to this problem is proved in \cite{gutierrez-huang:nearfieldrefractor}.

The problem solved in the present paper is that weak solutions are $C^1$ and their gradients are locally H\"older continuous under no smoothness assumptions on the density $f$ and the measure $\nu$. In fact, we prove a more general result, Theorem \ref{thm:c1alpha estimates for general refractors}, valid for more general near field refractors in the sense of Definition \ref{def:definition of refractor}.
Our assumptions are of structural nature, that is, they depend on the relative location of the target, its visibility from the cone of incident directions, and its convexity; see Section \ref{sec:structural assumptions on the target}.
A major difficulty with the near field refractor problem is that solutions have a complicated structure  given by Descartes ovals that often require difficult analytical estimates, and it does not have an optimal mass transport structure. 

To place our results in perspective we mention that regularity results for one source far field reflectors are in 
\cite{caffarelli-gutierrez-huang:antennaannals}, results for near field parallel refractors are in \cite{gutierrez-tournier:REGULARITYFORTHENEARFIELDPARALLELREFRACTORANDREFLECTORPROBLEMS} and \cite{abedin-gutierrez-tralli:parallelrefractor}, and results for generated Jacobian equations, including reflector problems,  are in  
\cite{2017-guillen-kitagawa:generatedjacobianeqs}.
Numerical methods are developed in \cite{deleo-gutierrez-mawi:numericalrefractor} to solve the one source far field refractor problem, in \cite{2019-gutierrez-mawi:numericalsolutionnearfield} to solve the near field, and in \cite{abedin-gutierrez:numericalgeneratedjacobians} to solve generated Jacobian equations.

The organization of the paper is as follows. Section \ref{sec:preliminaries structure and example} contains structural conditions on the target as well as a discussion on them and an example.
Analytical estimates for ovals and a maximum principle, Lemma \ref{crucial}, of the type developed in \cite{loeper:actapaper} and \cite{2010-kim-mccann:DMASM} are contained in Section \ref{sec:preliminary for ovals and max principle}.
More analytical estimates for derivatives of ovals are in Section \ref{sec:estimates for derivatives of ovals}.
Section \ref{sec:C 1 alpha estimates for near field refractors} contains the H\"older estimates,  where the main result is Theorem \ref{thm:main} from which we deduce as consequences Theorems \ref{thm:c1alpha estimates for general refractors} and  \ref{thm:regularity of Brenier solutions}.

\section{Preliminaries, structural assumptions, and examples}\label{sec:preliminaries structure and example}

\setcounter{equation}{0}

Recall that a Descartes oval is the set $\mathcal O(Y,b)=\{X\in \R^{n}:|X|+\kappa|X-Y|=b\}$, with $\kappa|Y|<b<|Y|$. Here $\kappa=n_2/n_1$, where $n_1$ is the refractive index of the material inside the oval and $n_2$ is the refractive index of the material outside. We assume throughout that $\kappa<1$, which is the most interesting from an optical point of view (when $\kappa>1$ the arguments are similar).
From the Snell law, a ray emanating from the origin with unit direction $x$ is refracted at the point $X\in \mathcal  O(Y,b)$ into the point $Y$ provided that 
\begin{equation}\label{refraction}
\dfrac{X}{|X|}\cdot \dfrac{Y-X}{|Y-X|}\geq\kappa;
\end{equation}
an inequality that by the equation of the oval is equivalent to $x\cdot Y\geq b$.
The polar equation of the oval is $\mathcal O(Y,b)=\{\rho(x,Y,b)x:x\in S^{n-1}\}$ where 
\begin{equation}\label{eq:polar radius of the oval}
\rho(x,Y,b)=\dfrac{b-\kappa^2\,x\cdot Y-\sqrt{(b-\kappa^2\, x\cdot Y)^{2}-(1-\kappa^{2})(b^{2}-\kappa^{2}|Y|^{2})}}{1-\kappa^{2}}.
\end{equation}
For a geometric analysis and estimates for Descartes ovals we refer to \cite[Sec. 4]{gutierrez-huang:nearfieldrefractor}. 
If we specify a point $X_0$ on the oval $\mathcal O(Y,b)$, then $b=|X_0|+\kappa|X_0-Y|$ and it will be useful to introduce the function 
\begin{equation}\label{eq:definition of function h}
h(x,Y,X_0)=\rho(x,Y,b),
\end{equation}
with the point $X_0$ so that $\dfrac{X_0}{|X_0|}\cdot \dfrac{Y-X_0}{|Y-X_0|}\geq\kappa$.
For $\Omega\subseteq S^{n-1}$ open and constants $0<c_1<c_2$, we let
\[
\Gamma_{c_1c_2}=\left\{rx:x\in\Omega,\,c_1\leq r\leq c_2\right\}.
\]

\subsection{Structural assumptions on the target $\Sigma$}\label{sec:structural assumptions on the target}
We begin introducing the following notion of curve in $S^{n-1}$ that will be used to state our assumptions.

Let $x_0,\hat m,\bar m\in S^{n-1}$ with 
$\bar m\cdot x_0\geq\kappa$ and $\hat m\cdot x_0\geq\kappa$.
By definition, $[\bar m,\hat m]_{x_0}$ denotes the curve obtained intersecting the triangle with vertices $\bar m$, $\hat m$, and $x_0/\kappa$ with the sphere $S^{n-1}$. Notice that since $\kappa<1$, the point $x_0/\kappa$ is outside the unit ball.
In this triangle, the side joining $\hat m$ and $\bar m$, is given by $m_{\lambda}=(1-\lambda)\bar m+\lambda\hat m$, with $0\leq \lambda\leq 1$.
Each point $m\in [\bar m,\hat m]_{x_0}$ can then be obtained intersecting the line $\dfrac{x_0}{\kappa} +\beta\,\xi$ with the sphere $S^{n-1}$, where 
$\xi=m_{\lambda}-\dfrac{1}{\kappa}x_0$, $\beta\in \R$.
Solving for $\beta$ yields 
\begin{equation}\label{eq:definition of beta of lambda}
\beta(\lambda)=\dfrac{-x_0\cdot \xi -\sqrt{(x_0\cdot \xi)^2-\(1-\kappa^{2}\)|\xi|^{2}}}{\kappa|\xi|^{2}},
\end{equation} 
since the point $\dfrac{x_0}{\kappa} +\beta\,\xi$ is inside the triangle so $0<\beta <1$.
%\footnote{Since the point $\dfrac{x_0}{\kappa} +\beta\,\xi$ is inside the triangle we must have $0<\beta <1$.}
Therefore, we obtain the parametrization
\begin{equation}\label{eq:parametrization of the x0 segment}
[\bar m,\hat m]_{x_0}=\left\{m(\lambda)=\frac{1}{\kappa}x_0+\beta(\lambda)\(m_{\lambda}-\frac{1}{\kappa}x_0\),\ \lambda\in [0,1]\right\}.
\end{equation}
In particular, for $m\in [\bar m,\hat m]_{x_0}$ we can write 
\begin{equation}\label{eq:writing of m in the curve hatm bar m}
m=\frac{1}{\kappa}x_0+\bar \beta\,\(\bar m-\frac{1}{\kappa}x_0\)+\hat \beta\,\(\hat m-\frac{1}{\kappa}x_0\)
\end{equation}
with $\bar \beta,\hat \beta\geq 0$ and $\bar \beta+\hat \beta\leq 1$; $\bar \beta=(1-\lambda)\beta(\lambda)$, $\hat \beta=\lambda\,\beta(\lambda)$.
Notice that $m(\lambda)\cdot x_0\geq \kappa$ for $0\leq \lambda\leq 1$ since $\beta(\lambda)\leq 1$ and $\kappa<1$.

We next introduce our structural assumptions.

\begin{enumerate}[label=\textbf{H.\Alph*}]
\item For each $X\in\Gamma_{c_1c_2}$, let $C_{X}=\left\{Y:\dfrac{X}{|X|}\cdot \dfrac{Y-X}{|Y-X|}\geq\kappa\right\}$ be the cone with vertex $X$, axis $X/|X|$, and opening $\arccos \kappa$. Set 
\[
\mathcal C_{\Omega}=\bigcap_{X\in\Gamma_{c_1c_2}} C_{X}. 
\]
We assume the following:
\begin{enumerate}
\item[(a)] $\Sigma\subset \mathcal C_{\Omega}$, so \eqref{refraction} holds
for all $Y\in\Sigma$ and $X\in\Gamma_{c_1c_2}$;
\item[(b)] For each $X\in \Gamma_{c_1c_2}$ there exists a set $E(X)\subset \{m\in S^{n-1}:m\cdot x\geq \kappa,x=X/|X|\}$ and a continuous function 
$s_{X}:E(X)\to \R^+$ such that 
\[
\Sigma=\{X+s_{X}(m)\,m:m\in E(X)\},
\]
with the set $E(X)$ satisfying $[\bar m,\hat m]_x\subset E(X)$ for all $\bar m,\hat m\in E(X)$, with $x=X/|X|$;
\item[(c)] The family of functions $\{s_X\}_{X\in \Gamma_{c_1c_2}}$ is uniformly Lipschitz continuous, i.e.,
there exists a constant $C>0$ such that $|s_X(m_1)-s_X(m_2)|\leq C\,|m_1-m_2|$ for all $m_1,m_2\in E(X)$ and $X\in \Gamma_{c_1c_2}$. 
\end{enumerate}
\label{item:hypotheses A}
\item Let $C(\kappa)=\kappa\(\sqrt{1+(1+\kappa)^{-2}}-1\)$.
%\footnote{I get the constant $C(\kappa)=\kappa\(\sqrt{1+\dfrac{1}{(1+\kappa)^2}}-1\)$.}
We assume 
$
\dfrac{|X|}{|Y-X|}\leq C(\kappa)$ for all $X\in\Gamma_{c_1c_2}$ and $Y\in\Sigma$.
Notice that this holds if  
$
\dist(\Gamma,\Sigma)\geq c_2/C(\kappa).
$
\label{item:old hypotheses C} 
\item 
There exists a constant $0\leq \mu< \kappa$ such that for all $X_0\in \Gamma_{c_1c_2}$ and $\bar m,\hat m\in E(X_0)$, the function $s_{X_0}$ satisfies the following concavity condition 
\begin{equation*}\label{HC}
\dfrac{1}{s_{X_0}(m(\lambda))}+\dfrac{\mu}{|X_0|}
\geq
\bar\beta(\lambda)\,
\(\dfrac{1}{s_{X_0}(\bar m)}+\dfrac{\mu}{|X_0|}\)
+
\hat\beta(\lambda)\,
\(\dfrac{1}{s_{X_0}(\hat m)}+\dfrac{\mu}{|X_0|}\)
\end{equation*}
for $0\leq \lambda\leq 1$, 
with $\bar\beta(\lambda)=(1-\lambda)\beta(\lambda)$ and $\hat\beta(\lambda)=\lambda\beta(\lambda)$, $\beta(\lambda)$ defined in \eqref{eq:definition of beta of lambda} (depending on $x_0$), and $m(\lambda)$ from \eqref{eq:parametrization of the x0 segment}.
\label{item:old hypotheses D}
\item 
Given $X_0\in\Gamma_{c_1c_2}$, $\bar Y ,\hat Y\in\Sigma$,  
let $\bar m=\dfrac{\bar Y-X_0}{|\bar Y-X_0|}$ and $\hat m=\dfrac{\hat Y-X_0}{|\hat Y-X_0|}$; $x_0=X_0/|X_0|$.
%From \ref{item:hypotheses A} (b), for each $X_0\in \Gamma_{c_1c_2}$, the curve $[\bar m,\hat m]_{x_0}\subset E(X_0)$ (see also Remark below {\color{blue} THIS REMARK SEEMS LAME}), so
Let $[\bar Y,\hat Y]_{X_0}$ be the curve defined by
\[
[\bar Y,\hat Y]_{X_0}=\left\{Y(\lambda)=X_0+s_{X_0}(m(\lambda))\,m(\lambda):\lambda\in[0,1]\right\},
\]
where $m(\lambda)$ is the parametrization of $[\bar m,\hat m]_{x_0}$ defined in \eqref{eq:parametrization of the x0 segment}. 
We assume that there exist positive constants $\mu_0$ and $C$ such that for all $X_0\in\Gamma_{c_1c_2}$, $\bar Y ,\hat Y\in\Sigma$, we have 
\[
H^{n-1}\(N_{\mu}\(\left\{[\bar Y,\hat Y]_{X_0}:\frac{1}{4}\leq\lambda\leq\frac{3}{4}\right\}\)\cap\Sigma\)\geq C\,\mu^{n-2}|\bar Y-\hat Y|,
\]
for each $\mu\leq \mu_0$, 
where $H^{n-1}$ denotes the $n-1$ dimensional Hausdorff measure in $\R^{n}$ and $N_{\mu}$ denotes the $\mu$-neighborhood in $\R^{n}$. 
\label{item:old hypotheses E}
\end{enumerate}

{\it Throughout the paper, a structural constant refers to a constant depending only on some or all of the constants in the structural conditions above.}

\begin{remark}\label{rmk:remark on structural conditions}\rm
We begin noticing that from \ref{item:hypotheses A} and \ref{item:old hypotheses C}  we get that $s_X$ is bounded below: 
\begin{equation}\label{eqlower bound for s_X}
s_X(m)\geq c_1/C(\kappa),
\end{equation}
for all $X\in \Gamma_{c_1c_2}$ and $m\in E(X)$.

Also from \ref{item:hypotheses A} and \ref{item:old hypotheses C} we get for $\hat Y=X+s_X(\hat m)\,\hat m$ and $\bar Y=X+s_X(\bar m)\,\bar m$ that
\begin{equation}\label{eq:difference of ms bounded by difference of Ys}
|\hat m-\bar m|\leq 2\min\left\{\frac{1}{|\bar Y-X|},\frac{1}{|\hat Y-X|}\right\}|\bar Y-\hat Y|\leq C|\bar Y-\hat Y|.
\end{equation}
Indeed, from \ref{item:hypotheses A}
\[
\hat m-\bar m=\dfrac{\hat Y-X}{|\hat Y-X|}-\dfrac{\bar Y-X}{|\bar Y-X|}
%=\dfrac{|\bar Y-X|(\hat Y-X)-|\hat Y-X|(\bar Y-X)}{|\hat Y-X||\bar Y-X|}=
=
\dfrac{|\bar Y-X|(\hat Y-\bar Y)+(\bar Y-X)(|\bar Y-X|-|\hat Y-X|)}{|\hat Y-X||\bar Y-X|},
\]
so
\[
|\bar m-\hat m|\leq \dfrac{2|\hat Y-\bar Y|}{|\hat Y-X|},\qquad 
|\bar m-\hat m|\leq \dfrac{2|\hat Y-\bar Y|}{|\bar Y-X|}.
\]
Therefore from \ref{item:old hypotheses C} the desired inequality follows since $X\in \Gamma_{c_1c_2}$.

Concerning each of our assumptions we mention the following.
Assumption \ref{item:hypotheses A} guarantees that each ray from $0$ striking $X\in\Gamma_{c_1c_2}$ can be refracted into $\Sigma$ and the refracted ray intersects $\Sigma$ at only one point.
Assumption \ref{item:old hypotheses C} says that $\Gamma_{c_1c_2}$ is sufficiently far from the target $\Sigma$\footnote{The value of the constant $C(\kappa)$ in \ref{item:old hypotheses C} is only needed in Lemma \ref{aux1}.}and it will be applied to show that the ovals used in the definition of  refractor have controlled derivatives.
Assumption \ref{item:old hypotheses D} is crucial to obtain regularity of refractors and is akin to the condition (AW) first introduced in \cite{MaTrudingerWang:regularityofpotentials} and later considered in \cite{loeper:actapaper} and \cite{2010-kim-mccann:DMASM}.
Assumption \ref{item:old hypotheses E} is a form of convexity of $\Sigma$ with respect to points $X\in\Gamma_{c_1c_2}$.

\end{remark}

\begin{remark}\rm
We relate now the structural assumptions introduced with the following assumptions needed to prove existence of refractors \cite[Sect. 5]{gutierrez-huang:nearfieldrefractor}:
\begin{enumerate}[label=\textbf{H.\arabic*}]
\item there exists $\tau$, with $0<\tau<1-\kappa$, such that $x\cdot Y\geq (\kappa+\tau)|Y|$ for all $x\in \Omega$ and $Y\in \Sigma$;\label{eq:hypotheses 1}
\item if $0<r_0<\dfrac{\tau}{1+\kappa}\,\dist(0,\Sigma)$ and $Q_{r_0}=\{t\,x:x\in \Omega, 0<t<r_0\}$, then given $X\in Q_{r_0}$ each ray emanating from $X$ intersects $\Sigma$ in at most one point. \label{eq:hypotheses 2} 
\end{enumerate}
We show that if $\tau$ is sufficiently small, then \ref{eq:hypotheses 1} and \ref{eq:hypotheses 2} 
imply \ref{item:hypotheses A} (a) and \ref{item:old hypotheses C}.
We first claim that 
there are positive constants $C_{\tau,\kappa}$ and $\hat C_{\tau,\kappa}$ such that if
$
\dfrac{X}{|X|}\cdot \dfrac{Y}{|Y|}\geq \kappa +\tau, 
$
and
$
|Y|\geq C_{\tau,\kappa}|X|,
$
for all $Y\in \Sigma$ and $X\in \Gamma_{c_1c_2}$,
then 
\begin{equation*}
\dfrac{Y-X}{|Y-X|}\cdot \dfrac{X}{|X|}\geq\kappa,
\text{ and }
\dfrac{|X|}{|Y-X|}\leq \hat C_{\tau,\kappa},
\end{equation*}
with 
\begin{equation*}
C_{\tau,\kappa}=\dfrac{\sqrt{1-\kappa^{2}}}{(\kappa+\tau)\sqrt{1-\kappa^{2}}-\kappa\sqrt{1-(\kappa+\tau)^{2}}},\quad 
\hat C_{\tau,\kappa}=\dfrac{1}{C_{\tau,\kappa}-1}.
\end{equation*}
Then the desired relation between the assumptions follows noticing that $C_{\tau,\kappa}\rightarrow\infty$ and $\hat C_{\tau,\kappa}\rightarrow 0$ as $\tau\rightarrow 0$.
To prove the claim, fix $X$ and calculate the intersection between the cones $\mathcal C_1=\left\{Y:\dfrac{X}{|X|}\cdot \dfrac{Y}{|Y|}= \kappa +\tau\right\}$, and $\mathcal C_2=\left\{Y:\dfrac{Y-X}{|Y-X|}\cdot \dfrac{X}{|X|}=\kappa\right\}$.
From the sine law, it is easy to see that if $Y$ is in the intersection of these cones, then $|Y|=C_{\tau,\kappa}|X|$. So $|Y|\geq C_{\tau,\kappa}|X|$  and $Y$ is in the interior of $\mathcal C_1$, then $Y$ is in the interior of $\mathcal C_2$,
and $|Y-X|\geq |Y|-|X|\geq \(C_{\tau,\kappa}-1\)|X|$.

\end{remark}

\begin{remark}\rm
When the target $\Sigma$ is $C^2$ one can give a differential condition that is equivalent to \ref{item:old hypotheses D}. 
To do this, we first need to have another parametrization of the curve $[\bar m,\hat m]_{x_0}$.
For $Y\in \Sigma$, recall that from \eqref{eq:tangential gradient}
\[
\nabla^{T}h(x,Y,X_0)=\nabla_x h(x,Y,X_0)-\langle\nabla_x h(x,Y,X_0),x\rangle x,
\]
and since $Y=X_0+s\,m$, for some $m\in E(X_0)$, we have from \eqref{eq:partial h with respect to xi at x0} that
\[
\nabla^{T}h(x_0,Y,X_0)=\kappa|X_0|\,\dfrac{m-\langle m,x_0\rangle x_0}{1-\kappa\langle m,x_0\rangle}
:=v=T_{X_0}(m),\qquad x_0=X_0/|X_0|.
\]
Notice that $v\perp x_0$ and $|v|^{2}\leq \dfrac{\kappa^{2}|X_0|^{2}}{1-\kappa^{2}}$. 
We will write $m$ in terms of $v$, with $\langle m,x_0\rangle\geq\kappa$ and $|m|=1$.
First note that $\langle m,x_0\rangle=\dfrac{|v|^{2}+|X_0|\sqrt{\kappa^{2}|X_0|^{2}-(1-\kappa^{2})|v|^{2}}}{\kappa\(|v|^{2}+|X_0|^{2}\)}$, and thus 
\[
\dfrac{1-\kappa\langle m,x_0\rangle}{\kappa|X_0|}=\dfrac{1-\kappa^{2}}{\kappa}\dfrac{1}{|X_0|+\sqrt{\kappa^{2}|X_0|^{2}-(1-\kappa^{2})|v|^{2}}}:=t(v).
\]
We can then write $m=\langle m,x_0\rangle x_0+t(v)\,v$ and so
\[
m=m(v):=\frac{1}{\kappa}x_0+t(v)(v-X_0).
\]
Given $\bar m,\hat m\in S^{n-1}$ with $\langle\bar m,x_0\rangle\geq\kappa$ and $\langle\hat m,x_0\rangle\geq\kappa$, let
\[
\bar v=\kappa|X_0|\dfrac{\bar m-\langle \bar m,x_0\rangle x_0}{1-\kappa\langle \bar m,x_0\rangle},\qquad 
\text{ and }
\hat v=\kappa|X_0|\dfrac{\hat m-\langle \hat m,x_0\rangle x_0}{1-\kappa\langle \hat m,x_0\rangle}.
\]
Letting $v_{\gamma}=(1-\gamma)\bar v+\gamma\hat v$, we show that the curve $[\bar m,\hat m]_{x_0}$ in \eqref{eq:parametrization of the x0 segment} can be parametrized as follows:
\[
\tilde m(\gamma)=m(v_\gamma)=\frac{1}{\kappa}x_0+t(v_{\gamma})(v_{\gamma}-X_0), \qquad 0<\gamma<1,
\]
that is, $\tilde m(\gamma)=m(\lambda)$ with the change of parameter $\lambda=\dfrac{\gamma t(\bar v)}{(1-\gamma)t(\hat v)+\gamma t(\bar v)}$ (we are abusing the notation $m(\lambda)$ and $m(v)$). 
In fact, from the definition of $\beta(\lambda)$
\begin{equation*}
\bar\beta=(1-\lambda)\beta(\lambda)=\dfrac{t(v_{\gamma})(1-\gamma)}{t(\bar v)}
\qquad 
\text{and}\qquad  
\hat\beta=\lambda\beta(\lambda)=\dfrac{t(v_{\gamma})\gamma}{t(\hat v)};
\end{equation*}
see the end of the proof of Lemma \ref{crucial} for similar calculations with $\beta(\lambda)$.
Also $\bar m=\frac{1}{\kappa}x_0+t(\bar v)(\bar v-X_0)$ and $\hat m=\frac{1}{\kappa}x_0+t(\hat v)(\hat v-X_0)$.
Then,
\begin{align*}
m(\lambda)&=\frac{1}{\kappa}x_0+\beta(\lambda)\((1-\lambda)\bar m+\lambda\hat m-\frac{1}{\kappa}x_0\)\\
&=
\frac{1}{\kappa}x_0+\frac{1}{\kappa}t(v_{\gamma})\dfrac{(1-\gamma)t(\hat v)+\gamma t(\bar v)}{t(\hat v)t(\bar v)}\((1-\lambda)(\kappa\bar m-x_0)+\lambda(\kappa\hat m-x_0)\).
\end{align*}
Since $\kappa\bar m-x_0=\kappa t(\bar v)(\bar v-X_0)$ and $\kappa\hat m-x_0=\kappa t(\hat v)(\hat v-X_0)$, substituting and simplifying yields $m(\lambda)=\tilde m(\gamma)$ as desired.

Therefore, with this reparametrization of the curve $[\bar m, \hat m]_{x_0}$ assumption \ref{item:old hypotheses D} is then equivalent to
\[
\(\dfrac{1}{s_{X_0}(\tilde m(\gamma))}+\dfrac{\mu}{|X_0|}\)\dfrac{1}{t(v_\gamma)}
\geq 
(1-\gamma)\,\(\dfrac{1}{s_{X_0}(\tilde m(0))}+\dfrac{\mu}{|X_0|}\)\dfrac{1}{t(v_0)}
+\gamma\,\(\dfrac{1}{s_{X_0}(\tilde m(1))}+\dfrac{\mu}{|X_0|}\)\dfrac{1}{t(v_1)}
,
\]
for $0<\gamma<1$, with $\tilde m(0)=\bar m$, $\tilde m(1)=\hat m$, $v_0=\bar v$, and $v_1=\hat v$.
In other words, the function 
\[
\Phi(v)=\(\dfrac{1}{s_{X_0}(m(v))}+\dfrac{\mu}{|X_0|}\)\dfrac{1}{t(v)}
\]
is a concave function of $v$ for $ |v|^{2}\leq \dfrac{\kappa^{2}|X_0|^{2}}{1-\kappa^{2}}$ and $v\perp x_0$, i.e., 
concave in a $n-1$-dimensional disk.
Thus, when $\Sigma$ is $C^2$, we obtain that \ref{item:old hypotheses D} is equivalent to
\[
\left.\dfrac{d^{2}}{dt^{2}}\(\Phi(v+t\xi)\)\right|_{t=0}\leq 0
\]
for all $v\perp x_0$ with $ |v|^{2}\leq \dfrac{\kappa^{2}|X_0|^{2}}{1-\kappa^{2}}$ and for all $\xi\perp x_0$.
The domain of $s_{X_0}$ is $E(X_0)$, and the domain of $\Phi(v)$ is $T_{X_0}(E(X_0))$.
The fact that $E(X_0)$ satisfies the convexity assumption that $[\bar m,\hat m]_{x_0}\subset E(X_0)$ for all $\bar m,\hat m\in E(X_0)$ is equivalent that $T_{X_0}(E(X_0))$ is a convex set in the classical sense on the hyperplane perpendicular to $x_0$.

\end{remark}

\subsection{Examples}

We will construct $\Omega\subset S^{n-1}$ and a target $\Sigma$ so that the structural assumptions are satisfied.
Notice that if $\Omega\subset\Omega^{\prime}$, then  $\mathcal C_{\Omega^{\prime}}\subset \mathcal C_{\Omega}$, with $c_1,c_2$ fixed.
To do this construction, we will first choose $\Omega^{\prime}$ and calculate $\mathcal C_{\Omega^{\prime}}$. 
We will then choose a target $\Sigma\subset \mathcal C_{\Omega^{\prime}}$ and next pick $\Omega\subset\Omega^{\prime}$. It will then follow that $\Sigma\subset \mathcal C_{\Omega}$.

Let $\theta=\arccos \kappa $ and $\Omega^{\prime}=\left\{x\in S^{n-1}:x\cdot e_n\geq \cos(\theta/2)\right\}$ where $e_n$ is the unit vector in the vertical direction $x_n$. 

Pick constants $c_1=1$ and $c_2>1$, and let $Y_0=2\,c_2\cos(\theta/2)\,e_n$. 
We claim that  $\mathcal C_{\Omega^{\prime}}=\left\{Y:\dfrac{Y-Y_0}{|Y-Y_0|}\cdot e_n\geq \cos(\theta/2)\right\}:=E$, the cone with vertex at $Y_0$ direction $e_n$ and opening $\theta/2$.
To prove this, let $Y\in E$ and we want to show that $Y\in C_X$ for all $X\in \Gamma_{c_1c_2}$ (defined with $\Omega^{\prime}$).
That $Y\in E$ means $\angle\(Y-Y_0,e_n\)\leq \theta/2$, where $\angle$ denotes the angle between the vectors. 
Obviously, $Y\in  C_{X}$ if and only if $\angle (Y-X,X)\leq\theta$. From the choice of $Y_0$, it is easy to see that $\angle (Y_0-X,X)\leq\theta $ for all $X\in \Gamma_{c_1c_2}$, i.e., $Y_0\in C_X$.

We have $Y=Y_0+v$ with $\angle(v,e_n)\leq \theta/2$. Let $\bar Y=X+v$. Since $\angle(\bar Y-X,X)=\angle(v,X)\leq\angle(v,e_n)+\angle(e_n,X)\leq\frac{\theta}{2}+\frac{\theta}{2}$, it follows that $\bar Y\in C_{X}$. 
Then from the convexity of $C_{X}$ we obtain $\dfrac{Y_0+\bar Y}{2}\in C_{X}$. 
Since $Y=\bar Y+Y_0-X$, it follows that $\angle (Y-X,X)=\angle (\bar Y+Y_0-2X,X)=
\angle\(\dfrac{\bar Y+Y_0}{2}-X,X\)\leq \theta$. So $Y\in C_X$ and the claim is proved.

Now, we choose $\Sigma$ the planar disk centered at $0$ with radius $R$ at height $M$, that is,
 \[
 \Sigma=\left\{Y=(Y^{\prime},Y_n):|Y^{\prime}|\leq R\;Y_n=M\right\}.
 \]
If we pick 
$M=C+2\,c_2\,\cos(\theta/2)=C+2\,c_2\,\sqrt{\frac{1+\kappa}{2}}$ with $C$ any positive constant and pick $R\leq \sqrt{\dfrac{1-\kappa}{1+\kappa}}C$, then
it is easy to verify that $\Sigma\subset E=\mathcal C_{\Omega^{\prime}}$.

Next, we will choose $\Omega\subset \Omega^{\prime}$ so that if $\bar Y,\hat Y\in\Sigma$, then $[\bar m,\hat m]_{x}\subseteq E(X)$, for all $X\in \Gamma_{c_1c_2}$, where $\bar m=\dfrac{\bar Y-X}{|\bar Y-X|}$,
$\hat m=\dfrac{\hat Y-X}{|\hat Y-X|}$ and $E(X)$ is the set of visibility directions in $S^{n-1}$ of the target $\Sigma$ from the point $X$.
First notice that since $\cos(\theta/2)=\sqrt{\dfrac{1+\kappa}{2}}$ and $\sin(\theta/2)=\sqrt{\dfrac{1-\kappa}{2}}$, we have $x\in\Omega^{\prime}$ if and only if $\dfrac{|x^{\prime}|}{x_n}\leq \sqrt{\dfrac{1-\kappa}{1+\kappa}}$, with $x_n>0$.
Now define 
\[
\Omega=\left\{x\in S^{n-1}:x_n>0,\,\dfrac{|x^{\prime}|}{x_n}\leq \dfrac{R}{M}\right\}.
\] 
Since $\dfrac{R}{M}\leq \dfrac{\sqrt{\dfrac{1-\kappa}{1+\kappa}}C}{C+2c_2\sqrt{\dfrac{1-\kappa}{1+\kappa}}}$, we obtain that $\Omega\subset \Omega^{\prime}$.

If $X\in\Gamma_{c_1c_2}$ and $\bar Y,\hat Y\in\Sigma$, then we show $[\bar m,\hat m]_{x}\subseteq E(X)$,
where $\bar m=\dfrac{\bar Y-X}{|\bar Y-X|}$ and $\hat m=\dfrac{\hat Y-x}{|\hat Y-X|}$ and $x=\dfrac{X}{|X|}$. Since $\bar Y,\hat Y\in \mathcal C_{\Omega}$, we have $\bar m\cdot x\geq\kappa$ and $\hat m\cdot x\geq\kappa$.
We have from \eqref{eq:writing of m in the curve hatm bar m} that $m=\frac{1}{\kappa}x+\bar\beta(\bar m-\frac{1}{\kappa}x)+\hat\beta(\hat m-\frac{1}{\kappa}x)$ for $m\in [\bar m,\hat m]_{x}$, and we need to show that the ray $X+s\,m$ strikes $\Sigma$ for some $s$ (that is, $s=s_X(m)$).
If $s=\dfrac{M-X_n}{m_n}$, then will show that $Y=X+\dfrac{M-X_n}{m_n}m\in\Sigma$.
Indeed, write $Y=(Y',Y_n)$. Clearly $Y_n=M$.
If $|\bar Y^{\prime}|,|\hat Y^{\prime}|\leq R$, will prove that $|Y^{\prime}|\leq R$.
We have with $m=(m',m_n)$ that
\[
Y^{\prime}=X^{\prime}+(M-X_n)\dfrac{m^{\prime}}{m_n}=X^{\prime}+(M-X_n)\dfrac{\frac{1}{\kappa}x^{\prime}\(1-\bar\beta-\hat\beta\)+\bar\beta\bar m^{\prime}+\hat\beta\hat m^{\prime}}{m_n},
\]
and 
\begin{align*}
Y^{\prime}&=\dfrac{\bar\beta \bar m_n}{m_n}\(X^{\prime}+\(M-X_n\)\dfrac{\bar m^{\prime}}{\bar m_n}\)+
\dfrac{\hat\beta \hat m_n}{m_n}\(X^{\prime}+\(M-X_n\)\dfrac{\hat m^{\prime}}{\hat m_n}\)\\
&\qquad +
\(1-\dfrac{\bar\beta\bar m_n+\hat\beta\hat m_n}{m_n}\)X^{\prime} +\dfrac{M-X_n}{m_n}\(\frac{1}{\kappa}x^{\prime}\(1-\bar\beta-\hat\beta\)\).
\end{align*}
Combining the last two terms and simplifying yields
\[
Y^{\prime}
=\dfrac{\bar\beta \bar m_n}{m_n}\bar Y^{\prime}
+
\dfrac{\hat\beta \hat m_n}{m_n}\hat Y^{\prime}+\frac{1}{\kappa}x^{\prime}\dfrac{1-\bar\beta-\hat\beta}{m_n}M.
\]
Therefore, 
$|Y^{\prime}|\leq R\dfrac{\bar\beta\bar m_n+\hat\beta\hat m_n}{m_n}+\dfrac{1}{\kappa}|x^{\prime}|\,
\dfrac{1-\bar\beta-\hat\beta}{m_n}\,M\leq R$, 
where we have used that $|x^{\prime}|\leq \dfrac{x_n R}{M}$ since $x\in \Omega$.
Thus, $[\bar m,\hat m]_x\subseteq E(X)$.

In addition,  
\begin{align*}
\dfrac{1}{s}=\dfrac{m_n}{M-X_n}
&=\dfrac{\frac{1}{\kappa}x_n(1-(\bar\beta+\hat\beta))+\bar\beta\bar m_n+\hat\beta\hat m_n}{M-X_n}\\
&\geq
\dfrac{\bar\beta\bar m_n}{M-X_n}+\dfrac{\hat\beta\hat m_n}{M-X_n}=\dfrac{\bar\beta}{\bar s}+\dfrac{\hat\beta}{\hat s}.
\end{align*}
and so the concavity assumption in \ref{item:old hypotheses D} holds with $\mu=0$.

Therefore the example described satisfies the assumptions \ref{item:hypotheses A}, and \ref{item:old hypotheses D}. In order to satisfy \ref{item:old hypotheses C}, it is enough to keep $c_2$ fixed and pick $C$ large enough.
It remains to verify that example satisfies \ref{item:old hypotheses E}.
For this we use the following lemma.

\begin{lemma}
Let $\gamma:[a,b]\rightarrow R^{n}$ be a smooth curve such that $|\gamma^{\prime}(t)|=1$ and $|\gamma^{\prime\prime}(t)|\leq M_1$ for all $t\in [a,b]$. In addition, assume $M_2|t_1-t_2|\leq|\gamma(t_1)-\gamma(t_2)|$ for all $t_1,t_2\in[a,b]$. Let $T_t$ denote the hyperplane passing through $\gamma(t)$ with normal $\gamma^{\prime}(t)$ and let $D_{\mu}(t)=B_\mu(\gamma(t))\cap T_t$, and 
$N_{\mu}=\bigcup_{t\in [a,b]}D_{\mu}(t)$.
Then, there exists $\mu_0$  and $C$ depending only on $M_1,M_2$ such that for $\mu\leq\mu_0$, we have $H^{n}(N_{\mu})\geq C\mu^{n-1}|\gamma(b)-\gamma(a)|$.
\end{lemma}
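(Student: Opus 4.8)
The plan is to show that the map $\Psi:(t,\eta)\mapsto \gamma(t)+\eta$, defined for $t\in[a,b]$ and $\eta\in T_t$ with $|\eta|\le\mu$, covers $N_\mu$ and is injective on a slightly smaller set, so that a change-of-variables (area formula) lower bound gives the estimate. First I would fix a length scale. By the chord–arc hypothesis $M_2|t_1-t_2|\le|\gamma(t_1)-\gamma(t_2)|\le|t_1-t_2|$, and by the curvature bound $|\gamma'(t_1)-\gamma'(t_2)|\le M_1|t_1-t_2|$; hence if $|t_1-t_2|\le\delta_0$ for a suitable $\delta_0=\delta_0(M_1,M_2)$, the tangent directions $\gamma'(t_1),\gamma'(t_2)$ are within, say, $1/4$ of each other. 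I would then choose $\mu_0\le \tfrac{1}{4}M_2\delta_0$ (and $\le 1$) so that whenever a point lies in both $D_\mu(t_1)$ and $D_\mu(t_2)$ with $\mu\le\mu_0$, one has $|\gamma(t_1)-\gamma(t_2)|\le 2\mu\le \tfrac12 M_2\delta_0$, forcing $|t_1-t_2|\le\delta_0$; this keeps everything local.

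Next I would establish \emph{injectivity} of $\Psi$ on $\{(t,\eta):\eta\in T_t,\ |\eta|<c\,\mu\}$ for a dimensional-and-structural constant $c>0$. Suppose $\gamma(t_1)+\eta_1=\gamma(t_2)+\eta_2$ with $\eta_i\in T_{t_i}$, $|\eta_i|\le\mu\le\mu_0$; then $|t_1-t_2|\le\delta_0$. Dotting the identity $\gamma(t_1)-\gamma(t_2)=\eta_2-\eta_1$ with $\gamma'(t_1)$: the left side equals $\int_{t_2}^{t_1}\gamma'(s)\cdot\gamma'(t_1)\,ds$, which by the curvature bound is at least $\tfrac12|t_1-t_2|$ in absolute value once $|t_1-t_2|\le\delta_0$; the right side is $\eta_2\cdot\gamma'(t_1)$ (since $\eta_1\perp\gamma'(t_1)$), and $|\eta_2\cdot\gamma'(t_1)|=|\eta_2\cdot(\gamma'(t_1)-\gamma'(t_2))|\le M_1|t_1-t_2|\,\mu$. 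Combining, $\tfrac12|t_1-t_2|\le M_1\mu|t_1-t_2|$, so after shrinking $\mu_0$ further ($M_1\mu_0<\tfrac14$) we get $t_1=t_2$, hence $\eta_1=\eta_2$. So $\Psi$ is injective on the full tube $|\eta|\le\mu$ for $\mu\le\mu_0$.

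Then I would compute the Jacobian. Parametrize $\eta$ in an orthonormal frame $e_2(t),\dots,e_n(t)$ of $T_t$ that depends smoothly on $t$ (possible locally on each subinterval of length $\delta_0$; one may even partition $[a,b]$ into finitely many such pieces and sum). Writing $\eta=\sum_{j\ge2}\eta_j e_j(t)$, the partial derivatives of $\Psi$ are $\partial_{\eta_j}\Psi=e_j(t)$ and $\partial_t\Psi=\gamma'(t)+\sum_j\eta_j e_j'(t)$; the vectors $e_2,\dots,e_n$ are orthonormal and $\gamma'(t)$ is the unit normal to their span, while the correction $\sum\eta_j e_j'(t)$ has norm $\le C(M_1)\mu\le\tfrac12$ once $\mu_0$ is small (the frame derivatives are controlled by $M_1$). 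Hence the $n$-dimensional Jacobian of $\Psi$ is $\ge\tfrac12$. By the area formula, $H^n(\Psi(\text{tube}))=\int_a^b\!\int_{|\eta|\le\mu}J\Psi\,d\eta\,dt\ge \tfrac12\,\omega_{n-1}\mu^{n-1}(b-a)$, and since $b-a\ge|\gamma(b)-\gamma(a)|$ (unit speed), we conclude $H^n(N_\mu)\ge C\mu^{n-1}|\gamma(b)-\gamma(a)|$ with $C=C(M_1,M_2,n)$.

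The main obstacle is the injectivity/overlap control: two nearby slices $D_\mu(t_1)$, $D_\mu(t_2)$ can intersect, and one must rule out that the tube wraps around and re-covers itself from a far-away part of the curve. The chord–arc bound $M_2|t_1-t_2|\le|\gamma(t_1)-\gamma(t_2)|$ is exactly what prevents distant returns (it forces $|t_1-t_2|$ small), and the curvature bound then upgrades this to genuine injectivity via the transversality computation above; choosing $\mu_0$ and $\delta_0$ in the right order so that all the smallness requirements are simultaneously met is the delicate bookkeeping, but it is routine once the structure is laid out.
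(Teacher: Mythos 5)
Your proof is correct and follows essentially the same approach as the paper: parametrize the tube $N_\mu$ by $(t,\eta)\mapsto\gamma(t)+\eta$ with $\eta\in T_t$, establish injectivity, and bound the Jacobian away from zero so that the area/change-of-variables formula yields $H^n(N_\mu)\geq C\mu^{n-1}(b-a)\geq C\mu^{n-1}|\gamma(b)-\gamma(a)|$. The one place you do more than the paper is the injectivity step: the paper simply asserts that the slices $D_\mu(t_1)$ and $D_\mu(t_2)$ are disjoint for $\mu\leq\mu_0$, while you actually prove it --- the chord--arc bound forces $|t_1-t_2|$ small and the transversality computation (dotting $\gamma(t_1)-\gamma(t_2)=\eta_2-\eta_1$ with $\gamma'(t_1)$) then rules out overlap --- which is a correct and worthwhile filling of that gap.
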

\begin{proof}
First observe that there exists $\mu_0$ such that if $\mu\leq\mu_0$, then $D_{\mu}(t_1)\cap D_{\mu}(t_2)=\emptyset$, for $t_1\neq t_2$.

Consider the cylinder in $\R^{n}$ given by $D\times [a,b]=\left\{(x^{\prime},t):|x^{\prime}|\leq\mu;\;t\in[a,b]\right\}$, where $D=\{(x',0):|x'|\leq \mu\}$, and define $F:D\times [a,b]\rightarrow N_{\mu}$ by
\[
F(x^{\prime},t)=\gamma(t)+A(t)(x^{\prime},0)
\]
where $A(t)$ is the $n\times n$ matrix whose column vectors are $\{\eta_1(t),...,\eta_{n-1}(t),\gamma^{\prime}(t)\}$ where $\eta_{i}(t)$ are chosen so that they are smooth with $A(t)A^{T}(t)=I$; here $(x^{\prime},0)$ is a column vector.
Notice that $F$ is one to one and each disk $D\times\{t\}$ is mapped to $D_{\mu}(t)$.
By the formula of change of variables
\begin{align*}
H^{n}\(N_\mu\)&=H^{n}\(F(D\times [a,b])\)=\int_{D\times [a,b]}|\det DF(x^{\prime},t)|dx^{\prime}dt\\
&\geq
CH^{n}(D\times [a,b])\geq C(b-a)\mu^{n-1}\geq C|\gamma(b)-\gamma(a)|\mu^{n-1},
\end{align*}
provided that $|\det DF(x^{\prime},t)|\geq C$ for some $C>0$. Indeed,
note that the matrix $DF(x^{\prime},t)$ has column vectors given by $\eta_1(t),...,\eta_{n-1}(t)$ and its last column vector is $\gamma^{\prime}(t)+x_1\eta_1^{\prime}(t)+...+x_{n-1}\eta_{n-1}^{\prime}(t)$, $x'=(x_1,\cdots ,x_{n-1})$.
Therefore, we can expand $\det DF(x^{\prime},t)=\det A(t)+\sum_{k=1}^{n-1}x_{k}\det \Lambda_{k}(t)$, where $\Lambda_{k}(t)$ is the matrix whose column vectors are $\eta_1(t),...,\eta_{n-1}(t),\eta_{k}^{\prime}(t)$.
It follows that $|\det DF(x^{\prime},t)|\geq |\det A(t)|-\sum_{k=1}^{n-1}|x_k||\det \Lambda_{k}(t)|\geq 1-\mu_0\sum_{k=1}^{n-1}|\det \Lambda_{k}(t)|\geq 1-\mu_0\, C$,
with $C$ depending only on $M_1,M_2$ and $n$. Then choosing $\mu_0$ sufficiently small the lemma follows.

\end{proof}
Finally, to verify that our example satisfies \ref{item:old hypotheses E}, we notice that the curves $[\bar Y,\hat Y]_{X_0}$ in the example satisfy the assumptions of the last lemma (the curves can be reparametrized to have $|\gamma^{\prime}|=1$) in $\R^{n-1}$ so it is applicable to our case, obtaining constants that depend only on the structure.
Also, varying the parameters $c_2$, $C$ and $R$ in the construction, we obtain a family of examples.

\section{Preliminary results for ovals and a maximum principle}\label{sec:preliminary for ovals and max principle}
\setcounter{equation}{0}

We analyze the function $h(x,Y,X_0)$ for $Y\in\Sigma$ and $X_0\in\Gamma_{c_1c_2}$; $x\in S^{n-1}$, corresponding to the oval $\mathcal O(Y,b)$.
From \ref{item:hypotheses A}, we can write $Y=X_0+s\,m$ with $m\in S^{n-1}$, $s=s_{X_0}(m)>0$, $x_0\cdot m\geq \kappa$, and recall that $b=|X_0|+\kappa|Y-X_0|=|X_0|+\kappa\, s$; $x_0=X_0/|X_0|$. Hence 
\begin{align}
b-\kappa^{2}\,x\cdot Y&=|X_0|\(1-\kappa^{2}\,x\cdot x_0\)+\kappa \,s\,\(1-\kappa\, x\cdot m\)\label{eq:expression for b -kappa2xcdotY}\\
b^{2}-\kappa^{2}|Y|^{2}&=(1-\kappa^{2})|X_0|^{2}+2\kappa \,s\,|X_0|\,(1-\kappa x_0\cdot m).\label{eq:expression for b2 -kappa2Y2}\end{align}
Settting 
\begin{equation}\label{eq:definition of B and C}
B=\dfrac{b-\kappa^{2}\, x\cdot Y}{1-\kappa^{2}}\quad \text{and}\quad C=\dfrac{b^{2}-\kappa^{2}|Y|^{2}}{1-\kappa^{2}},
\end{equation} 
we then can write 
\[
h(x,Y,X_0)=B-\sqrt{B^{2}-C}.
\]

In order to get to our crucial Lemma \ref{crucial}, first we need to prove three auxiliary lemmas.

\begin{lemma}\label{aux1}
Assume \ref{item:hypotheses A} and \ref{item:old hypotheses C}.
Let $\bar Y,\hat Y\in\Sigma$, and consider the ovals $\mathcal O(\bar Y,\bar b), \mathcal O(\hat Y,\hat b)$. If $X_0\in\Gamma_{c_1c_2}$ is a common point to both ovals, then 
with the notation above we have 
\[
\hat B\geq \bar B-\sqrt{\bar B^{2}-\bar C}, \qquad \forall x\in S^{n-1}.
\]
\end{lemma}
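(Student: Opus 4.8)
The plan is to unwind both sides in terms of the radial function $h$ and use the fact that $X_0$ lies on both ovals, together with the elementary inequality $\sqrt{A}\geq A/\sqrt{A'}$ when $0<A\leq A'$ (equivalently, $t\mapsto t-\sqrt{t^2-c}$ is monotone in the right way). Write $\bar B,\bar C$ for the quantities in \eqref{eq:definition of B and C} associated with the oval $\mathcal O(\bar Y,\bar b)$, and $\hat B,\hat C$ for those associated with $\mathcal O(\hat Y,\hat b)$; thus $h(x,\bar Y,X_0)=\bar B-\sqrt{\bar B^2-\bar C}$ and similarly for $\hat Y$. Since $X_0$ is on the oval $\mathcal O(\bar Y,\bar b)$ we have $|X_0|=h(x_0,\bar Y,X_0)$ with $x_0=X_0/|X_0|$, i.e.\ $|X_0|=\bar B_0-\sqrt{\bar B_0^2-\bar C}$ where $\bar B_0$ is $\bar B$ evaluated at $x=x_0$; likewise $|X_0|=\hat B_0-\sqrt{\hat B_0^2-\hat C}$.

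First I would record, from \eqref{eq:expression for b -kappa2xcdotY}--\eqref{eq:expression for b2 -kappa2Y2} (or directly from \eqref{eq:definition of B and C}), that $\bar C=|X_0|\,(2\bar B_0-|X_0|)$ and $\hat C=|X_0|\,(2\hat B_0-|X_0|)$; this is just the statement $|X_0|^2-2\bar B_0|X_0|+\bar C=0$ rearranged, and it identifies $\bar C$ and $\hat C$ in terms of the value of $B$ at the base point $x_0$. Next I would compare $\bar B_0$ and $\hat B_0$: using $b=|X_0|+\kappa s$ and the formula for $b-\kappa^2 x\cdot Y$ at $x=x_0$, one gets $\bar B_0=\dfrac{|X_0|(1-\kappa^2)+\kappa\bar s(1-\kappa x_0\cdot\bar m)}{1-\kappa^2}$ and similarly for $\hat B_0$, where $\bar m,\bar s$ (resp.\ $\hat m,\hat s$) are the direction and distance of $\bar Y$ (resp.\ $\hat Y$) from $X_0$ as in \ref{item:hypotheses A}. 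The goal then reduces to the inequality
\[
\hat B(x)\;\geq\;\bar B(x)-\sqrt{\bar B(x)^2-\bar C}\qquad\text{for all }x\in S^{n-1}.
\]

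The key point is that the right-hand side $\bar B-\sqrt{\bar B^2-\bar C}$ is at most $\bar C/(2\bar B-|X_0|)$ — more precisely, since $\bar B-\sqrt{\bar B^2-\bar C}=\dfrac{\bar C}{\bar B+\sqrt{\bar B^2-\bar C}}$ and $\bar B+\sqrt{\bar B^2-\bar C}\geq 2\bar B-|X_0|$ would need care; instead I would use $\bar B+\sqrt{\bar B^2-\bar C}\ge \bar B \ge \bar B_0$ after checking $\bar B(x)\geq\bar B_0$ is \emph{not} always true, so the cleaner route is: $\bar B-\sqrt{\bar B^2-\bar C}\le \sqrt{\bar C}$ when $\bar B\ge 0$ (this is elementary), and then bound $\sqrt{\bar C}\le \hat B(x)$. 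For the last bound I would use that $\hat B(x)$ is an affine, increasing function of the single scalar $x\cdot\hat Y$ — actually decreasing in $x\cdot\hat Y$ — with a lower bound obtained by plugging in the extreme value of $x\cdot\hat Y$; here assumptions \ref{item:hypotheses A}(a) (so $x\cdot\hat Y\ge \hat b$ when refraction holds, but not for all $x\in S^{n-1}$) and the distance hypothesis \ref{item:old hypotheses C} with the specific constant $C(\kappa)$ enter. The main obstacle is precisely this last step: making the estimate $\sqrt{\bar C}\le \hat B(x)$ (or whatever the correct intermediate quantity is) hold for \emph{all} $x\in S^{n-1}$, not merely those $x$ satisfying the refraction inequality — this is where the quantitative smallness of $|X_0|/|Y-X_0|$ forced by $C(\kappa)$ in \ref{item:old hypotheses C} must be used, and it is the computation the footnote in the excerpt is pointing at. Once that inequality is in hand, the chain $\hat B\ge\sqrt{\bar C}\ge\bar B-\sqrt{\bar B^2-\bar C}=h(x,\bar Y,X_0)$ closes the proof; I would also double-check the boundary behavior (e.g.\ that $\bar B\geq 0$ throughout, which follows from \eqref{eq:expression for b -kappa2xcdotY} since each factor $1-\kappa^2 x\cdot x_0$ and $1-\kappa x\cdot m$ is nonnegative for $\kappa<1$).
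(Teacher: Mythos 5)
The proposal opens correctly — rewriting $\bar B-\sqrt{\bar B^2-\bar C}=\dfrac{\bar C}{\bar B+\sqrt{\bar B^2-\bar C}}$ and recognizing that the quantitative constant in \ref{item:old hypotheses C} is what must close the argument — but then takes a turn that does not work. You relax the denominator to $\sqrt{\bar C}$ and try to finish by proving $\hat B\geq\sqrt{\bar C}$, i.e.\ $\hat B^2\geq\bar C$. This is a strictly stronger statement than what is actually needed, and it does not follow from \ref{item:old hypotheses C}: expanding it with $\bar C\leq|X_0|^2+2\kappa|X_0|\bar s$ and $\hat B\geq\dfrac{\kappa\hat s}{1+\kappa}$ reduces to
\[
\frac{\kappa^2}{(1+\kappa)^2}\;\geq\;\frac{|X_0|^2}{\hat s^2}+2\kappa\,\frac{|X_0|}{\hat s}\cdot\frac{\bar s}{\hat s},
\]
which contains the ratio $\bar s/\hat s$ with no structural control (take $\bar Y$ much farther from $X_0$ than $\hat Y$ and the right side blows up). \ref{item:old hypotheses C} only bounds $|X_0|/\bar s$ and $|X_0|/\hat s$ from above, not their ratio.

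The correct move — and the one the paper makes — is not to replace the denominator by $\sqrt{\bar C}$ but to use the still-valid and sharper bound $\bar B-\sqrt{\bar B^2-\bar C}\leq\dfrac{\bar C}{\bar B}$ (which holds since $\bar B,\bar B^2-\bar C\geq 0$), so that the claim becomes the \emph{symmetric} inequality $\bar C\leq\bar B\,\hat B$. After inserting the explicit expressions \eqref{eq:expression for b -kappa2xcdotY}--\eqref{eq:expression for b2 -kappa2Y2} and crude one-sided bounds on each factor, this reduces to
\[
\frac{|X_0|}{\bar s}\cdot\frac{|X_0|}{\hat s}+2\kappa\,\frac{|X_0|}{\hat s}\;\leq\;\frac{\kappa^2}{(1+\kappa)^2},
\]
in which both $\bar s$ and $\hat s$ appear only through the quotients $|X_0|/\bar s$ and $|X_0|/\hat s$ that \ref{item:old hypotheses C} controls — this is exactly why the constant $C(\kappa)$ has the value it does. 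So the diagnosis is right (the $C(\kappa)$-smallness is the engine), the identity at the start is right, but the intermediate majorant $\sqrt{\bar C}$ throws away the factor of $\bar B$ that is precisely what makes the final inequality symmetric and hence provable from the structural hypotheses. Replace $\sqrt{\bar C}$ by $\bar C/\bar B$ and the rest of your plan goes through.
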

\begin{proof}
Since $h(x,\bar Y,X_0)>0$, it follows that 
$\bar B-\sqrt{\bar B^{2}-\bar C}
=
\dfrac{\bar C}{\bar B+\sqrt{\bar B^{2}-\bar C}}
\leq\dfrac{\bar C}{\bar B}$.
So, it is enough to show $\bar C\leq \bar B\,\hat B$, which is equivalent to show that
\begin{align*}
&\dfrac{(1-\kappa^{2})|X_0|^{2}+2\kappa \,\bar s\,|X_0|\,(1-\kappa\, x_0\cdot \bar m)}{1-\kappa^{2}}\\
&\leq
\dfrac{\left(|X_0|\(1-\kappa^{2}\,x\cdot x_0\)+\kappa\, \bar s\,\(1-\kappa\,x\cdot \bar m\)\right)\left(|X_0|(1-\kappa^{2}\,x\cdot x_0)+\kappa \,\hat s\,(1-\kappa\,x\cdot \hat m)\right)}{(1-\kappa^{2})^{2}},
\end{align*}
where we have used the notation $\bar Y=X_0+\bar s\,\bar m$ and $\hat Y=X_0+\hat s\,\hat m$, $\bar s=s_{X_0}(\bar m),\hat s=s_{X_0}(\hat m)>0$, $x_0=X_0/|X_0|$ with $x_0\cdot \bar m\geq \kappa$, $x_0\cdot \hat m\geq \kappa$.
The last inequality is equivalent to
\begin{align*}
&|X_0|^{2}\left(1-\frac{(1-\kappa^{2}\, x\cdot x_0)^{2}}{(1-\kappa^{2})^{2}}\right)+\dfrac{2\kappa|X_0|\,\bar s(1-\kappa\, x_0\cdot \bar m)}{1-\kappa^{2}}\\
&\leq
\dfrac{\kappa|X_0|(1-\kappa^{2}\, x\cdot x_0)}{(1-\kappa^{2})^{2}}\(\bar s\,(1-\kappa\,x\cdot \bar m)+\hat s\,(1-\kappa\, x\cdot \hat m)\)\\
&\qquad +\dfrac{\kappa^{2}}{(1-\kappa^{2})^{2}}\,\bar s\,\hat s\,(1- \kappa\,x\cdot \bar m)(1-\kappa\,x\cdot \hat m).
\end{align*}
The left hand side of the last inequality is $\leq |X_0|^{2}+2\kappa|X_0|\bar s$ and the right hand side is $\geq \dfrac{\kappa^{2}}{(1-\kappa^{2})^{2}}\,\bar s\,\hat s\,(1-\kappa)^{2}=\dfrac{\kappa^{2}}{(1+\kappa)^{2}}\,\bar s\,\hat s$. 
Therefore, if $|X_0|^{2}+2\kappa|X_0|\bar s\leq\dfrac{\kappa^{2}}{(1+\kappa)^{2}}\,\bar s\,\hat s$, then the desired inequality follows.
This is equivalent to
\[
\frac{|X_0|}{\bar s}\frac{|X_0|}{\hat s}+2\kappa\frac{|X_0|}{\hat s}\leq \frac{\kappa^{2}}{(1+\kappa)^{2}}
\]
which follows from \ref{item:old hypotheses C}.

\end{proof}

A second auxiliary calculus lemma is as follows.
\begin{lemma}\label{aux2}
Consider the two variable function $f(B,C)=B-\sqrt{B^{2}-C}$ on the set $0\leq C\leq B^{2}$ and $B\geq 0$.
Fix $(\bar B,\bar C)$ in that set, and suppose that $f(\bar B,\bar C)\leq B$.
Then $f(B,C)\leq f(\bar B,\bar C)$ if and only if $C-\bar C\leq 2(B-\bar B)f(\bar B,\bar C)$.
In addition, if $C-\bar C\leq 2(B-\bar B)f(\bar B,\bar C)-E$ for some $E\geq 0$, then $f(B,C)\leq f(\bar B,\bar C)-\dfrac{E}{B+\sqrt{B^{2}-C}-f(\bar B,\bar C)}$.
\end{lemma}
\begin{proof}
Assume that $C-\bar C\leq 2(B-\bar B)f(\bar B,\bar C)-E$, for some $E\geq 0$.
Then 
\begin{align*}
f(B,C)-f(\bar B,\bar C)&=\dfrac{C-\bar C-(f(B,C)+f(\bar B,\bar C))(B-\bar B)}{\sqrt{B^{2}-C}+\sqrt{\bar B^{2}-\bar C}}\\
&\leq
\dfrac{2(B-\bar B)f(\bar B,\bar C)-E-(f(B,C) +f(\bar B,\bar C))(B-\bar B)}{\sqrt{B^{2}-C}+\sqrt{\bar B^{2}-\bar C}}\\
&=
\dfrac{(f(\bar B,\bar C) -f(B,C))(B-\bar B)-E}{\sqrt{B^{2}-C}+\sqrt{\bar B^{2}-\bar C}}.
\end{align*}
Therefore,
\[
\(f(B,C)-f(\bar B,\bar C)\)\(1+\frac{B-\bar B}{\sqrt{B^{2}-C}+\sqrt{\bar B^{2}-\bar C}}\)\leq \dfrac{-E}{\sqrt{B^{2}-C}+\sqrt{\bar B^{2}-\bar C}}
\]
which implies 
\[
f(B,C)\leq f(\bar B,\bar C)-\dfrac{E}{B+\sqrt{B^{2}-C}-f(\bar B,\bar C)}.
\]
Conversely,  assume $f(B,C)\leq f(\bar B,\bar C)$, that is, $B-\sqrt{B^{2}-C}\leq f(\bar B,\bar C)$ which implies
\[
0\leq B-f(\bar B,\bar C)\leq\sqrt{B^{2}-C},
\]
where the first inequality is from the assumption. 
Hence,
\[
C\leq 2Bf(\bar B,\bar C)-f(\bar B,\bar C)^{2}=2(B-\bar B)f(\bar B,\bar C)+\bar C.
\]
\end{proof}

The third auxiliary lemma says that the oval passing thru $X_0$ is enclosed by the ellipsoid with axis $m$ and eccentricity $\kappa$ passing thru $X_0$ when $x_0\cdot m\geq \kappa$.

\begin{lemma}\label{aux3}
Suppose $x_0\cdot m\geq\kappa$ and let $Y=X_0+s\,m$ with $s>0$; $x_0=X_0/|X_0|$.
Then 
\[
\left\{X:|X|+\kappa|X-Y|\leq |X_0|+\kappa|X_0-Y|\right\}\subseteq
\left\{X:|X|-\kappa\,X\cdot m\leq |X_0|-\kappa\, X_0\cdot m\right\}.
\]
In particular, 
\[
h(x,Y,X_0)\leq \dfrac{|X_0|(1-\kappa\, x_0\cdot m)}{1-\kappa\, x\cdot m} 
\]
for all $x\in S^{n-1}$.
\end{lemma}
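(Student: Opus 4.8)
\emph{Proposal.} The set on the left is the solid Descartes oval bounded by $\mathcal O(Y,b)$ with $b=|X_0|+\kappa|X_0-Y|$, while the set on the right is the solid ellipsoid of eccentricity $\kappa$ with one focus at the origin, axis $m$, and passing through $X_0$ --- which is exactly the degenerate ``oval with its second focus sent to infinity in the direction $m$''. The inclusion holds because, in the defining inequality $|X|+\kappa|X-Y|\le b$, replacing $|X-Y|$ by its lower bound coming from the direction $m$ only weakens the constraint, and the resulting constraint is precisely the one defining the ellipsoid. I would make this precise by a two-line estimate rather than any genuine geometric argument.

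First I would record the relevant identities. Since $Y=X_0+s\,m$ with $s>0$ and $|m|=1$, we have $X_0-Y=-s\,m$, hence $|X_0-Y|=s$ and $b=|X_0|+\kappa s$. Writing $c:=|X_0|-\kappa\,X_0\cdot m$ and using $s=(Y-X_0)\cdot m$ gives the identity $b=c+\kappa\,Y\cdot m$. The key structural point here is that the Cauchy--Schwarz inequality $|Z|\ge Z\cdot m$ is saturated at $Z=Y-X_0$, precisely because $Y-X_0$ is a positive multiple of $m$.

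For the inclusion, take any $X$ with $|X|+\kappa|X-Y|\le b$. By Cauchy--Schwarz, $|X-Y|\ge (Y-X)\cdot m$, so
\[
|X|\le b-\kappa|X-Y|\le b-\kappa\,(Y-X)\cdot m=c+\kappa\,Y\cdot m-\kappa\,Y\cdot m+\kappa\,X\cdot m=c+\kappa\,X\cdot m,
\]
i.e. $|X|-\kappa\,X\cdot m\le c=|X_0|-\kappa\,X_0\cdot m$, which is exactly membership in the second set.

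For the ``in particular'' statement, by the polar representation of the oval the point $X:=h(x,Y,X_0)\,x=\rho(x,Y,b)\,x$ lies on $\mathcal O(Y,b)$, hence satisfies $|X|+\kappa|X-Y|=b$ and so belongs to the left-hand set; applying the inclusion and using $|X|=h(x,Y,X_0)$, $X\cdot m=h(x,Y,X_0)\,(x\cdot m)$, and $c=|X_0|\,(1-\kappa\,x_0\cdot m)$ yields $h(x,Y,X_0)\,(1-\kappa\,x\cdot m)\le |X_0|\,(1-\kappa\,x_0\cdot m)$. Since $|x|=|m|=1$ and $\kappa<1$, we have $1-\kappa\,x\cdot m\ge 1-\kappa>0$, so we may divide and conclude. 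There is essentially no obstacle: the only points requiring a moment's care are this last positivity, which permits the division, and the elementary observation that $|X-Y|\ge (Y-X)\cdot m$ is an equality when $X=X_0$, which is what makes the bound sharp at $X_0$.
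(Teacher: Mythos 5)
Your proof is correct and uses essentially the same key observation as the paper's, namely the Cauchy--Schwarz step $|X-Y|\geq (Y-X)\cdot m$ (which is exactly the inequality $(X-Y)\cdot m+|X-Y|\geq 0$ used in the paper's chain of estimates). The bookkeeping via $b=c+\kappa\,Y\cdot m$ is just a slight rearrangement of the same algebra.
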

\begin{proof}
Let $X$ with $|X|+\kappa|X-Y|\leq |X_0|+\kappa|X_0-Y|$. Then
\begin{align*}
|X|-\kappa\, X\cdot m&=|X|+\kappa|X-Y|-\kappa\, X\cdot m-\kappa|X-Y|\leq|X_0|+\kappa|X_0-Y|-\kappa\,\(X\cdot m+|X-Y|\)\\
&=
|X_0|+\kappa|X_0-Y|-\kappa\,\((X-Y)\cdot m+|X-Y|\)-\kappa\, Y\cdot m\\
&\leq
|X_0|+\kappa|X_0-Y|-\kappa\, Y\cdot m=|X_0|+\kappa|X_0-Y|-\kappa\, (Y-X_0)\cdot m-
\kappa\, X_0\cdot m\\
&=
|X_0|+\kappa \,s-\kappa\, s\,m\cdot m-\kappa\, X_0\cdot m
=
|X_0|-\kappa\, X_0\cdot m.
\end{align*}
\end{proof}

We are now ready to prove a crucial lemma akin to \cite[Prop. 5.1]{loeper:actapaper} and \cite[Thm. 4.10 (DMASM)]{2010-kim-mccann:DMASM} in optimal mass transport.

\begin{lemma}\label{crucial}
Assume \ref{item:hypotheses A}, \ref{item:old hypotheses C}, and \ref{item:old hypotheses D}.
There exists a structural constant $C_0>0$ such that if
 $\bar Y,\hat Y \in\Sigma$,  $X_0\in\Gamma_{c_1c_2}$ with 
 $\bar Y=X_0+\bar s\,\bar m$, $\bar s=s_{X_0}(\bar m)$, $\hat Y=X_0+\hat s\,\hat m$, $\hat s=s_{X_0}(\hat m)$,
then 
\[
C_0\,\lambda\,(1-\lambda)\,|\bar Y-\hat Y|^{2}\,|x-x_0|^{2}+h(x,Y,X_0)\leq\max\{h(x,\bar Y,X_0),h(x,\hat Y,X_0)\}
\]
for all $x\in S^{n-1}$, $Y=X_0+s_{X_0}(m(\lambda))$, $m(\lambda)\in [\bar m,\hat m]_{x_0}$ and $0<\lambda<1$.

\end{lemma}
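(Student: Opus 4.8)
The plan is to fix $x\in S^{n-1}$ and reduce the statement to the two--variable function $f(B,C)=B-\sqrt{B^{2}-C}$ of Lemma~\ref{aux2}, exploiting the representation $h(\cdot,\cdot,X_0)=f(B,C)$ together with Lemmas~\ref{aux1}, \ref{aux3} and hypothesis~\ref{item:old hypotheses D}. Using the symmetry $\lambda\leftrightarrow 1-\lambda$, $\bar m\leftrightarrow\hat m$ (which fixes $\lambda(1-\lambda)$, $|\bar Y-\hat Y|$, the curve $[\bar m,\hat m]_{x_0}$ and the point $Y(\lambda)$, interchanging only $\bar\beta\leftrightarrow\hat\beta$) I may assume $\bar h:=h(x,\bar Y,X_0)\ge h(x,\hat Y,X_0)$; write $h:=h(x,Y,X_0)$. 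From \eqref{eq:expression for b -kappa2xcdotY}, \eqref{eq:expression for b2 -kappa2Y2}, \eqref{eq:definition of B and C} and the affine identities
\[
1-\kappa\,x\cdot m(\lambda)=(1-\bar\beta-\hat\beta)(1-x\cdot x_0)+\bar\beta\,(1-\kappa\,x\cdot\bar m)+\hat\beta\,(1-\kappa\,x\cdot\hat m),
\]
\[
1-\kappa\,x_0\cdot m(\lambda)=\bar\beta\,(1-\kappa\,x_0\cdot\bar m)+\hat\beta\,(1-\kappa\,x_0\cdot\hat m)
\]
(valid since the three coefficients in \eqref{eq:writing of m in the curve hatm bar m} sum to $1$ and $x_0\cdot x_0=1$) one obtains formulas for $B(\lambda),C(\lambda),\bar B,\bar C$ that are affine in the respective $s$ variables. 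Since $X_0$ lies on both $\mathcal O(\bar Y,\bar b)$ and $\mathcal O(Y(\lambda),b_\lambda)$, Lemma~\ref{aux1} gives $B(\lambda)\ge f(\bar B,\bar C)=\bar h$, so the hypothesis of Lemma~\ref{aux2} is met with base point $(\bar B,\bar C)$.

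The core of the proof is the linearized inequality
\[
C(\lambda)-\bar C\ \le\ 2\,\bigl(B(\lambda)-\bar B\bigr)\,\bar h\ -\ E,\qquad E\ \ge\ C_0'\,\lambda(1-\lambda)\,|\bar Y-\hat Y|^{2}\,(1-x\cdot x_0),
\]
with $C_0'>0$ a structural constant. Granting it, Lemma~\ref{aux2} yields $h\le\bar h-E/\bigl(B(\lambda)+\sqrt{B(\lambda)^{2}-C(\lambda)}-\bar h\bigr)\le\bar h-E/(2B(\lambda))$, and $B(\lambda)$ is bounded above by a structural constant (as $|X_0|\le c_2$, $1-\kappa\,x\cdot m(\lambda)\le 1+\kappa$, and $s_{X_0}$ is bounded, $\Sigma$ and $\Gamma_{c_1c_2}$ being bounded); hence $h\le\bar h-(C_0'/C_1)\,\lambda(1-\lambda)|\bar Y-\hat Y|^{2}(1-x\cdot x_0)$, and since $1-x\cdot x_0=\tfrac12|x-x_0|^{2}$ this is the assertion with $C_0=C_0'/(2C_1)$.

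To prove the linearized inequality I substitute the explicit formulas, use Lemma~\ref{aux3} (i.e. $\bar h\le |X_0|(1-\kappa\,x_0\cdot\bar m)/(1-\kappa\,x\cdot\bar m)$ and its analogue for $\hat h$) to fix the signs of the various terms, and eliminate $s(\lambda)$ via \ref{item:old hypotheses D}, which bounds $1/s(\lambda)$ below by the $\bar\beta,\hat\beta$--combination of $1/\bar s$ and $1/\hat s$ up to the $\mu/|X_0|$ correction. The quantitative defect $E$ comes from upgrading \ref{item:old hypotheses D} from a qualitative to a quantitative concavity: in the $v$--parametrization of the last Remark of Section~\ref{sec:preliminaries structure and example}, \ref{item:old hypotheses D} says that $v\mapsto\bigl(1/s_{X_0}(m(v))+\mu/|X_0|\bigr)/t(v)$ is concave on the disk $\{v\perp x_0,\ |v|^{2}\le\kappa^{2}|X_0|^{2}/(1-\kappa^{2})\}$, while $v\mapsto 1/t(v)=\tfrac{\kappa}{1-\kappa^{2}}\bigl(|X_0|+\sqrt{\kappa^{2}|X_0|^{2}-(1-\kappa^{2})|v|^{2}}\,\bigr)$ is strongly concave on that disk with a structural modulus ($\ge 1/|X_0|$, being a positive multiple of a concave square root); since $\mu<\kappa$, these combine to make $v\mapsto\bigl(1/s_{X_0}(m(v))+\kappa/|X_0|\bigr)/t(v)$ strongly concave with modulus $\ge(\kappa-\mu)/|X_0|^{2}\ge(\kappa-\mu)/c_2^{2}>0$. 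Consequently, along $v_\gamma=(1-\gamma)\bar v+\gamma\hat v$ this function has concavity defect at least a structural constant times $\gamma(1-\gamma)|\bar v-\hat v|^{2}$. Feeding this into the algebra, and using $|\bar v-\hat v|\sim|\bar m-\hat m|\sim|\bar Y-\hat Y|$ (from \eqref{eq:difference of ms bounded by difference of Ys} and the Lipschitz bound in \ref{item:hypotheses A}), the change of parameter $\lambda\leftrightarrow\gamma$ of the same Remark (so that $\bar\beta\hat\beta=\beta(\lambda)^{2}\lambda(1-\lambda)$ and $\lambda(1-\lambda)\sim\gamma(1-\gamma)$, $\beta(\lambda)$ and $t(v)$ being two--sidedly bounded), together with the fact that at $x=x_0$ all of $\bar h$, $h(x,\hat Y,X_0)$, $h(x,Y,X_0)$ equal $|X_0|$ — so both sides of the linearized inequality vanish there, forcing the extra linear factor $1-x\cdot x_0$ — yields $E\ge C_0'\,\lambda(1-\lambda)|\bar Y-\hat Y|^{2}(1-x\cdot x_0)$.

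I expect the main obstacle to be exactly this last estimate: controlling the bookkeeping of signs after inserting \ref{item:old hypotheses D} and Lemma~\ref{aux3}, and extracting the clean lower bound for $E$ uniformly over all $x\in S^{n-1}$ (not merely near $x_0$) — in particular showing that near rays along which the two extreme ovals $\mathcal O(\bar Y,\bar b)$, $\mathcal O(\hat Y,\hat b)$ cross, the intermediate oval $\mathcal O(Y,b_\lambda)$ is strictly interior by at least the required amount. All constants arising are structural, since $|X_0|\in[c_1,c_2]$, $s_{X_0}$ is two--sidedly bounded via \eqref{eqlower bound for s_X} and the boundedness of $\Sigma$, $\beta(\lambda)$ and $t(v)$ are two--sidedly bounded, and $1-\kappa\,x\cdot m\in[1-\kappa,1+\kappa]$.
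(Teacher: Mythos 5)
Your setup is correct and uses the same tools the paper uses (Lemmas \ref{aux1}, \ref{aux2}, \ref{aux3}, the affine decomposition of $m(\lambda)$ via the apex $x_0/\kappa$, and reduction to the linearized inequality $C-\bar C\le 2(B-\bar B)\bar h-E$). But the heart of the matter — obtaining the quantitative defect $E\gtrsim\lambda(1-\lambda)|\bar Y-\hat Y|^2|x-x_0|^2$ — is not actually proved, and you say so yourself (``I expect the main obstacle to be exactly this last estimate''). The route you sketch is also genuinely different from the paper's, and I think less clean, for the following reasons.

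The paper never invokes any strong concavity of $\Phi_\kappa$ or of $1/t(v)$. It uses \ref{item:old hypotheses D} exactly as stated (weak concavity of $\Phi_\mu$, equivalently \eqref{nn} with $K=\mu(1-\beta(\lambda))/|X_0|$). The $|x-x_0|^2$ factor in $E$ comes from the third term $III$, i.e., from the coefficient of $x_0/\kappa$ in the affine representation \eqref{eq:writing of m in the curve hatm bar m}: since $\bar\beta+\hat\beta=\beta(\lambda)<1$, one has $F(m(\lambda))=\bar\beta F(\bar m)+\hat\beta F(\hat m)-(1-\beta(\lambda))\bar h(1-x\cdot x_0)$ for the affine form $F(m)=|X_0|(1-\kappa x_0\cdot m)-\bar h(1-\kappa x\cdot m)$. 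The coefficient $(\kappa-\mu)/\kappa$ then arises because the correction term $K\,\bar s\,s\,F(\bar m)$ from \ref{item:old hypotheses D} can only eat a proportion $\mu/\kappa<1$ of this $III$ contribution; this is established by the explicit identity $F(\bar m)=\dfrac{\bar h|X_0|\,|x-x_0|^2-(1-\kappa^2)|X-X_0|^2}{2\kappa\bar s}\le\dfrac{\bar h|X_0|\,|x-x_0|^2}{2\kappa\bar s}$ (equation \eqref{iii}), applied with $X=\bar h\, x$. Finally, the $\lambda(1-\lambda)|\bar m-\hat m|^2$ factor comes from the computation $1-\beta(\lambda)=\kappa(1-|m_\lambda|^2)/\Delta=\kappa\lambda(1-\lambda)|\bar m-\hat m|^2/\Delta$ with $\Delta$ bounded.

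Your alternative — upgrading to strong concavity of $\Phi_\kappa(v)=(1/s_{X_0}(m(v))+\kappa/|X_0|)/t(v)$ by adding the strong concavity of $(\kappa-\mu)/(|X_0|\,t(v))$ — is not wrong in principle, but once you feed the resulting refined version of \eqref{nn} (call its correction $K'$) back into the algebra, you still face the same issue: the gain you extract is either proportional to $F(\bar m)$ (when $K'\ge 0$, in which case you must \emph{also} prove the two-sided estimate $F(\bar m)\sim|x-x_0|^2$ — and in fact only the upper bound holds uniformly, so you must use it there exactly as the paper does), or it reduces to the paper's $III$-term argument (when $K'<0$). In other words, the strong-concavity detour does not obviate the key estimate on $F(\bar m)=\Delta$ that you did not address, and it forces a case split. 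The phrase ``both sides vanish at $x=x_0$, forcing the extra linear factor $1-x\cdot x_0$'' is not an argument; the factor must come from an explicit bound such as \eqref{iii}, and that is precisely the step missing from your proposal.

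To summarize: the framing and first half are on target and essentially match the paper; the mechanism you propose for producing $E$ is a different and more elaborate route than the paper's, and its crucial quantitative step is left unproved. To close the gap you should (i) prove the algebraic identity $\bar\beta(1-\kappa x\cdot\bar m)+\hat\beta(1-\kappa x\cdot\hat m)-(1-\kappa x\cdot m(\lambda))=-\tfrac12(1-\beta(\lambda))|x-x_0|^2$; (ii) prove the upper bound $\bar s\,F(\bar m)\le\tfrac{1}{2\kappa}\bar h|X_0||x-x_0|^2$; (iii) combine them with \ref{item:old hypotheses D} to get $E=(1-\mu/\kappa)\,\kappa s\,\bar h\,(1-\beta(\lambda))|x-x_0|^2/(1-\kappa^2)$; and (iv) prove $1-\beta(\lambda)\ge c_\kappa\,\lambda(1-\lambda)|\bar m-\hat m|^2$ and $f(\bar B,\bar C)\ge c>0$.
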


\begin{proof}
Fix $x\in S^{n-1}$ and assume without lost of generality that $h(x,\bar Y,X_0)\geq h(x,\hat Y,X_0)$, that is,
$f(\hat B,\hat C)\leq f(\bar B,\bar C)$.
We will show that
\[
C_0\lambda(1-\lambda)|\bar Y-\hat Y|^{2}|x-x_0|^{2}+h(x,Y,X_0)\leq h(x,\bar Y,X_0).
\]
By Lemma \ref{aux1}, we have 
$\hat B\geq \bar B-\sqrt{\bar B^{2}-\bar C}=f(\bar B,\bar C)$ so we can apply Lemma \ref{aux2} to obtain 
\[
\hat C-\bar C\leq 2f(\bar B,\bar C)(\hat B-\bar B).
\]
This means
\[
2\kappa|X_0|\(\hat s\,(1-\kappa\, x_0\cdot \hat m)-\bar s\,(1-\kappa\, x_0\cdot \bar m)\)
\leq
2\kappa f(\bar B,\bar C)\(\hat s\,(1-\kappa\, x\cdot \hat m)-\bar s\,(1-\kappa\, x\cdot \bar m)\)
\]
which is equivalently to
\begin{equation}\label{eq:hat less than or equal than bar}
\hat s\,\(|X_0|(1-\kappa\, x_0\cdot \hat m)-f(\bar B,\bar C)\,(1-\kappa\, x\cdot \hat m)\)
\leq
\bar s\,\(|X_0|(1-\kappa\, x_0\cdot \bar m)-f(\bar B,\bar C)\,(1-\kappa\, x\cdot \bar m)\).
\end{equation}
We will show that
\begin{equation}\label{eq:inequality with E first}
C-\bar C\leq 2f(\bar B,\bar C)(B-\bar B)-E
\end{equation}
with some $E$ to be chosen at the end, where $B$ and $C$ are given in \eqref{eq:definition of B and C} corresponding to 
$Y=X_0+s_{X_0}(m(\lambda))$. 
To show \eqref{eq:inequality with E first}, is equivalent to show that
\[
|X_0|\,\(s\,(1-\kappa\, x_0\cdot m)-\bar s\,(1-\kappa\, x_0\cdot \bar m)\)\leq
f(\bar B,\bar C)\,\(s\,(1-\kappa\, x\cdot m)-\bar s\,(1-\kappa\, x\cdot \bar m)\)-\frac{(1-\kappa^{2})E}{2\kappa},
\]
for $m=m(\lambda)\in [\bar m,\hat m]_{x_0}$, $s=s_{X_0}(m(\lambda))$, and $0<\lambda<1$.
Equivalently, we will show 
\begin{align}\label{eq:inequality with E}
 &s\,\(|X_0|(1-\kappa\, x_0\cdot m)-f(\bar B,\bar C)\,(1-\kappa\, x\cdot m)\)\\
 &\leq
\bar s\,\(|X_0|(1-\kappa\, x_0\cdot \bar m)-f(\bar B,\bar C)\,(1-\kappa\, x\cdot \bar m)\)-\frac{(1-\kappa^{2})E}{2\kappa}.\notag
\end{align}
Indeed, first recall that $m$ can be written as in \eqref{eq:writing of m in the curve hatm bar m} with $\bar\beta(\lambda)=(1-\lambda)\beta(\lambda)$ and $\hat\beta(\lambda)=\lambda\beta(\lambda)$ with $\beta(\lambda)$ defined in \eqref{eq:definition of beta of lambda}.
From \eqref{eq:writing of m in the curve hatm bar m} 
\begin{align*}
&s\,\(|X_0|(1-\kappa\, x_0\cdot m)-f(\bar B,\bar C)\,(1-\kappa\, x\cdot m)\)\\
&=
s\,\(|X_0|\(\bar\beta\,(1-\kappa\, x_0\cdot \bar m)+\hat\beta\,(1-\kappa\, x_0\cdot \hat m)\)-f(\bar B,\bar C)(1-\kappa\, x\cdot m)\)\\
&=
s\,\bar\beta\,\(|X_0|(1-\kappa\, x_0\cdot \bar m)-f(\bar B,\bar C)\,(1-\kappa\, x\cdot \bar m)\)\\
&\qquad +
s\,\hat\beta\,\(|X_0|(1-\kappa\, x_0\cdot \hat m)-f(\bar B,\bar C)\,(1-\kappa\, x\cdot \hat m)\)\\
&\qquad \qquad +
s\,f(\bar B,\bar C)\,\(\bar\beta\,(1-\kappa\, x\cdot \bar m)+\hat\beta\,(1-\kappa\, x\cdot \hat m)-(1-\kappa\, x\cdot m)\)\\
&=I+II+III.
\end{align*}
Again from \eqref{eq:writing of m in the curve hatm bar m} 
\[
\bar\beta\,(1-\kappa\, x\cdot \bar m)+\hat\beta\,(1-\kappa\, x\cdot \hat m)-(1-\kappa\, x\cdot m)=
\(\bar\beta+\hat\beta-1\)(1- x\cdot x_0)=\frac{1}{2}\(\bar\beta+\hat\beta-1\)|x-x_0|^{2}.
\]
From \eqref{eq:hat less than or equal than bar}
\begin{align*}
II&=s\,\hat\beta\,\(|X_0|(1-\kappa\, x_0\cdot \hat m)-f(\bar B,\bar C)\,(1-\kappa\, x\cdot \hat m)\)\\
&\leq
\dfrac{s\,\hat\beta\,\bar s}{\hat s}\,\(|X_0|(1-\kappa\, x_0\cdot \bar m)-f(\bar B,\bar C)\,(1-\kappa\, x\cdot \bar m)\).
\end{align*}

If we now let
\[
K:=\dfrac{\mu}{|X_0|}\,(1-\beta(\lambda)),
\]
$\bar \beta(\lambda)+\hat \beta(\lambda)=\beta(\lambda)$,
then with simplified notation \ref{item:old hypotheses D} reads
\begin{equation}\label{nn}
\bar\beta\,\hat s+\hat\beta\,\bar s\leq \dfrac{\bar s\,\hat s}{s}+K\,\bar s\,\hat s.
\end{equation}
We also notice that since $f(\bar B,\bar C)=h(x,\bar Y,X_0)$ and $x_0\cdot \bar m\geq \kappa$, by Lemma \ref{aux3} 
\[
|X_0|(1-\kappa\, x_0\cdot \bar m)-f(\bar B,\bar C)\,(1-\kappa\, x\cdot \bar m)\geq 0.
\]
Therefore
\begin{align}\label{keybis}
&s\,\(|X_0|(1-\kappa\, x_0\cdot m)-f(\bar B,\bar C)\,(1-\kappa\, x\cdot m)\)=I+II+III\notag \\
&\leq
\dfrac{s\,(\bar\beta\hat s+\hat\beta\bar s)}{\hat s}\,\(|X_0|(1-\kappa\, x_0\cdot \bar m)-f(\bar B,\bar C)\,(1-\kappa\, x\cdot \bar m)\)-s\,f(\bar B,\bar C)\,\frac{(1-(\bar\beta+\hat\beta))}{2}\,|x-x_0|^{2}\notag\\
&\leq
\bar s\,\(|X_0|(1-\kappa\, x_0\cdot \bar m)-f(\bar B,\bar C)\,(1-\kappa\, x\cdot \bar m)\)
+
K\,\bar s\,s\,\(|X_0|(1-\kappa\, x_0\cdot \bar m)-f(\bar B,\bar C)\,(1-\kappa\, x\cdot \bar m)\)\notag\\
&\qquad -s\,f(\bar B,\bar C)\,\frac{(1-(\bar\beta+\hat\beta))}{2}\,|x-x_0|^{2}.
\end{align}
To estimate the middle term in the last inequality we shall prove that for some $\delta>0$\begin{equation}\label{i}
K\,\bar s\,\big(|X_0|(1-\kappa\langle x_0,\bar m\rangle-h(x,\bar Y,X_0)(1-\kappa\langle x,\bar m\rangle)\big)
\leq 
\delta\,(1-\beta(\lambda))\,|x-x_0|^{2}\,h(x,\bar Y,X_0),
\end{equation}
where $h(x,\bar Y,X_0)=f(\bar B,\bar C)$.
From the definition of $K$ this inequality is equivalent to
\begin{equation}\label{iii}
\bar s\,\big(|X_0|(1-\kappa\langle x_0,\bar m\rangle-h(x,\bar Y,X_0)(1-\kappa\langle x,\bar m\rangle)\big)
\leq 
\dfrac{\delta}{\mu}\,|X_0|\,|x-x_0|^{2}\,h(x,\bar Y,X_0).
\end{equation}
Let 
\[
\Delta=|X_0|(1-\kappa\langle x_0,\bar m\rangle-h(x,\bar Y,X_0)(1-\kappa\langle x,\bar m\rangle).
\]
Writing $X=h(x,\bar Y,X_0)x$ with $\bar Y=X_0+\bar s\,\bar m$,
we have $|X|+\kappa|X-\bar Y|=|X_0|+\kappa|X_0-\bar Y|$, which after simplification implies that
\[
\Delta=\dfrac{\kappa^{2}|X-X_0|^{2}-(|X|-|X_0|)^{2}}{2\,\kappa\,\bar s}.
\]
By calculation, the right hand side of the last identity is equal to 
\[
\dfrac{|X||X_0||x-x_0|^{2}-(1-\kappa^{2})|X-X_0|^{2}}{2\,\kappa\,\bar s}\leq \dfrac{|X||X_0||x-x_0|^{2}}{2\,\kappa\,\bar s}=
\dfrac{h(x,\bar Y,X_0)|X_0||x-x_0|^{2}}{2\,\kappa\,\bar s}
\]
implying \eqref{i} with $\delta=\mu/(2\kappa)$.
Therefore inserting \eqref{i} in \eqref{keybis} yields
\begin{align*}
&s\,\(|X_0|(1-\kappa\, x_0\cdot m)-f(\bar B,\bar C)\,(1-\kappa\, x\cdot m)\)\\
&\leq
\bar s\,\Delta
-\dfrac12\,\(1-\dfrac{\mu}{\kappa}\)\,s\,f(\bar B,\bar C)\,\(1-\beta(\lambda)\)\,|x-x_0|^{2}.
\end{align*}
Therefore we have proved \eqref{eq:inequality with E} with 
\[
E=\(1-\dfrac{\mu}{\kappa}\)\,\dfrac{\kappa \,s\,f(\bar B,\bar C)\,(1-(\bar\beta+\hat\beta))\,|x-x_0|^{2}}{1-\kappa^{2}}.
\]
and consequently \eqref{eq:inequality with E first}.

Since $X_0$ is on both ovals $\mathcal O(Y,b),\mathcal O(\bar Y,\bar b)$, then by Lemma \ref{aux1}, $B\geq f(\bar B,\bar C)$ . So from \eqref{eq:inequality with E first} we can apply the last part of Lemma \ref{aux2} to get
\[
f(B,C)+\dfrac{E}{B+\sqrt{B^{2}-C}-f(\bar B,\bar C)}\leq f(\bar B,\bar C),
\]
that is,
\[
h(x,Y,X_0)+\dfrac{E}{B+\sqrt{B^{2}-C}-f(\bar B,\bar C)}\leq h(x,\bar Y,X_0).
\]

Finally,
to complete the proof of the lemma, we estimate $\dfrac{E}{B+\sqrt{B^{2}-C}-f(\bar B,\bar C)}$ from below.
We shall first prove that $1-(\bar\beta+\hat\beta)\geq C_{\kappa}\,\lambda\,(1-\lambda)\,|\bar m-\hat m|^{2}$.
In fact, 
\begin{align*}
1-(\bar\beta+\hat\beta)=1-\beta(\lambda)&=\dfrac{\kappa|\xi|^{2}+\langle x_0,\xi\rangle+\sqrt{\langle x_0,\xi\rangle^{2}-(1-\kappa^{2})|\xi|^{2}}}{\kappa|\xi|^{2}}\\
&=
\dfrac{\(\kappa|\xi|^{2}+\langle x_0,\xi\rangle\)^{2}-\(\langle x_0,\xi\rangle^{2}-(1-\kappa^{2})|\xi|^{2}\)}{\kappa|\xi|^{2}\,\(\kappa|\xi|^{2}+\langle x_0,\xi\rangle-\sqrt{\langle x_0,\xi\rangle^{2}-(1-\kappa^{2})|\xi|^{2}}\)}.
\end{align*}
We have $(\kappa|\xi|^{2}+\langle x_0,\xi\rangle)^{2}-(\langle x_0,\xi\rangle^{2}-(1-\kappa^{2})|\xi|^{2})=|\xi|^{2}(\kappa^{2}|\xi|^{2}+2\kappa\langle x_0,\xi\rangle+1-\kappa^{2})=|\xi|^{2}(|\kappa\xi+x_0|^{2}-\kappa^{2})=|\xi|^{2}\kappa^{2}(|m_{\lambda}|^{2}-1)$.
Therefore 
\[
1-\beta(\lambda)=\dfrac{\kappa(1-|m_{\lambda}|^{2})}{-\kappa|\xi|^{2}-\langle x_0,\xi\rangle+\sqrt{\langle x_0,\xi\rangle^{2}-(1-\kappa^{2})|\xi|^{2}}}.
\]
Since $1-\beta(\lambda)> 0$ and $|m_{\lambda}|< 1$, for $0<\lambda <1$, it follows that $\Delta:=-\kappa|\xi|^{2}-\langle x_0,\xi\rangle+\sqrt{\langle x_0,\xi\rangle^{2}-(1-\kappa^{2})|\xi|^{2}})>0$ and since $|\xi|\leq 1+(1/\kappa)$, 
$\Delta$ is bounded above by a constant depending only on $\kappa$.
Since $1-|m_{\lambda}|^{2}=\lambda\,(1-\lambda)\,|\bar m-\hat m|^{2}$, the desired lower bound for $1-(\bar\beta+\hat\beta)$ follows.

Next, we show that $f(\bar B,\bar C)$ is bounded below by a structural constant.  In fact, from \cite[first identity in (4.7)]{gutierrez-huang:nearfieldrefractor}, $f(\bar B,\bar C)=h(x,\bar Y,X_0)\geq \dfrac{\bar b-\kappa\, |\bar Y|}{1+\kappa}$ for all $x\in S^{n-1}$ where 
$\bar b=|X_0|+\kappa\,|\bar Y-X_0|$. So $\dfrac{\bar b-\kappa\, |\bar Y|}{1+\kappa}\geq \dfrac{1-\kappa}{1+\kappa}|X_0|\geq c_1\dfrac{1-\kappa}{1+\kappa}$, since $X_0\in \Gamma_{c_1c_2}$.
Thus,
\begin{align*}
sf(\bar B,\bar C)(1-(\bar\beta+\hat\beta))|x-x_0|^{2}&\geq C\,s\lambda(1-\lambda)|\bar m-\hat m|^{2}|x-x_0|^{2}\\
&\geq
C\,s\lambda(1-\lambda)|\bar Y-\hat Y|^{2}|x-x_0|^{2}\quad \text{from \ref{item:hypotheses A}(c)}
\end{align*}
with $C>0$ a structural constant.

It remains to estimate $B+\sqrt{B^{2}-C}-f(\bar B,\bar C)$ from above. 
We have from \eqref{eq:expression for b -kappa2xcdotY} that $B+\sqrt{B^{2}-C}-f(\bar B,\bar C)\leq B+\sqrt{B^{2}-C}\leq 2B\leq C\,\(|X_0|+\kappa \,s\)$. Since $s=s_{X_0}(m)=|Y-X_0|$ we obtain from \ref{item:old hypotheses C} that $|X_0|+\kappa \,s\leq C\,s$ with a structural constant $C>0$.
Therefore
\[
\dfrac{E}{B+\sqrt{B^{2}-C}-f(\bar B,\bar C)}\geq C\,\lambda\,(1-\lambda)|\bar Y-\hat Y|^{2}|x-x_0|^{2}
\]
for $0<\lambda <1$ with $C>0$ a structural constant (since $\mu<\kappa$). The proof of the lemma is then complete.
\end{proof}

\section{Estimates for derivatives of ovals}\label{sec:estimates for derivatives of ovals}
\setcounter{equation}{0}
We analyze now the derivatives of the function $h(x,Y,X_0)$ for $Y\in\Sigma$ and $X_0\in\Gamma_{c_1c_2}$.
To differentiate the function $h$ with respect to the variables $x$ and $Y$ we will extend $h(x,Y,X_0)$ 
for $x$ in a neighborhood of the unit ball and $Y$ in a neighborhood of $\Sigma$. 
In order to do this, we first need to bound from below the quantity inside the square root in \eqref{eq:polar radius of the oval}.

\begin{lemma}\label{aux4}

Let $X_0\in \Gamma_{c_1c_2}$. There exist $\epsilon>0$ sufficiently small depending only on $\kappa$ and constants 
$C_0,C_1$ depending only on $\kappa$ and $c_2$ such that if $b=|X_0|+\kappa|Y-X_0|<|Y|$ and $|Y|\geq C_1$ then
\begin{equation}\label{eq:lower estimated of Delta in square root}
\(b-\kappa^{2}\, x\cdot Y\)^{2}-\(1-\kappa^{2}\)\(b^{2}-\kappa^{2}|Y|^{2}\)\geq C_0\qquad \text{for all $|x|\leq 1+\epsilon$}.
\end{equation}
Then by continuity there is a small neighborhood $V$ of $Y$ such that \eqref{eq:lower estimated of Delta in square root}
holds for all $Y\in V$ with a smaller positive constant $C$. 
This implies that under this configuration, the formula defining $h$ in \eqref{eq:definition of function h} can be extended for $|x|\leq 1+\epsilon$ and $Y\in V$.

\end{lemma}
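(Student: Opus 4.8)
The plan is to obtain a uniform lower bound for the discriminant
$\Delta(x,Y) := (b-\kappa^2 x\cdot Y)^2 - (1-\kappa^2)(b^2 - \kappa^2|Y|^2)$
by expanding it in a form that separates a manifestly positive ``main term'' from lower-order corrections that are small relative to $|Y|^2$. First I would write $b = |X_0| + \kappa s$ with $s = |Y - X_0|$, and use the identities \eqref{eq:expression for b -kappa2xcdotY} and \eqref{eq:expression for b2 -kappa2Y2} (which were derived for $|x|=1$ but whose algebraic form persists; for $|x|\le 1+\epsilon$ one keeps the extra terms controlled by $\epsilon$). The cleanest route, though, is to recall from \cite[Sec. 4]{gutierrez-huang:nearfieldrefractor} the standard factorization of the discriminant of the oval polar equation, which expresses $\Delta$ as a product of two linear-in-$|Y|$ type factors or, after completing the square, as $(1-\kappa^2)\big[(\text{something})^2 + \kappa^2(b^2 - \kappa^2|Y|^2) \cdot(\ldots)\big]$; in any case $\Delta \ge 0$ on $S^{n-1}$ exactly when the oval is well-defined, and the content here is quantitative.

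The key steps, in order: (1) Fix $x$ with $|x| = 1$ first. Using $b < |Y|$ and the geometric setup (the refraction inequality $x_0\cdot m \ge \kappa$, hence $b - \kappa^2 x\cdot Y \ge c\,|Y|$ and $b^2 - \kappa^2|Y|^2 \ge 0$ with a gap), show $\Delta(x,Y) \ge c_\kappa |Y|^2 - C_\kappa' |Y|$ for structural constants; then for $|Y| \ge C_1$ large enough the right side is $\ge C_0 > 0$. Concretely, since $b - \kappa^2 x\cdot Y = |X_0|(1 - \kappa^2 x\cdot x_0) + \kappa s(1 - \kappa x\cdot m) \ge \kappa s(1-\kappa)$ and $s \ge |Y| - |X_0| \ge |Y| - c_2$, the first term of $\Delta$ is $\ge \kappa^2(1-\kappa)^2(|Y|-c_2)^2$, while $(1-\kappa^2)(b^2 - \kappa^2|Y|^2) = (1-\kappa^2)\big[(1-\kappa^2)|X_0|^2 + 2\kappa s|X_0|(1-\kappa x_0\cdot m)\big] \le C(\kappa, c_2)(1 + |Y|)$ since $|X_0| \le c_2$; so $\Delta \ge \kappa^2(1-\kappa)^2(|Y|-c_2)^2 - C(\kappa,c_2)(1+|Y|)$, which exceeds a fixed $C_0$ once $|Y| \ge C_1(\kappa, c_2)$. (2) Extend to $|x| \le 1 + \epsilon$: write $x = \theta x'$ with $|x'| = 1$ and $|\theta - 1| \le \epsilon$, or more simply estimate $|\Delta(x,Y) - \Delta(x/|x|,Y)|$; all the terms involving $x$ are polynomial with coefficients bounded by $C(\kappa,c_2)(1 + |Y| + |Y|^2)$, so the perturbation is at most $C\epsilon(1 + |Y|)^2$, which can be absorbed by halving the quadratic gain from step (1) provided $\epsilon$ is small depending only on $\kappa$ (the coefficient of $|Y|^2$ in the gain, $\kappa^2(1-\kappa)^2$, depends only on $\kappa$). (3) Continuity in $Y$: $\Delta$ is continuous (indeed smooth) jointly in $(x,Y)$ on the compact set $\{|x| \le 1+\epsilon\}$, so the strict inequality $\Delta \ge C_0$ on that compact set for the fixed $Y$ passes to a neighborhood $V$ of $Y$ with constant $C_0/2$; note $b$ also varies with $Y$ but continuously. (4) Conclude that $\sqrt{\Delta}$ is smooth and bounded below on $\{|x| \le 1+\epsilon\} \times V$, hence formula \eqref{eq:polar radius of the oval}, and therefore $h$ via \eqref{eq:definition of function h}, extends as a smooth function there.

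The main obstacle I anticipate is bookkeeping the $|x| \le 1+\epsilon$ perturbation cleanly: the identities \eqref{eq:expression for b -kappa2xcdotY}--\eqref{eq:expression for b2 -kappa2Y2} are stated for unit $x$, so one must either re-derive the off-sphere expansion or, preferably, argue abstractly that $\Delta(\cdot,Y)$ is a fixed polynomial in the coordinates of $x$ whose coefficients are bounded by structural constants times $(1+|Y|)^2$, so that $\epsilon$-perturbation of $x$ changes $\Delta$ by at most $C(\kappa,c_2)\,\epsilon\,(1+|Y|)^2$. Since the genuinely positive part of $\Delta$ grows like $\kappa^2(1-\kappa)^2 |Y|^2$ with a $\kappa$-only coefficient, choosing $\epsilon = \epsilon(\kappa)$ small makes the correction at most half the main term, and then $C_1, C_0$ depend only on $\kappa$ and $c_2$ as claimed. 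Everything else is routine: the quadratic-beats-linear estimate of step (1) and the compactness/continuity arguments of steps (3)--(4).
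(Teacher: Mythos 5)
Your plan is correct and arrives at the same conclusion by essentially the same strategy as the paper: both rest on the observation that on the unit sphere the discriminant $\Delta$ grows like $\kappa^2(1-\kappa)^2|Y|^2$ (with the quadratic coefficient depending only on $\kappa$) while all competing terms are at most linear in $|Y|$ with $c_2$-dependent constants, so requiring $|Y|\geq C_1(\kappa,c_2)$ produces a uniform positive gap, and then $\epsilon$ needs only to be small depending on $\kappa$. The one organizational difference is in how the off-sphere extension $|x|\leq 1+\epsilon$ is handled. You treat it as an additive perturbation of the unit-sphere case, bounding the change in $\Delta$ by $C\epsilon(1+|Y|)^2$. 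The paper instead observes that $\Delta$ depends on $x$ only through $t=x\cdot Y$, that $\Delta(t)$ is decreasing on $(-\infty, b/\kappa^2)$, and that $\kappa(1+\epsilon)<1$ forces $[-(1+\epsilon)|Y|,(1+\epsilon)|Y|]\subset(-\infty, b/\kappa^2)$; this reduces the whole problem to evaluating $\Delta$ at the single endpoint $t=(1+\epsilon)|Y|$, avoiding any perturbation bookkeeping. Both are valid; the paper's monotonicity reduction is slightly cleaner, while your perturbation argument is more generic and would transfer to situations where the one-variable reduction is unavailable. Your intermediate bound $(b-\kappa^2 x\cdot Y)^2\geq\kappa^2(1-\kappa)^2 s^2$ is also a correct alternative to the paper's estimate $|Y|-b\geq(1-\kappa)|Y|-(1+\kappa)|X_0|$. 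One small clean-up worth doing if you write this out in full: in step (1) you invoke "the refraction inequality $x_0\cdot m\geq\kappa$," but the lemma is stated under only $b<|Y|$ and $|Y|\geq C_1$, and in fact your concrete estimate never uses $x_0\cdot m\geq\kappa$ (only $|x|\leq 1$ and $|m|=1$), so that appeal should be dropped.
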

\begin{proof}
By calculation 
\begin{equation}\label{eq:definition of Deltat}
\Delta(t):=\(b-\kappa^{2}\, t\)^{2}-\(1-\kappa^{2}\)\(b^{2}-\kappa^{2}|Y|^{2}\)
=
\kappa^{2}\((b- t)^{2}
+(1-\kappa^{2})\(|Y|^{2}-t^{2}\)\).
\end{equation}
From the estimate for the ovals \cite[first identity in (4.7)]{gutierrez-huang:nearfieldrefractor}, $|X_0|=h(x_0,Y,X_0)\geq \dfrac{b-\kappa\, |Y|}{1+\kappa}$, so $b\leq \kappa\,|Y|+(1+\kappa)|X_0|$.
Therefore $|Y|-b\geq (1-\kappa)\,|Y|-(1+\kappa)\,|X_0|$. Clearly, the last quantity is non negative if
$|Y|\geq \dfrac{1+\kappa}{1-\kappa}|X_0|$.

We have $\displaystyle \min_{|x|\leq 1+\epsilon}\Delta(x\cdot Y)=\min_{-(1+\epsilon)|Y|\leq t \leq (1+\epsilon)|Y|}\Delta(t)$. The function $\Delta(t)$ is decreasing in the interval $\(-\infty,b/\kappa^2\)$. Let $\epsilon>0$ be such that $\kappa\,(1+\epsilon)<1$. Since $b>\kappa\,|Y|$ we then have $\left[-(1+\epsilon)|Y|, (1+\epsilon)|Y|\right]
\subset \(-\infty,b/\kappa^2\)$. Therefore 
\[
\min_{-(1+\epsilon)|Y|\leq t \leq (1+\epsilon)|Y|}\Delta(t)=\Delta\((1+\epsilon)|Y|\)
\]
Let us estimate $\Delta\((1+\epsilon)|Y|\)$ from below: 
\begin{align*}
&\Delta((1+\epsilon) |Y|)\\
&=\kappa^2\,\(\(b- (1+\epsilon)|Y|\)^2+(1-\kappa^2)|Y|^2\(1-(1+\epsilon)^2\)\)\\
%&=\kappa^2\,\(\(b\pm (1+\epsilon)|Y|\)^2-\epsilon (2+\epsilon)(1-\kappa^2)|Y|^2\)\\
%&=
%\kappa^2\,\(\(b- (1+\epsilon)|Y|\)^2-\epsilon (2+\epsilon)(1-\kappa^2)|Y|^2\)\\
%&=\kappa^2\,
%\(b^2-2\,b\,(1+\epsilon)\,|Y|+(1+\e)^2 \,|Y|^2-\epsilon\, (2+\epsilon)\,(1-\kappa^2)\,|Y|^2 \)\\
%&=\kappa^2\,
%\(b^2-2\,b\,(1+\epsilon)\,|Y|+\((1+\e)^2 -\epsilon\, (2+\epsilon)\,(1-\kappa^2)\)\,|Y|^2 \)\\
%&=\kappa^2\,
%\(b^2-2\,b\,(1+\epsilon)\,|Y|+\(1+\kappa^2\,\e\,(2+\e)\)\,|Y|^2 \)\\
&=\kappa^2\,
\(b^2-2\,b\,|Y|-2\,b\,\e\,|Y|+\(1+\kappa^2\,\e\,(2+\e)\)\,|Y|^2 \)\\
&=\kappa^2\,
\(\(|Y|-b\)^2+\kappa^2\,\e\,(2+\e)\,|Y|^2-2\,b\,\e\,|Y| \)\\
&\geq \kappa^2\,
\(\((1-\kappa)\,|Y|-(1+\kappa)\,|X_0|\)^2+\kappa^2\,\e\,(2+\e)\,|Y|^2-2\,\(\kappa\,|Y|+(1+\kappa)|X_0|\)\,\e\,|Y| \)\\
&=
\kappa^2\((1-\kappa)^2\,|Y|^2-2(1-\kappa^2)|Y|\,|X_0|+(1+\kappa)^2\,|X_0|^2+\kappa^2\,\e\,(2+\e)\,|Y|^2-
2\,
\kappa\,\epsilon\,|Y|^2
-
2\,(1+\kappa) \e\,|X_0|\,|Y|\)\\
&=
\kappa^2\(\((1-\kappa)^2+\kappa^2\,\e\,(2+\e)-2\,
\kappa\,\epsilon\)\,|Y|^2 - 2\(1-\kappa^2+(1+\kappa) \e\)\,|Y|\,|X_0| +(1+\kappa)^2\, |X_0|^2 \)\\
&=
\kappa^2\(\alpha_1\,|Y|^2 - 2\,\alpha_2\,|Y|\,|X_0| +(1+\kappa)^2\, |X_0|^2 \).
\end{align*}
From the choice of $\epsilon$, $\alpha_2\leq (1-\kappa^2)\dfrac{1+\kappa}{\kappa}:=\beta_2$ and taking $\epsilon$ small we have $\alpha_1\geq (1-\kappa)^2/2:=\beta_1$.
Hence
\begin{align*}
\Delta((1+\epsilon) |Y|)&
\geq
\kappa^2\(\beta_1\,|Y|^2 - 2\,\beta_2\,|Y|\,|X_0| +(1+\kappa)^2\, |X_0|^2 \)\\
&\geq
\kappa^2 \,\(\beta_1\,|Y|^2 - \beta_2\,\(\delta\,|Y|^2+\dfrac{|X_0|^2}{\delta}\) +(1+\kappa)^2\, |X_0|^2\)\\
&=
\kappa^2 \,\(\(\beta_1-\delta\,\beta_2\)\,|Y|^2 - \(\dfrac{\beta_2}{\delta}-(1+\kappa)^2\)\, |X_0|^2\),\qquad \delta>0. 
\end{align*}
We now choose $\delta>0$ sufficiently small depending only on $\kappa$ such that
\[
\(\beta_1-\delta\,\beta_2\)\,|Y|^2 - \(\dfrac{\beta_2}{\delta}-(1+\kappa)^2\)\, |X_0|^2
\geq 
C_1(\kappa)\,|Y|^2 - C_2(\kappa)\, |X_0|^2,
\]
for some $C_i$ positive constants. 
Thus
\[
\Delta((1+\epsilon) |Y|)\geq \(\sqrt{C_1(\kappa)}\,|Y| + \sqrt{C_2(\kappa)}\, |X_0|\)\(\sqrt{C_1(\kappa)}\,|Y| - \sqrt{C_2(\kappa)}\, |X_0|\),
\]
and the desired inequality follows.

\end{proof}

With Lemma \ref{aux4} in hand we proceed to prove estimates for $h$ and its derivatives.

\begin{lemma}\label{aux5}
There exists a structural constant $C>0$   
such that
if $Y\in\Sigma$, $t>0$ and $(1+t)X_0\in \Gamma_{c_1c_2}$, then $0\leq h(x,Y,(1+t)X_0)-h(x,Y,X_0)\leq C\,t\,|X_0|.$
\end{lemma}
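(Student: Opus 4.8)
The plan is to reduce the statement to a monotonicity-with-Lipschitz-control property of the polar radius $\rho(x,Y,b)$ of \eqref{eq:polar radius of the oval} as a function of the parameter $b$. First I would record that, by the definition \eqref{eq:definition of function h} of $h$, we have $h(x,Y,X_0)=\rho(x,Y,b_0)$ and $h(x,Y,(1+t)X_0)=\rho(x,Y,b_1)$, where $b_0=|X_0|+\kappa|Y-X_0|$ and $b_1=(1+t)|X_0|+\kappa|Y-(1+t)X_0|$. Two elementary triangle-inequality estimates then control $b_1-b_0$: since $\big||Y-(1+t)X_0|-|Y-X_0|\big|\le t|X_0|$,
\[
(1-\kappa)\,t\,|X_0|\ \le\ b_1-b_0\ \le\ (1+\kappa)\,t\,|X_0|,
\]
so in particular $b_1\ge b_0$. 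I would also note that $b_0,b_1\in(\kappa|Y|,|Y|)$: the lower bound follows from $|Y-X_0|\ge |Y|-|X_0|$ together with $|X_0|\ge c_1>0$; the upper bound follows from \ref{item:hypotheses A}(a), because the refraction inequality \eqref{refraction} at the point $X_0\in\mathcal O(Y,b_0)$ (and likewise at $(1+t)X_0$, whose direction is again $x_0$) is equivalent, by the equation of the oval, to $x_0\cdot Y\ge b_0$, whence $b_0\le x_0\cdot Y\le |Y|$. Thus for every $b\in[b_0,b_1]$ the oval $\mathcal O(Y,b)$ is a genuine closed surface enclosing the origin and $\rho(x,Y,b)$ is well defined and smooth in $b$ for each $x\in S^{n-1}$, the discriminant in \eqref{eq:polar radius of the oval} being strictly positive there (cf. the computation in Lemma \ref{aux4}).

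Next I would establish that $b\mapsto\rho(x,Y,b)$ is nondecreasing with derivative bounded by $1/(1-\kappa)$. Writing $X=\rho(x,Y,b)\,x$, the defining relation $\rho+\kappa|\rho x-Y|=b$ and the implicit function theorem give
\[
\frac{\partial\rho}{\partial b}=\frac{1}{\,1+\kappa\,\dfrac{(\rho x-Y)\cdot x}{|\rho x-Y|}\,},
\]
and since $\big|(\rho x-Y)\cdot x\big|\le|\rho x-Y|$ the denominator lies in $[\,1-\kappa,\,1+\kappa\,]$, so $0<\partial_b\rho\le 1/(1-\kappa)$. (Equivalently, the convex bodies $\{|X|+\kappa|X-Y|\le b\}$ increase with $b$, which yields the monotonicity with no differentiation.) Integrating in $b$ from $b_0$ to $b_1$ and combining with the bound on $b_1-b_0$,
\[
0\ \le\ h(x,Y,(1+t)X_0)-h(x,Y,X_0)=\int_{b_0}^{b_1}\frac{\partial\rho}{\partial b}(x,Y,b)\,db\ \le\ \frac{1}{1-\kappa}\,(b_1-b_0)\ \le\ \frac{1+\kappa}{1-\kappa}\,t\,|X_0|,
\]
which is the claim, with structural constant $C=(1+\kappa)/(1-\kappa)$.

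I expect the only delicate point to be the uniform (in $x\in S^{n-1}$) well-definedness and smoothness of $\rho(x,Y,b)$ for $b$ ranging over $[b_0,b_1]$, in particular excluding the degenerate configuration $x=\pm Y/|Y|$, $b=|Y|$; this is precisely where the structural assumptions \ref{item:hypotheses A} and \ref{item:old hypotheses C} enter, through $b_0,b_1\in(\kappa|Y|,|Y|)$ and $|Y|>|X_0|$ (the latter being a consequence of \ref{item:old hypotheses C}, since $|X_0|/|Y-X_0|\le C(\kappa)<1$). If one prefers to avoid the implicit-function argument near those directions, one may instead differentiate the explicit formula \eqref{eq:polar radius of the oval} in $b$ directly, using the strict positivity of the discriminant supplied by the proof of Lemma \ref{aux4}. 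This bookkeeping, rather than any substantive estimate, is the main obstacle.
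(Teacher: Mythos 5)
Your proof is correct, and it takes a genuinely different route than the paper's. The paper manipulates the explicit formula $h=\frac{1}{1-\kappa^2}\bigl(b-\kappa^2 x\cdot Y-\sqrt{Q(b)}\bigr)$, computes $Q(0)-Q(t)$, and must invoke Lemma \ref{aux4} to bound the denominator $\sqrt{Q(0)}+\sqrt{Q(t)}$ away from zero; your approach instead differentiates the defining relation $\rho+\kappa|\rho x-Y|=b$ implicitly, obtaining
\[
\frac{\partial\rho}{\partial b}=\frac{1}{1+\kappa\,\dfrac{(\rho x-Y)\cdot x}{|\rho x-Y|}}\in\left[\frac{1}{1+\kappa},\,\frac{1}{1-\kappa}\right],
\]
a bound that is uniform, makes no use of Lemma \ref{aux4}, and yields the explicit constant $C=\frac{1+\kappa}{1-\kappa}$. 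This is a cleaner and more self-contained argument: since the map $\rho\mapsto \rho+\kappa|\rho x-Y|$ has derivative $\geq 1-\kappa>0$ for all $\rho>0$, the implicit function theorem applies without any additional structural input beyond $\kappa<1$. (You are also right that the sublevel sets $\{|X|+\kappa|X-Y|\leq b\}$ are convex and monotone in $b$, giving the sign of the difference for free.) Two small remarks: first, the degenerate configuration $x=Y/|Y|$, $b=|Y|$ you flag is actually harmless, since the one-sided limit of $\partial_b\rho$ along that ray is $1/(1-\kappa)$, and in any case the integral bound is insensitive to a single endpoint; second, the inference ``$|X_0|/|Y-X_0|\leq C(\kappa)<1$ implies $|Y|>|X_0|$'' needs one more line (e.g.\ $C(\kappa)<1/2$ gives $|Y|\geq|Y-X_0|-|X_0|\geq|X_0|$), but the conclusion is correct. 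Neither point affects the validity of the proof.
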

\begin{proof}
If $b(t)=(1+t)|X_0|+\kappa|Y-(1+t)X_0|$, then $0\leq b(t)-b(0)\leq (1+\kappa)t|X_0|$.
\footnote{We can see $b(0)\leq b(t)$ for $t>0$ because this is equivalent to $(1+t)|X_0|+\kappa|Y-(1+t)X_0|\geq |X_0|+\kappa|Y-X_0|$ which is equivalent to show $t|X_0|+\kappa|Y-(1+t)X_0|\geq \kappa|Y-X_0|$. But by triangle inequality $\kappa|Y-(1+t)X_0|\geq \kappa |Y-X_0|-\kappa t|X_0|$ which implies
$t|X_0|+\kappa|Y-(1+t)X_0|\geq t|X_0|+\kappa |Y-X_0|-\kappa t|X_0|= (1-\kappa)t|X_0|+\kappa |Y-X_0|>\kappa |Y-X_0|$ since $\kappa<1$.}
Let $Q(t)=(b(t)-\kappa^{2}\, x\cdot Y)^{2}-(1-\kappa^{2})(b(t)^{2}-\kappa^2\,|Y|^{2})$.
We have
\begin{align*}
Q(0)-Q(t)
%&=(b(0)-\kappa^{2}\, x\cdot Y)^{2}-(1-\kappa^{2})(b(0)^{2}-\kappa^2\,|Y|^{2})-(b(t)-\kappa^{2}\, x\cdot Y)^{2}+(1-\kappa^{2})(b(t)^{2}-\kappa^2\,|Y|^{2})\\
&=
(b(0)-\kappa^{2}\, x\cdot Y)^{2}-(b(t)-\kappa^{2}\, x\cdot Y)^{2}-(1-\kappa^{2})(b(0)^{2}-b(t)^{2})\\
&=
\kappa^2\,(b(0)^{2}-b(t)^{2})-2\,\kappa^2\,x\cdot Y\,\(b(0)-b(t)\)\\
&=
\kappa^2\(b(0)-b(t)\)\,\(b(0)+b(t)-2\,x\cdot Y\).
\end{align*}
From the definition of $h$
\begin{align*}
&h(x,Y,(1+t)X_0)-h(x,Y,X_0)\\&=\dfrac{b(t)-\kappa^2\,x\cdot Y-\sqrt{Q(t)}}{1-\kappa^2}-\dfrac{b(0)-\kappa^2\,x\cdot Y-\sqrt{Q(0)}}{1-\kappa^2}\\
&=\dfrac{b(t)-b(0)+\sqrt{Q(0)}-\sqrt{Q(t)}}{1-\kappa^2}\\
&=
\dfrac{1}{1-\kappa^2}\(\dfrac{\(b(t)-b(0)\)\(\sqrt{Q(0)}+\sqrt{Q(t)}\)+Q(0)-Q(t)}{\sqrt{Q(0)}+\sqrt{Q(t)}} \)\\
&=
\dfrac{1}{1-\kappa^2}\(\dfrac{\(b(t)-b(0)\)\(\sqrt{Q(0)}+\sqrt{Q(t)}\)+\kappa^2\(b(0)-b(t)\)\,\(b(0)+b(t)-2\,x\cdot Y\)}{\sqrt{Q(0)}+\sqrt{Q(t)}} \)\\
&=\dfrac{(b(t)-b(0))\,\(\sqrt{Q(t)}+\sqrt{Q(0)}+\kappa^{2}\(b(0)+b(t)-2\,x\cdot Y\)\)}{\(1-\kappa^2\)\(\sqrt{Q(t)}+\sqrt{Q(0)}\)}.
\end{align*}
Since $\rho(x,Y,b)$ is increasing in $b$ and $b(0)<b(t)$, it follows from \eqref{eq:definition of function h} that $h(x,Y,X_0)\leq h(x,Y,(1+t)X_0)$.
From Lemma \ref{aux4} the denominator in the last string of expressions is bounded away from zero and we obtain
\[
0\leq h(x,Y,(1+t)X_0)-h(x,Y,X_0)\leq C\,(b(t)-b(0))\leq C\,t\,|X_0|.
\]
\end{proof}
\begin{lemma}\label{aux6}
Suppose \ref{item:hypotheses A} and \ref{item:old hypotheses C} hold.
There exist a structural constant $C>0$ such that if $\bar Y,Y\in\Sigma$ and $X_0\in \Gamma_{c_1c_2}$, then $|\nabla_{x}h(x_0,Y,X_0)-\nabla_{x}h(x_0,\bar Y,X_0)|\leq C\,|Y-\bar Y|$. 
\end{lemma}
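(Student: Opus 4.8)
The plan is to compute $\nabla_x h(x_0,Y,X_0)$ explicitly from the polar formula \eqref{eq:polar radius of the oval}–\eqref{eq:definition of function h}, regarded as a function of $x$ near $x_0$ via the extension provided by Lemma \ref{aux4}, and then show this expression depends Lipschitz-continuously on $Y\in\Sigma$. Recall $h=B-\sqrt{B^2-C}$ where, writing $Y=X_0+s\,m$ with $s=s_{X_0}(m)$ and using \eqref{eq:expression for b -kappa2xcdotY}, $B=B(x,Y)$ is affine in $x$ (namely $(1-\kappa^2)B = |X_0|(1-\kappa^2\,x\cdot x_0)+\kappa s(1-\kappa\,x\cdot m)$), and $C=C(Y)$ is independent of $x$. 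Hence $\nabla_x h = \nabla_x B\cdot\bigl(1 - B/\sqrt{B^2-C}\bigr)$, with $\nabla_x B$ a constant vector equal to $-\dfrac{\kappa^2|X_0|\,x_0 + \kappa^2 s\,m}{1-\kappa^2} = -\dfrac{\kappa^2}{1-\kappa^2}(|X_0|x_0 + s m) = -\dfrac{\kappa^2}{1-\kappa^2}Y$. So in fact $\nabla_x h(x,Y,X_0) = -\dfrac{\kappa^2}{1-\kappa^2}\,Y\,\Bigl(1-\dfrac{B}{\sqrt{B^2-C}}\Bigr)$, and everything reduces to controlling the scalar factor $g(x,Y):=1-B/\sqrt{B^2-C}$ at $x=x_0$.

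The key step is then a Lipschitz estimate for $Y\mapsto \bigl(Y,\; g(x_0,Y)\bigr)$ on $\Sigma$. For the prefactor $Y$ this is immediate. For $g$, note $g = 1 - B/\sqrt{B^2-C} = -\,\dfrac{\sqrt{B^2-C}\,(\,\sqrt{B^2-C}-B\,)}{B^2-C}\cdot\dfrac{1}{\ldots}$; more simply, write $g=\dfrac{\sqrt{B^2-C}-B}{\sqrt{B^2-C}} = -\dfrac{C}{\sqrt{B^2-C}\,(B+\sqrt{B^2-C})}$, which exhibits $g$ as a smooth function of $(B,C)$ wherever $B^2-C$ is bounded below away from zero and $B$ is bounded above. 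Both $B$ and $C$ are, by \eqref{eq:expression for b -kappa2xcdotY}–\eqref{eq:expression for b2 -kappa2Y2}, smooth (indeed affine or quadratic) in the entries of $Y$, with derivatives bounded by structural constants (here I use \ref{item:hypotheses A} to control $s = |Y-X_0|$ from above via $c_2$ and from below via \eqref{eqlower bound for s_X}, and $|X_0|\in[c_1,c_2]$). The quantity $B^2-C = \Delta(x_0\cdot Y)/(1-\kappa^2)^2$ in the notation of \eqref{eq:definition of Deltat} is bounded below by a structural constant: this is exactly the content of Lemma \ref{aux4} (whose hypothesis $|Y|\ge C_1$ and $b<|Y|$ is guaranteed on $\Gamma_{c_1c_2}\times\Sigma$ by \ref{item:old hypotheses C}, which forces $\Sigma$ far from the origin — cf. \eqref{eqlower bound for s_X} and the discussion following \ref{item:old hypotheses C}), together with the fact that $x_0\in S^{n-1}\subset\{|x|\le 1+\epsilon\}$. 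Therefore $g(x_0,\cdot)$ is the composition of a smooth map with bounded derivatives and a $C^\infty$ function on a region bounded away from its singular locus, hence Lipschitz in $Y$ with a structural constant; multiplying by the bounded Lipschitz prefactor $-\frac{\kappa^2}{1-\kappa^2}Y$ gives the claim.

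The main obstacle is the bookkeeping needed to certify that $B^2-C$ stays bounded below by a structural constant uniformly over $\bar Y,Y\in\Sigma$, $X_0\in\Gamma_{c_1c_2}$, and $x=x_0$; this is precisely why Lemma \ref{aux4} was isolated, and the proof here should simply quote it after checking its hypotheses hold under \ref{item:hypotheses A}–\ref{item:old hypotheses C}. A secondary technical point is to make sure the extension of $h$ off $|x|=1$ from Lemma \ref{aux4} is used only to justify differentiating in $x$; since we evaluate the gradient at $x=x_0\in S^{n-1}$, no further care is needed. Everything else — expanding $\nabla_x h$, differentiating $B$ and $C$ in $Y$, and assembling the product rule and chain rule estimates — is routine, with all constants tracking back to $c_1,c_2,\kappa$ and the Lipschitz constant $C$ in \ref{item:hypotheses A}(c).
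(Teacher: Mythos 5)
Your proposal follows the same basic route as the paper: compute $\nabla_x h$ at $x=x_0$ explicitly and then estimate its $Y$-dependence. The paper uses the simplified formula \eqref{eq:partial h with respect to xi at x0}, namely $\nabla_x h(x_0,Y,X_0)=\dfrac{\kappa|X_0|\,Y}{|Y-X_0|(1-\kappa\,x_0\cdot m)}$ (which comes from $h(x_0,Y,X_0)=|X_0|$ and $\sqrt{\Delta(x_0\cdot Y)}=\kappa s(1-\kappa\,x_0\cdot m)$), and then estimates the difference directly by adding and subtracting. Your version writes $\nabla_x h=-\frac{\kappa^2}{1-\kappa^2}\,Y\,g$ with $g=1-B/\sqrt{B^2-C}$ and invokes the chain rule; the two are consistent, since at $x_0$ one has $-\frac{\kappa^2}{1-\kappa^2}g(x_0,Y)=\frac{\kappa|X_0|}{|Y-X_0|(1-\kappa x_0\cdot m)}$.

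There are, however, two inaccuracies worth flagging. First, you claim $s=|Y-X_0|$ is controlled from above ``via $c_2$'': this is false — $c_2$ bounds $|X_0|$, not $|Y-X_0|$. To bound $s$ from above you would have to assume $\Sigma$ bounded (the paper does use $\diam\Sigma$ elsewhere, but not here). The paper's split $\nabla_x h(x_0,Y,X_0)-\nabla_x h(x_0,\bar Y,X_0)=\kappa|X_0|(A+B)$ with $A=\dfrac{Y-\bar Y}{|Y-X_0|(1-\kappa x_0\cdot m)}$ and $B=\bar Y\!\left(\dfrac{1}{|Y-X_0|(1-\kappa x_0\cdot m)}-\dfrac{1}{|\bar Y-X_0|(1-\kappa x_0\cdot\bar m)}\right)$ is sharper precisely because the factor $\bar Y$ pairs with $|\bar Y-X_0|$ in the denominator, so that $\frac{|\bar Y|}{|\bar Y-X_0|}\le 1+C(\kappa)$ via \ref{item:old hypotheses C}; by contrast your product-rule term $\bar Y\bigl(g(Y)-g(\bar Y)\bigr)$ does not exhibit this cancellation and genuinely needs $|\bar Y|$ bounded, and likewise the boundedness of $\partial g/\partial B$, $\partial g/\partial C$ requires $B,C$ bounded above. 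Second, $B$ and $C$ as functions of $Y$ (at fixed $x=x_0$) are not ``affine or quadratic'' — they involve $|Y-X_0|$ via $b$, so they are only smooth away from $Y=X_0$ (excluded by \eqref{eqlower bound for s_X}) with gradients bounded by structural constants, which is what you actually need. These are correctable imprecisions and the overall idea is sound, but the paper's direct telescoping is cleaner and avoids the unstated hypothesis that $\Sigma$ is bounded.
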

\begin{proof}
Let $Y=X_0+sm$ and $\bar Y=X_0+\bar s\bar m$ and $b=|X_0|+\kappa s$.
From Lemma \ref{aux4} we can take derivatives of $h$ with respect to $x$ for $x$ in a neighborhood of the unit ball, 
and by calculation
\begin{equation}\label{eq:partial h with respect to xi}
\frac{\partial h}{\partial x_{i}}(x,Y,X_0)
=\dfrac{\kappa^{2}h(x,Y,X_0)Y_{i}}{\sqrt{(b-\kappa^{2}\, x\cdot Y)^{2}-(1-\kappa^{2})(b^{2}-\kappa^2\,|Y|^{2})}}
=
\dfrac{\kappa^{2}h(x,Y,X_0)Y_{i}}{\sqrt{\Delta \(x\cdot Y\)}}.
\end {equation}
So at $x=x_0$
\begin{equation}\label{eq:partial h with respect to xi at x0}
\frac{\partial h}{\partial x_{i}}(x_0,Y,X_0)=\dfrac{\kappa^{2}|X_0|Y_{i}}{\sqrt{(b-\kappa^{2}\, x_0\cdot Y)^{2}-(1-\kappa^{2})(b^{2}-\kappa^2\,|Y|^{2})}}=
\dfrac{\kappa^{2}|X_0|Y_{i}}{\kappa s(1-\kappa\, x_0\cdot m)},
\end{equation}
since 
$
\sqrt{(b-\kappa^{2}\, x_0\cdot Y)^{2}-(1-\kappa^{2})(b^{2}-\kappa^2\,|Y|^{2})}=\kappa s(1-\kappa\, x_0\cdot m)
$
from \eqref{eq:expression for b -kappa2xcdotY} and \eqref{eq:expression for b2 -kappa2Y2}.

Therefore
\begin{align*}
&\nabla_{x}h(x_0,Y,X_0)-\nabla_{x}h(x_0,\bar Y,X_0)\\
&=\kappa|X_0|\,
\(\dfrac{Y}{|Y-X_0|(1-\kappa\, x_0\cdot m)}-\dfrac{\bar Y}{|\bar Y-X_0|(1-\kappa\, x_0\cdot \bar m)}\)\\
&=
\kappa|X_0|\,
\(\dfrac{Y-\bar Y}{|Y-X_0|(1-\kappa\, x_0\cdot m)}
+
\bar Y
\(\dfrac{1}{|Y-X_0|(1-\kappa\, x_0\cdot m)}
-\dfrac{1}{|\bar Y-X_0|(1-\kappa\, x_0\cdot \bar m)}\)\)\\
&=\kappa|X_0|\,\(A+B\).
\end{align*}
From \eqref{eqlower bound for s_X}, $|A|\leq C\,|Y-\bar Y|$.
In addition 
\begin{align*}
|B|&\leq C\,\left| |\bar Y-X_0|(1-\kappa\, x_0\cdot \bar m)-|Y-X_0|(1-\kappa\, x_0\cdot m)\right|\\
&=C\,\left| |\bar Y-X_0|-|Y-X_0| +\kappa\,|Y-X_0|\, x_0\cdot m-\kappa\, |\bar Y-X_0|\,x_0\cdot \bar m\right|\\
&= C\,\left| |\bar Y-X_0|-|Y-X_0|+\kappa \,|Y-X_0| \,x_0\cdot (m-\bar m)+\kappa\,x_0\cdot \bar m \(|Y-X_0|-|\bar Y-X_0|\)\right|\\
&\leq C\,|Y-\bar Y|
\end{align*}
since  $|m-\bar m|\leq C|Y-\bar Y|$ from \eqref{eq:difference of ms bounded by difference of Ys}.
\end{proof}
\begin{lemma}\label{aux7}
There exists a structural constant $M$ such that if $X_0\in\Gamma_{c_1c_2}$, $Y\in\Sigma$ and $x_0=X_0/|X_0|$, then 
\[
|h(x,Y,X_0)-h(x_0,Y,X_0)-\langle\nabla_{x} h(x_0,Y,X_0),x-x_0\rangle|\leq M|x-x_0|^{2}
\]
for all $x\in S^{n-1}$.
\end{lemma}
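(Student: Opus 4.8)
The plan is to establish this as a second-order Taylor estimate for the function $x\mapsto h(x,Y,X_0)$ on the sphere, with a constant on the Hessian term that is uniform over $Y\in\Sigma$ and $X_0\in\Gamma_{c_1c_2}$. The key point is that by Lemma \ref{aux4} the function $h$ extends, with the formula \eqref{eq:definition of function h}, to a full neighborhood $|x|\le 1+\epsilon$ of the unit ball, and there the quantity $\Delta(x\cdot Y)$ inside the square root is bounded below by a structural positive constant. Hence $h(\cdot,Y,X_0)$ is a smooth (real-analytic) function of $x$ in that neighborhood, and the estimate will follow from Taylor's theorem with remainder once I bound the second $x$-derivatives of $h$ uniformly.

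First I would reduce the sphere statement to the ambient one: writing $x=x_0+(x-x_0)$ with $|x|=|x_0|=1$, and applying Taylor's theorem to the Euclidean function $g(x)=h(x,Y,X_0)$ along the segment from $x_0$ to $x$ (which stays inside $\{|x|\le 1+\epsilon\}$ since both endpoints are unit vectors), one gets
\[
h(x,Y,X_0)-h(x_0,Y,X_0)-\langle\nabla_x h(x_0,Y,X_0),x-x_0\rangle
=\tfrac12\,\langle D^2_x h(\zeta,Y,X_0)(x-x_0),x-x_0\rangle
\]
for some $\zeta$ on that segment, so it suffices to show $\|D^2_x h(\zeta,Y,X_0)\|\le 2M$ for all such $\zeta$, $Y$, $X_0$. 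Then I would differentiate the explicit formula \eqref{eq:partial h with respect to xi}, namely
\[
\frac{\partial h}{\partial x_i}(x,Y,X_0)=\dfrac{\kappa^{2}\,h(x,Y,X_0)\,Y_i}{\sqrt{\Delta(x\cdot Y)}},
\]
once more in $x$. The second derivative involves (i) $\partial h/\partial x_j$ again, which is controlled since $h\le C$ by Lemma \ref{aux3} together with \eqref{eqlower bound for s_X} and \ref{item:old hypotheses C} (so $s_{X_0}(m)$, hence $|Y|$, is bounded above and below and $|Y|$, $|X_0|$ lie in structural ranges), and $\sqrt{\Delta}$ is bounded below by Lemma \ref{aux4}; and (ii) the derivative of $\Delta(x\cdot Y)^{-1/2}$, which produces $\Delta'(x\cdot Y)\,Y_j/\Delta(x\cdot Y)^{3/2}$ with $\Delta'(t)=2\kappa^2(t-b)$ from \eqref{eq:definition of Deltat}, again bounded in absolute value by a structural constant because $|t|=|x\cdot Y|\le (1+\epsilon)|Y|$, $b\le \kappa|Y|+(1+\kappa)|X_0|$ and $|Y|,|X_0|$ are structurally bounded. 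Assembling these bounds gives $|D^2_x h|\le M$ with $M$ a structural constant.

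The main obstacle is bookkeeping rather than conceptual: I must check that all the quantities entering the second derivative — $|Y|$, $|X_0|$, $h$, $\Delta$, $\Delta'$ — are bounded above and bounded away from the degenerate values, uniformly in $Y\in\Sigma$ and $X_0\in\Gamma_{c_1c_2}$. The upper bound on $|Y|$ is the one point needing care: it does not come from \ref{item:old hypotheses C} directly but from $|Y|\le |X_0|+s_{X_0}(m)\le c_2+ \sup s_{X_0}$, and $\sup s_{X_0}$ is finite by Lipschitz continuity \ref{item:hypotheses A}(c) together with compactness of the closure of the domains $E(X)$ (or, in the concrete target of the example, is simply explicit); the lower bound $|Y-X_0|\ge c_1/C(\kappa)$ is \eqref{eqlower bound for s_X}. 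Once these are in place the lemma follows from Taylor's theorem, since $\epsilon$, $C_0$ from Lemma \ref{aux4} and all the other constants are structural.
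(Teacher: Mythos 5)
Your proof is correct and follows the same route as the paper: extend $h$ to a neighborhood of the unit ball using Lemma \ref{aux4}, compute $D^2_x h$ by differentiating \eqref{eq:partial h with respect to xi}, bound it by a structural constant, and conclude via Taylor's theorem with second-order remainder along the segment from $x_0$ to $x$. Your extra remark about the upper bound on $|Y|$ (and hence on $\sup s_{X_0}$) is a valid and useful clarification of a point the paper leaves implicit in the word ``structural.''
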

\begin{proof}
We first calculate $\dfrac{\partial^2}{\partial x_j\partial x_i}$. From \eqref{eq:definition of Deltat} and \eqref{eq:partial h with respect to xi} 
\begin{align*}
&\frac{\partial^2 h}{\partial x_j \partial x_{i}}(x,Y,X_0)\\
&=
\dfrac{\partial }{\partial x_j}\(\kappa^{2}h(x,Y,X_0)Y_i\,\Delta(x\cdot Y)^{-1/2} \)\\
&=\kappa^4 \,h(x,Y,X_0)\,Y_i\,Y_j\,\Delta(x\cdot Y)^{-1}-\dfrac{\kappa^2}{2}\,h(x,Y,X_0)\,Y_i\,\Delta(x\cdot Y)^{-3/2}\dfrac{\partial \Delta}{\partial x_j}\\
&=\kappa^4 \,h(x,Y,X_0)\,Y_i\,Y_j\,\Delta(x\cdot Y)^{-1}-\dfrac{\kappa^2}{2}\,h(x,Y,X_0)\,Y_i\,\Delta(x\cdot Y)^{-3/2}(-2\,\kappa^2 (b-\kappa^2\,x\cdot Y)\,Y_j)\\
&=\kappa^4 \,h(x,Y,X_0)\,Y_i\,Y_j\,\Delta(x\cdot Y)^{-1}
\(1+\dfrac{b-\kappa^2\,x\cdot Y}{\sqrt{\Delta(x\cdot Y)}}\).
\end{align*}
From Lemma \ref{aux4} we obtain that $\left| \dfrac{\partial^2 h}{\partial x_j \partial x_{i}}(x,Y,X_0)\right|\leq C_1$ for all $x$ in a neighborhood of the unit ball $|x|\leq 1$.
From Taylor's formula
\[
h(x,Y,X_0)=h(x_0,Y,X_0)+\nabla_x h(x_0,Y,X_0)\cdot (x-x_0)
+
\dfrac12 \,\left\langle D^2_xh(\xi,Y,X_0)(x-x_0),x-x_0\right\rangle 
\]
with $\xi$ between $x_0$ and $x$.
The lemma then follows.
\end{proof}
\begin{lemma}\label{aux8}
Suppose \ref{item:hypotheses A} and \ref{item:old hypotheses C} hold.
There exists a structural constant $C>0$ such that if $X_0\in \Gamma_{c_1c_2}$ and $\bar Y,Y\in\Sigma$,  then
\[
|h(x,Y,X_0)-h(x,\bar Y,X_0)|\leq C\,|Y-\bar Y|\,|x-x_0|,
\]
for $x\in S^{n-1}$.
\end{lemma}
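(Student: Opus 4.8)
My plan is to exploit the fact that $X_0$ lies on both ovals $\mathcal O(Y,b)$ and $\mathcal O(\bar Y,\bar b)$, so that $h(x_0,Y,X_0)=h(x_0,\bar Y,X_0)=|X_0|$ and hence the function $\phi(x):=h(x,Y,X_0)-h(x,\bar Y,X_0)$ vanishes at $x=x_0$. For $x\in S^{n-1}$ the segment $\xi_t:=x_0+t(x-x_0)$, $t\in[0,1]$, lies in the closed unit ball, hence in the neighborhood of the unit ball where, by Lemma \ref{aux4}, both $h(\cdot,Y,X_0)$ and $h(\cdot,\bar Y,X_0)$ are defined and $C^1$ in the first variable. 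The fundamental theorem of calculus then gives
\[
\phi(x)=\phi(x)-\phi(x_0)=\int_0^1\langle\nabla_xh(\xi_t,Y,X_0)-\nabla_xh(\xi_t,\bar Y,X_0),\,x-x_0\rangle\,dt,
\]
so $|\phi(x)|\le|x-x_0|\sup_{0\le t\le1}|\nabla_xh(\xi_t,Y,X_0)-\nabla_xh(\xi_t,\bar Y,X_0)|$. It therefore suffices to prove
\[
|\nabla_xh(\xi,Y,X_0)-\nabla_xh(\xi,\bar Y,X_0)|\le C\,|Y-\bar Y|,\qquad Y,\bar Y\in\Sigma,
\]
with a structural constant $C$ uniform for $\xi$ in a fixed neighborhood of the unit ball; this is exactly Lemma \ref{aux6} at a general $\xi$ rather than only at $\xi=x_0$.

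To prove this I would recall from \eqref{eq:partial h with respect to xi} that $\nabla_xh(\xi,Y,X_0)=\dfrac{\kappa^2h(\xi,Y,X_0)}{\sqrt{\Delta(\xi\cdot Y)}}\,Y$, where $\Delta(\xi\cdot Y)=\kappa^2\bigl((b-\xi\cdot Y)^2+(1-\kappa^2)(|Y|^2-(\xi\cdot Y)^2)\bigr)$ and $b=|X_0|+\kappa|Y-X_0|$, and show that, as a function of $Y\in\Sigma$, this is a product of maps that are bounded and Lipschitz uniformly in $\xi$, hence Lipschitz with a structural constant. First, $\Delta(\xi\cdot Y)$ depends polynomially on $\xi\cdot Y$, $|Y|^2$ and $b$, each of which is bounded and Lipschitz in $Y$ on the bounded set $\Sigma$ with constants uniform for $|\xi|$ bounded, and $\Delta(\xi\cdot Y)$ is bounded below by a structural constant by Lemma \ref{aux4}; hence $Y\mapsto\Delta(\xi\cdot Y)^{-1/2}$ is bounded and Lipschitz. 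Second, by \eqref{eq:definition of B and C}, $h(\xi,Y,X_0)=B-\sqrt{B^2-C}$ with $B,C$ depending polynomially on $\xi\cdot Y$, $|Y|^2$ and $b$, while $\sqrt{B^2-C}=\sqrt{\Delta(\xi\cdot Y)}/(1-\kappa^2)$ stays bounded away from zero, so the square root is evaluated on a region where it is Lipschitz; thus $Y\mapsto h(\xi,Y,X_0)$ is bounded and Lipschitz, and since $\Sigma$ is bounded so is $Y\mapsto Y$. Equivalently, one may split
\[
\nabla_xh(\xi,Y,X_0)-\nabla_xh(\xi,\bar Y,X_0)=\frac{\kappa^2h(\xi,Y,X_0)}{\sqrt{\Delta(\xi\cdot Y)}}(Y-\bar Y)+\kappa^2\bar Y\Bigl(\frac{h(\xi,Y,X_0)}{\sqrt{\Delta(\xi\cdot Y)}}-\frac{h(\xi,\bar Y,X_0)}{\sqrt{\Delta(\xi\cdot\bar Y)}}\Bigr),
\]
exactly as in the proof of Lemma \ref{aux6}, and bound the two terms with the same ingredients.

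Feeding this back into the integral bound for $|\phi(x)|$ gives $|\phi(x)|\le C\,|Y-\bar Y|\,|x-x_0|$, which is the claim. The main (indeed only) obstacle is the uniformity in $\xi$: Lemma \ref{aux6} as stated covers only $\xi=x_0$, where one has the explicit value $\sqrt{\Delta(x_0\cdot Y)}=\kappa\,s_{X_0}(m)(1-\kappa\,x_0\cdot m)$, whereas at a general $\xi$ one must instead invoke the uniform lower bound on $\sqrt{\Delta}$ from Lemma \ref{aux4}. Apart from this substitution, the computation — together with the lower bound \eqref{eqlower bound for s_X} on $s_{X_0}$ and the boundedness of $\Sigma$ — carries over directly, all constants remaining structural.
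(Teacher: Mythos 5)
Your proof is correct, and it follows a genuinely different route from the paper's. The paper applies the mean value theorem first in $Y$ — writing $h(x,Y,X_0)-h(x,\bar Y,X_0)=\sum_k\frac{\partial h}{\partial Y_k}(x,\tilde Y,X_0)(Y_k-\bar Y_k)$ with $\tilde Y$ on the straight segment $\overline{\bar YY}$ — then uses $\nabla_Y h(x_0,\cdot,X_0)\equiv 0$ and a second mean value theorem in $x$ to land on $\frac{\partial^2h}{\partial Y_k\partial x_l}(\tilde x,\tilde Y,X_0)$. The price of that ordering is that $\tilde Y$ is typically \emph{not} in $\Sigma$, so Lemma~\ref{aux4} does not directly give the lower bound on $\Delta$ there; the paper spends the entire second half of its proof establishing $|\tilde Y|^2-b^2\geq C>0$ for points on the segment $\overline{\bar YY}$ before the second-derivative bounds become usable.

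Your proof reverses the order in a way that sidesteps this entirely. By observing that $\phi=h(\cdot,Y,X_0)-h(\cdot,\bar Y,X_0)$ vanishes at $x_0$ and applying the fundamental theorem of calculus along $\xi_t=x_0+t(x-x_0)$, the variable swept through an intermediate range is $x$, not $Y$; the segment $\{\xi_t\}$ stays in the closed unit ball, where Lemma~\ref{aux4} already gives a uniform lower bound for $\Delta$, while $Y$ and $\bar Y$ remain genuine points of $\Sigma$ throughout. The remaining task — showing $Y\mapsto\nabla_xh(\xi,Y,X_0)=\kappa^2 h(\xi,Y,X_0)\Delta(\xi\cdot Y)^{-1/2}\,Y$ is Lipschitz on $\Sigma$ uniformly in $|\xi|\leq1+\e$ — is then done by direct manipulation of the explicit formula (differences of products with bounded factors and denominators bounded below), not by a second mean value theorem, so no off-$\Sigma$ intermediate point ever appears. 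You correctly flagged that Lemma~\ref{aux6} as stated is pinned to $\xi=x_0$ (where $\sqrt{\Delta(x_0\cdot Y)}=\kappa s(1-\kappa\,x_0\cdot m)$ is explicit) and that at general $\xi$ one must fall back on the Lemma~\ref{aux4} lower bound; with that substitution your estimate goes through with structural constants. The net effect is a somewhat cleaner proof that avoids the most technical step of the paper's argument, though both ultimately rest on the same ingredient: a quantitative positive lower bound for $\Delta$ and the identity $h(x_0,\cdot,X_0)\equiv|X_0|$ that forces a vanishing at $x_0$.
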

\begin{proof}

Using Lemma \ref{aux4} we shall first estimate the derivatives of $h$ with respect to $Y_k$; $Y=(Y_1,\cdots ,Y_n)$.
Recall $h(x,Y,X_0)=\dfrac{1}{1-\kappa^2}\(b-\kappa^2\,x\cdot Y-\sqrt{\Delta(x\cdot Y, b, |Y|)}\)$ where 
$\Delta(t,b,|Y|)=\Delta(t)$ is given by \eqref{eq:definition of Deltat} and $b=|X_0|+\kappa\,|X_0-Y|$. By Lemma \ref{aux4}, $h$ can be differentiated with respect to $Y_k$ since is defined in an open neighborhood of the target $\Sigma$.
Then 
\begin{align*}
\dfrac{\partial h}{\partial Y_k}&=\dfrac{1}{1-\kappa^2}\(\dfrac{\partial b}{\partial Y_k}-\kappa^2\,x_k-\dfrac12 \,\Delta^{-1/2}\,\dfrac{\partial \Delta}{\partial Y_k} \).
\end{align*}
Now 
\[
\dfrac{\partial \Delta}{\partial Y_k}
=
2\(b-\kappa^2\,x\cdot Y\)\(\dfrac{\partial b}{\partial Y_k}-\kappa^2\,x_k\)
-\(1-\kappa^2\)\(2\,b\,\dfrac{\partial b}{\partial Y_k}-2\,\kappa^2\,Y_k\),
\]
and
$
\dfrac{\partial b}{\partial Y_k}
=
-\kappa\,\dfrac{X_0^k-Y_k}{|X_0-Y|}$.
We next differentiate \eqref{eq:partial h with respect to xi} with respect to $Y_k$. Recall that from Lemma \ref{aux4}, the right hand side of \eqref{eq:partial h with respect to xi} is well defined for $x$ in a neighborhood of the unit ball and for $Y$ in a neighborhood of the target $\Sigma$.
We then have
\begin{align*}
\dfrac{\partial^{2}h}{\partial Y_k\partial x_i}(x,Y,X_0)
=
\kappa^2\,\dfrac{\partial h}{\partial Y_k}\,Y_i\,\Delta^{-1/2} 
+
\kappa^2\,h\,\delta_{ik}\,\Delta^{-1/2}
-
\dfrac12 \,\kappa^2\,h\,Y_i\,\Delta^{-3/2}\,\dfrac{\partial \Delta}{\partial Y_k}.
\end{align*}
From Lemma \ref{aux4}, $\Delta\geq C$ so $\dfrac{\partial h}{\partial Y_k}$ is bounded, and therefore $\dfrac{\partial^{2}h}{\partial Y_k\partial x_i}(x,Y,X_0)$ is also bounded.

Therefore we can write for some $\tilde Y\in\overline{\bar YY}$, the straight segment, and for some $\tilde x\in \overline{x_0x}$
\begin{align*}
h(x,Y,X_0)-h(x,\bar Y,X_0)&=\sum_{k=1}^{n}\frac{\partial h}{\partial Y_k}(x,\tilde Y,X_0)(Y_k-\bar Y_k)\\
&=
\sum_{k=1}^{n}\(\frac{\partial h}{\partial Y_k}(x,\tilde Y,X_0)-\frac{\partial h}{\partial Y_k}(x_0,\tilde Y,X_0)\)(Y_k-\bar Y_k)\\
&=
\sum_{k,l=1}^{n}\frac{\partial^{2}h}{\partial Y_k\partial x_l}(\tilde x,\tilde Y,X_0)(x_l-x^{0}_l)(Y_k-\bar Y_k)
\end{align*}
where we have used that $h(x_0,Y,X_0)=|X_0|$, for all $Y$ so $\dfrac{\partial h}{\partial Y_k}(x_0,\tilde Y,X_0)=0$.
It remains to show that $\Delta(x\cdot \tilde Y,\tilde b,|\tilde Y|)\geq C$ and $\Delta(\tilde x\cdot \tilde Y,\tilde b, |\tilde Y|)\geq C$ so the application of the mean value theorem above is justified and we can apply the bounds for the derivatives.
We have $\tilde Y=(1-\lambda)\bar Y+\lambda Y$ for some $\lambda\in[0,1]$.
From \ref{item:hypotheses A}, we can write $Y=X_0+s\,m$ and $\bar Y=X_0+\bar s\,\bar m$ with $x_0\cdot \bar m\geq \kappa$, $x_0\cdot m\geq \kappa$, $x_0=X_0/|X_0|$. So $\tilde Y=X_0+(1-\lambda)\bar s\,\bar m+\lambda s\,m:=X_0+w$, and
\[
|\tilde Y|^{2}-b^{2}=|X_0+w|^{2}-(|X_0|+\kappa\,|w|)^{2}=|w|^{2}(1-\kappa^{2})+2\,X_0\cdot w-2\,\kappa|X_0|\,|w|
\]
and 
\[
X_0\cdot w=(1-\lambda)\bar s\, X_0\cdot \bar m+\lambda s\, X_0\cdot m\geq (1-\lambda)\bar s\kappa|X_0|+\lambda s\kappa|X_0|\geq \kappa|X_0|\,|w|.
\]
Thus
\begin{align*}
|\tilde Y|^{2}-b^{2}&\geq (1-\kappa^{2})|w|^{2}\\
&=\(1-\kappa^2\)\( (1-\lambda)^2\bar s^2
+2(1-\lambda)\lambda \,\bar s\, s\, m\cdot \bar m+\lambda^2 s^2\):=\(1-\kappa^2\)\,\varphi(\lambda).
\end{align*}
Since $\bar m\cdot x_0\geq \kappa$ and $m\cdot x_0\geq \kappa$ with $\kappa<1$, it follows that $\bar m\cdot m\geq -\delta$ for some $0<\delta=\delta(\kappa)<1$.
Then 
\[
\varphi(\lambda)\geq (1-\lambda)^2\bar s^2
-2(1-\lambda)\lambda \,\bar s\, s\, \delta+\lambda^2 s^2,
\]
where the last expression attains its minimum when $\lambda=\dfrac{\bar s^2+\delta \,\bar s \, s}{\bar s^2+2\,\delta \,\bar s\,s+s^2}$. 
Since $\bar s,s$ are bounded, at this minimum the expression is larger than or equal to $C\,(1-\delta^2)\,\min\left\{\bar s^2,s^2\right\}$, with $C>0$ structural.
From \eqref{eqlower bound for s_X} we then obtain
\[
|\tilde Y|^{2}-b^{2}\geq C>0.
\]
Using the argument the proof of Lemma \ref{aux4} with $\epsilon=0$, it follows that $\Delta\(x\cdot \tilde Y,b,|\tilde Y|\)$ and $\Delta\(\tilde x\cdot \tilde Y,b,|\tilde Y|\)$ are both greater than or equal to $\kappa^2\(|\tilde Y|-b \)^2$ obtaining the desired estimate.

\end{proof}

\section{$C^{1,\alpha}$ estimates}\label{sec:C 1 alpha estimates for near field refractors}
\setcounter{equation}{0}

We now turn to the definition of refractor and prove our main theorem.
\begin{definition}\label{def:definition of refractor}
We say $u:\Omega\rightarrow[c_1,c_2]$ is a refractor from $\Omega$ to $\Sigma$ if for each $x_0\in\Omega$, there exists $Y\in\Sigma$ such that
\[
u(x)\geq h(x,Y,X_0)
\]
for all $x\in\Omega$ with $X_0=u(x_0)x_0$.
If this holds, then we say $Y\in \partial u(x_0)$.
Notice that $X=u(x)x\in \Gamma_{c_1c_2}$ for all $x\in\Omega$.
\end{definition}

We will show $u\in C^{1,\alpha}(\Omega)$, which will follow from the following two lemmas.

\begin{lemma}\label{main1}
Assume \ref{item:hypotheses A}, \ref{item:old hypotheses C}, and \ref{item:old hypotheses D}, and let $u$ be a refractor from $\Omega$ to $\Sigma$.
There exist structural constants $K_1,K_2$ 
such that 
if $B_{2\delta}\cap S^{n-1}\subseteq\Omega$, $\bar x,\hat x\in B_{\delta}\cap S^{n-1}$, $\bar Y\in\partial u(\bar x)$ and $\hat Y\in\partial u(\hat x)$, with $|\bar Y-\hat Y|\geq |\bar x-\hat x|$, then, there exists $x_0\in B_{\delta}\cap S^{n-1}$ such that, letting $X_0=u(x_0)x_0$, if $Y(\lambda)\in [\bar Y,\hat Y]_{X_0}$ we have 
\[
u(x)\geq h(x,Y(\lambda),X_0)+K_1\lambda(1-\lambda)|\bar Y-\hat Y|^{2}|x-x_0|^{2}-K_2|\bar x-\hat x||\bar Y-\hat Y|^{2}
\]
for all $x\in\Omega$, $0<\lambda<1$.
\end{lemma}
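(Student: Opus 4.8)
The plan is to combine the pointwise supporting-function inequalities coming from $\bar Y\in\partial u(\bar x)$ and $\hat Y\in\partial u(\hat x)$ with the maximum principle of Lemma \ref{crucial}, after moving the vertex of the ovals to a common point $X_0$. First I would select $x_0$: a natural choice is the midpoint direction, $x_0=(\bar x+\hat x)/|\bar x+\hat x|$, which lies in $B_\delta\cap S^{n-1}$; set $X_0=u(x_0)x_0$, and note $X_0\in\Gamma_{c_1c_2}$. The defining property of a refractor gives $\bar Y\in\partial u(\bar x)$, i.e. $u(x)\ge h(x,\bar Y,\bar X)$ for all $x$, with $\bar X=u(\bar x)\bar x$, and similarly $u(x)\ge h(x,\hat Y,\hat X)$; evaluating the first at $x=x_0$ and using $u(x_0)=h(x_0,\cdot)$ only in a weak sense, I must instead produce supporting ovals \emph{with vertex $X_0$}. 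This is exactly what Lemmas \ref{aux5}, \ref{aux6}, \ref{aux7}, \ref{aux8} are for: they let me compare $h(x,Y,\bar X)$ with $h(x,Y,X_0)$ up to errors controlled by $|\bar X-X_0|$ and $|x-x_0|$.

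The key steps, in order. (1) Show $|\bar X-X_0|\le C|\bar x-\hat x|$ and $|\hat X-X_0|\le C|\bar x-\hat x|$: both $\bar x,\hat x,x_0$ lie in $B_\delta\cap S^{n-1}$ and $u$ is bounded in $[c_1,c_2]$, and in fact one should first establish the elementary Lipschitz-type bound $|u(\bar x)-u(x_0)|\le C|\bar x-x_0|$ from the definition of refractor together with Lemma \ref{aux7} (a supporting oval is $C^{1,1}$ in $x$ with structural constants, so the refractor is locally Lipschitz); this is routine. (2) Replace the vertex: starting from $u(x)\ge h(x,\bar Y,\bar X)$, use Lemma \ref{aux5} (radial move of the vertex, applied after writing $\bar X=(1+t)X_0$ or $X_0=(1+t)\bar X$ along the ray $x_0=\bar x$... here one must be a little careful because $\bar x\ne x_0$, so the vertex move is not purely radial; the honest route is to combine Lemma \ref{aux5} for the radial part with Lemmas \ref{aux6}–\ref{aux8} controlling the angular part) to obtain
\[
u(x)\ge h(x,\bar Y,X_0)-C\,|\bar x-x_0|\,|x-x_0|-C\,|\bar x - x_0|
\]
and similarly with $\hat Y,\hat x$; absorbing $|\bar x-x_0|\le C|\bar x-\hat x|$ and $|x-x_0|\le 2$, and noting $|\bar x-\hat x|\le|\bar Y-\hat Y|$ so $|\bar x-\hat x|\le|\bar x-\hat x||\bar Y-\hat Y|^2/c$ for a structural lower bound... (that last step needs $|\bar Y-\hat Y|$ bounded below, which need not hold, so instead I keep the term $|\bar x-\hat x|$ and note $|\bar x-\hat x|\le|\bar x-\hat x|\,|\bar Y-\hat Y|$ combined with a bound $|\bar Y-\hat Y|\le\operatorname{diam}\Sigma$; since $\operatorname{diam}\Sigma$ is structural this yields the $K_2|\bar x-\hat x||\bar Y-\hat Y|^2$ form after one more multiplication by the bounded quantity $|\bar Y - \hat Y|$ — one should double-check the exact power, but $|\bar Y - \hat Y|\le C$ makes $|\bar x - \hat x|$, $|\bar x - \hat x||\bar Y - \hat Y|$ and $|\bar x - \hat x||\bar Y - \hat Y|^2$ all mutually comparable up to structural constants). (3) Take the two inequalities
\[
u(x)\ge h(x,\bar Y,X_0)-K_2|\bar x-\hat x||\bar Y-\hat Y|^2,\qquad u(x)\ge h(x,\hat Y,X_0)-K_2|\bar x-\hat x||\bar Y-\hat Y|^2,
\]
hence $u(x)\ge\max\{h(x,\bar Y,X_0),h(x,\hat Y,X_0)\}-K_2|\bar x-\hat x||\bar Y-\hat Y|^2$. (4) Apply Lemma \ref{crucial} with this $X_0$, $\bar Y,\hat Y\in\Sigma$: it gives $\max\{h(x,\bar Y,X_0),h(x,\hat Y,X_0)\}\ge C_0\lambda(1-\lambda)|\bar Y-\hat Y|^2|x-x_0|^2+h(x,Y(\lambda),X_0)$ for $Y(\lambda)\in[\bar Y,\hat Y]_{X_0}$. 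Chaining (3) and (4) yields exactly
\[
u(x)\ge h(x,Y(\lambda),X_0)+K_1\lambda(1-\lambda)|\bar Y-\hat Y|^2|x-x_0|^2-K_2|\bar x-\hat x||\bar Y-\hat Y|^2
\]
with $K_1=C_0$, which is the claim.

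The main obstacle is step (2): the transfer of the supporting inequality from vertex $\bar X=u(\bar x)\bar x$ to the common vertex $X_0=u(x_0)x_0$. The two vertices differ both radially and angularly, and the error must be shown to be $O(|\bar x-\hat x|\cdot|x-x_0|)$ plus a term that, after using $|\bar x - \hat x|\le|\bar Y - \hat Y|$ and the structural bound $\operatorname{diam}\Sigma\le C$, fits the allotted $K_2|\bar x-\hat x||\bar Y-\hat Y|^2$ budget without spoiling the good quadratic gain $K_1\lambda(1-\lambda)|\bar Y-\hat Y|^2|x-x_0|^2$. This requires using Lemmas \ref{aux5}–\ref{aux8} in the right combination and being careful that the vertex move keeps all points inside $\Gamma_{c_1c_2}$ so those lemmas apply; one also needs the preliminary local Lipschitz bound on $u$, itself a short consequence of Definition \ref{def:definition of refractor} and Lemma \ref{aux7}. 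Everything else is a direct concatenation of Lemma \ref{crucial} with the definition of refractor.
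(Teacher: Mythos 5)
Your outline correctly identifies the skeleton (get both supporting inequalities relative to a common vertex $X_0$, then invoke Lemma \ref{crucial}), and you even correctly flag the central difficulty in step (2). But the way you propose to resolve it does not work, and the paper uses a genuinely different device at exactly this spot.

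The gap is the choice of $x_0$ and the ensuing vertex transfer. You take $x_0$ to be the normalized midpoint of $\bar x,\hat x$, and then try to move the vertex of the supporting oval from $\bar X=u(\bar x)\bar x$ to $X_0=u(x_0)x_0$ by combining Lemmas \ref{aux5}--\ref{aux8}. None of those lemmas, however, control the error from a genuinely \emph{angular} change in the vertex $X_0$: Lemma \ref{aux5} only moves the vertex radially ($X_0\mapsto(1+t)X_0$), and Lemmas \ref{aux6}--\ref{aux8} estimate derivatives in $x$ and $Y$ with $X_0$ held fixed. There is no estimate in the paper of the form $|h(x,Y,\bar X)-h(x,Y,X_0)|\le C|\bar X-X_0|$ for arbitrary $\bar X,X_0$. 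The paper sidesteps this entirely by choosing $x_0$ \emph{not} as the midpoint but as a zero of $\varphi(x)=h(x,\bar Y,\bar X)-h(x,\hat Y,\hat X)$ on the geodesic $[\bar x,\hat x]$ (which exists since $\varphi(\bar x)\ge 0\ge\varphi(\hat x)$). Setting $\rho_0=h(x_0,\bar Y,\bar X)=h(x_0,\hat Y,\hat X)$ and $\tilde X_0=\rho_0 x_0$, the point $\tilde X_0$ lies on \emph{both} supporting ovals, so $h(\cdot,\bar Y,\bar X)\equiv h(\cdot,\bar Y,\tilde X_0)$ and $h(\cdot,\hat Y,\hat X)\equiv h(\cdot,\hat Y,\tilde X_0)$ are exact identities (the oval $\mathcal O(Y,b)$ depends on the vertex only through $b=|X|+\kappa|X-Y|$). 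The angular transfer is thus free, and what remains is only the radial move $\tilde X_0\to X_0=u(x_0)x_0$, to which Lemma \ref{aux5} applies once one proves the claim $u(x_0)-\rho_0\le C|\bar x-\hat x||\bar Y-\hat Y|$ — this is where Lemmas \ref{aux6} and \ref{aux7} enter, applied at the vertex $\tilde X_0$, not as angular vertex-transfer tools. Without this choice of $x_0$ your step (2) is unjustified.

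A secondary issue: your error-term bookkeeping asserts that $|\bar x-\hat x|$, $|\bar x-\hat x||\bar Y-\hat Y|$, and $|\bar x-\hat x||\bar Y-\hat Y|^2$ are ``mutually comparable up to structural constants'' because $|\bar Y-\hat Y|\le\operatorname{diam}\Sigma$. That is false when $|\bar Y-\hat Y|$ is small: an upper bound on $|\bar Y-\hat Y|$ only gives $|\bar x-\hat x||\bar Y-\hat Y|^2\le C|\bar x-\hat x|$, not the reverse, so you cannot trade a $|\bar x-\hat x|$ error for a $|\bar x-\hat x||\bar Y-\hat Y|^2$ one. (You are right to be suspicious of the power of $|\bar Y-\hat Y|$ in the stated lemma; the paper's proof in fact produces the error $K_2|\bar x-\hat x||\bar Y-\hat Y|$ to the first power, consistent with the subsequent Lemma \ref{main2} and Theorem \ref{thm:main}. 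But resolving this by claiming all three quantities are comparable is not correct.)
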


\begin{proof}
Let $\bar X=u(\bar x)\bar x$ and $\hat X=u(\hat x)\hat x$. 
We have $u(x)\geq h(x,\bar Y,\bar X)$ and $u(x)\geq h(x,\hat Y,\hat X)$, for all $x\in\Omega$.
Let $\varphi(x)=h(x,\bar Y,\bar X)-h(x,\hat Y,\hat X)$. Since $\varphi(\bar x)\geq 0$ and $\varphi(\hat x)\leq 0$, by continuity there exists $x_0\in [\bar x,\hat x]$, the geodesic segment in the unit sphere, such that 
$h(x_0,\bar Y,\bar X)=h(x_0,\hat Y,\hat X):=\rho_0$.
Set $\tilde X_0=\rho_0x_0$ and $X_0=u(x_0)x_0$ and notice $\rho_0\leq u(x_0)$ and by definition of refractor $C_1\leq u(x_0)\leq C_2$, i.e., $X_0\in \Gamma_{C_1C_2}$.
Also, the oval with focus $\bar Y$ that passes through $\bar X$ then also passes through $\tilde X_0$, i.e., 
$h(x,\bar Y,\bar X)=h(x,\bar Y,\tilde X_0)$ for all $x\in S^{n-1}$; and similarly $h(x,\hat Y,\hat X)=h(x,\hat Y,\tilde X_0)$.
Hence $h\(x_0,\bar Y,\bar X\)=h\(x_0,\bar Y,\tilde X_0\)=|\tilde X_0|$. From 
\cite[first identity in (4.7)]{gutierrez-huang:nearfieldrefractor}, $h\(x_0,\bar Y,\bar X\)\geq \dfrac{b-\kappa\,|\bar Y|}{1+\kappa}$ where $b=|\bar X|+\kappa\,|\bar X-\bar Y|$. Therefore, $h\(x_0,\bar Y,\bar X\)\geq \dfrac{1-\kappa}{1+\kappa}|\bar X|=\dfrac{1-\kappa}{1+\kappa}\,u(\bar x)\geq \dfrac{1-\kappa}{1+\kappa}\,C_1$.
Thus, $|\tilde X_0|\geq \dfrac{1-\kappa}{1+\kappa}\,C_1$.

We claim
\[
u(x_0)-\rho_0\leq C\,|\bar x-\hat x|\,|\bar Y-\hat Y|
\]
for some structural constant $C$.
Suppose for a moment the claim holds true.
We can write $X_0=(1+t)\tilde X_0\in \Gamma_{C_1C_2}$ with $t=\dfrac{u(x_0)-\rho_0}{\rho_0}$. So 
applying Lemma \ref{aux5} yields
\[
 h(x,\bar Y,\bar X)=h(x,\bar Y,\tilde X_0)\geq h(x,\bar Y,X_0)-C(u(x_0)-\rho_0)\geq h(x,\bar Y,X_0)-C|\bar x-\hat x||\bar Y-\hat Y|
\]
and 
\[
h(x,\hat Y,\bar X)=h(x,\hat Y,\tilde X_0)\geq h(x,\hat Y,X_0)-C(u(x_0)-\rho_0)\geq h(x,\hat Y,X_0)-C|\bar x-\hat x||\bar Y-\hat Y|
\]
for all $x\in\Omega$.
Thus
\begin{align*}
u(x)&\geq\max\{h(x,\bar Y,\tilde X_0),h(x,\hat Y,\tilde X_0)\}\\
&\geq\max\{h(x,\bar Y, X_0),h(x,\hat Y,X_0)\}-C|\bar x-\hat x||\bar Y-\hat Y|\\
&\geq
h(x,Y(\lambda),X_0)+K_1\,\lambda(1-\lambda)|\bar Y-\hat Y|^{2}|x-x_0|^{2}-K_2\,|\bar x-\hat x||\bar Y-\hat Y|,
\end{align*}
where in the last inequality we have used Lemma \ref{crucial} and renamed the resulting constants.

It then remains to prove the claim.
Since $x_0\in [\bar x,\hat x]$, we can write $x_0=\dfrac{(1-t)\bar x+t\hat x}{|(1-t)\bar x+t\hat x|}:=\dfrac{x_t}{|x_t|}$, for some $t\in [0,1]$.
If $Y_0\in \partial u(x_0)$, then
\begin{align*}
u(x)&\geq h(x,Y_0,X_0)\geq h(x_0,Y_0,X_0)+\langle\nabla_{x}h(x_0,Y_0,X_0),x-x_0\rangle-M|x-x_0|^{2}\\
&=
u(x_0)+\langle\nabla_{x}h(x_0,Y_0,X_0),x-x_0\rangle-M|x-x_0|^{2},
\end{align*}
from Lemma \ref{aux7}.
Therefore
\begin{align}\label{eq:lower estimate convex combinantion}
&(1-t)u(\bar x)+tu(\hat x)\\
&\geq u(x_0)+\langle\nabla_{x}h(x_0,Y_0,X_0),x_t-x_0\rangle
-
M\((1-t)|\bar x-x_0|^{2}+t|\hat x-x_0|^{2}\).\notag
\end{align}
By calculation
\begin{equation}\label{eq:bound for |xt|}
(1-t)|\bar x-x_0|^{2}+t|\hat x-x_0|^{2}=2(1-|x_t|)=2\,\dfrac{1-|x_t|^2}{1+|x_t|}\leq 2\,|\bar x-\hat x|^{2}.
\end{equation}
From \eqref{eq:partial h with respect to xi} $|\nabla_{x}h(x_0,Y_0,X_0)|\leq C$ and since $|x_t-x_0|\leq 2\,|\bar x-\hat x|^{2}$ it then follows from \eqref{eq:lower estimate convex combinantion} that
\[
u(x_0)\leq (1-t)u(\bar x)+tu(\hat x)+C|\bar x-\hat x|^{2}.
\]
Next, since as proved above, $|\tilde X_0|\geq \dfrac{1-\kappa}{1+\kappa}\,C_1$, we can apply Lemma \ref{aux7}
with $X_0$ replaced by $\tilde X_0$ to obtain
\begin{align*}
u(\bar x)&=h(\bar x,\bar Y,\tilde X_0)\leq h(x_0,\bar Y,\tilde X_0)+\langle\nabla_{x}h(x_0,\bar Y,\tilde X_0),\bar x-x_0\rangle+M|\bar x-x_0|^{2}\\
&=
\rho_0+\langle\nabla_{x}h(x_0,\bar Y,\tilde X_0),\bar x-x_0\rangle+M|\bar x-x_0|^{2},
\end{align*}
and similarly
\[
u(\hat x)\leq\rho_0+\langle\nabla_{x}h(x_0,\hat Y,\tilde X_0),\hat x-x_0\rangle+M|\hat x-x_0|^{2}.
\]
Therefore
\begin{align*}
(1-t)u(\bar x)+tu(\hat x)&\leq \rho_0+(1-t)\langle\nabla_{x}h(x_0,\bar Y,\tilde X_0),\bar x-x_0\rangle+t\langle\nabla_{x}h(x_0,\hat Y,\tilde X_0),\hat x-x_0\rangle\\
&\qquad +M\, \((1-t)|\bar x-x_0|^{2}+t|\hat x-x_0|^{2}\).
\end{align*}
The last term is bounded above by $2\,M\,|\bar x-\hat x|^{2}$.
To estimate the middle term we write
\[
\bar x-x_0=\dfrac{\bar x(|x_t|-1)-t(\hat x-\bar x)}{|x_t|},\qquad
\hat x-x_0=\dfrac{\hat x(|x_t|-1)+(1-t)(\hat x-\bar x)}{|x_t|},
\]
so
\begin{align*}
&(1-t)\langle\nabla_{x}h(x_0,\bar Y,\tilde X_0),\bar x-x_0\rangle+t\langle\nabla_{x}h(x_0,\hat Y,\tilde X_0),\hat x-x_0\rangle\\
&=
\dfrac{(1-t)\,t}{|x_t|}\,\left\langle\nabla_{x}h(x_0,\hat Y,\tilde X_0)-\nabla_{x}h(x_0,\bar Y,\tilde X_0),\hat x-\bar x\right\rangle\\
&\qquad +
\dfrac{|x_t|-1}{|x_t|}\(\left\langle\,(1-t)\,\nabla_{x}h(x_0,\bar Y,\tilde X_0),\bar x\right\rangle+\left\langle\,t\,\nabla_{x}h(x_0,\hat Y,\tilde X_0),\hat x\right\rangle\).
\end{align*} 
Since $|\tilde X_0|\geq \dfrac{1-\kappa}{1+\kappa}\,C_1$, from \eqref{eq:bound for |xt|} and \eqref{eq:partial h with respect to xi} the absolute value of the last term is $\leq C|\bar x-\hat x|^{2}$;
and we can apply Lemma \ref{aux6} to obtain that the absolute value of the first term is bounded by $ C|\bar Y-\hat Y||\bar x-\hat x|$. Since $|\bar x-\hat x|\leq |\bar Y-\hat Y|$, the claim is proved, and the lemma follows.

\end{proof}

Now using Lemmas \ref{main1} and \ref{aux8}, we obtain the following.
\begin{lemma}\label{main2}
Under the hypotheses of Lemma \ref{main1}, there exist structural constants $K_1,K_2,K_3$ and $x_0\in B_{\sigma}\cap S^{n-1}$ such that for all $Y(\lambda)\in[\bar Y,\hat Y]_{X_0}$, $Y\in\Sigma$ and $x\in\Omega$,
\[
u(x)\geq h(x,Y,X_0)+K_1\lambda(1-\lambda)|\bar Y-\hat Y|^{2}\,|x-x_0|^{2}-K_2|Y-Y(\lambda)|\,|x-x_0|-K_3|\bar Y-\hat Y|\,|\bar x-\hat x|
\]
where $X_0=u(x_0)x_0$, $0<\lambda<1$.
\end{lemma}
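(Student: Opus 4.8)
The plan is to deduce Lemma \ref{main2} from Lemma \ref{main1} by replacing the target point $Y(\lambda)$ that appears there with an arbitrary $Y\in\Sigma$, absorbing the resulting error via Lemma \ref{aux8}. Concretely, I would fix $\bar x,\hat x$, $\bar Y\in\partial u(\bar x)$, $\hat Y\in\partial u(\hat x)$ satisfying the hypotheses of Lemma \ref{main1}, and let $x_0\in B_\delta\cap S^{n-1}$ and $X_0=u(x_0)x_0\in\Gamma_{c_1c_2}$ be the point that lemma provides, so that
\[
u(x)\geq h(x,Y(\lambda),X_0)+K_1\,\lambda(1-\lambda)\,|\bar Y-\hat Y|^{2}\,|x-x_0|^{2}-K_2\,|\bar x-\hat x|\,|\bar Y-\hat Y|
\]
for all $x\in\Omega$ and $0<\lambda<1$, where $Y(\lambda)\in[\bar Y,\hat Y]_{X_0}$.

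Next I would check the one point needed before invoking Lemma \ref{aux8}, namely that $Y(\lambda)\in\Sigma$. This is immediate from the definition of the curve $[\bar Y,\hat Y]_{X_0}$ in \ref{item:old hypotheses E}: one has $Y(\lambda)=X_0+s_{X_0}(m(\lambda))\,m(\lambda)$ with $m(\lambda)\in[\bar m,\hat m]_{x_0}$, and by \ref{item:hypotheses A}(b) the set $E(X_0)$ satisfies $[\bar m,\hat m]_{x_0}\subset E(X_0)$ while $\Sigma=\{X_0+s_{X_0}(m)\,m:m\in E(X_0)\}$; hence $Y(\lambda)\in\Sigma$.

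Then, given an arbitrary $Y\in\Sigma$, I would apply Lemma \ref{aux8} to the pair $Y,Y(\lambda)\in\Sigma$ with base point $X_0\in\Gamma_{c_1c_2}$, obtaining a structural constant $C$ with
\[
h(x,Y(\lambda),X_0)\geq h(x,Y,X_0)-C\,|Y-Y(\lambda)|\,|x-x_0|\qquad\text{for all }x\in S^{n-1}.
\]
Substituting this into the inequality from Lemma \ref{main1} gives, for every $x\in\Omega$ and $0<\lambda<1$,
\[
u(x)\geq h(x,Y,X_0)+K_1\,\lambda(1-\lambda)\,|\bar Y-\hat Y|^{2}\,|x-x_0|^{2}-C\,|Y-Y(\lambda)|\,|x-x_0|-K_2\,|\bar x-\hat x|\,|\bar Y-\hat Y|.
\]
Relabelling $C$ as $K_2$ and the old constant $K_2$ from Lemma \ref{main1} as $K_3$ (and keeping $K_1$), this is precisely the claimed estimate, with the same $x_0$ as in Lemma \ref{main1}.

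I do not expect any real obstacle: once Lemmas \ref{main1} and \ref{aux8} are available the proof is a single substitution followed by bookkeeping of structural constants. The only step that genuinely has to be verified is the membership $Y(\lambda)\in\Sigma$, which is what makes Lemma \ref{aux8} applicable to the pair $(Y,Y(\lambda))$.
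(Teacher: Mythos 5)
Your argument is correct and is exactly what the paper has in mind: the paper gives no proof of Lemma \ref{main2} beyond the one-line remark ``Now using Lemmas \ref{main1} and \ref{aux8}, we obtain the following,'' and your substitution of the Lemma \ref{aux8} bound into the Lemma \ref{main1} inequality, together with the (necessary) check that $Y(\lambda)\in\Sigma$ via \ref{item:hypotheses A}(b), is precisely that. The only cosmetic discrepancy is that the paper's statement writes $x_0\in B_\sigma\cap S^{n-1}$ while Lemma \ref{main1} produces $x_0\in B_\delta\cap S^{n-1}$; since $\sigma$ is not defined until the proof of Theorem \ref{thm:main}, this is evidently a typo for $\delta$, and your use of $\delta$ is the intended reading.
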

Our main theorem is then the following.
\begin{theorem}\label{thm:main}
Suppose that \ref{item:hypotheses A}, \ref{item:old hypotheses C}, \ref{item:old hypotheses D}, and \ref{item:old hypotheses E} hold.
Let $u$ be a refractor from $\Omega$ to $\Sigma$ and 
assume that there is a constant $C$ such that for all balls $B_{\sigma}$ such that $B_{\sigma}\cap S^{n-1}\subseteq\Omega$, we have
\begin{equation}\label{measure}
H^{n-1}\(\partial u\(B_{\sigma}\cap S^{n-1}\)\)\leq C\,\sigma^{n-1}.
\end{equation}
where $H^{n-1}$ is the $(n-1)$-dimensional Hausdorff measure in $\R^{n}$.

Assume $B_{2\delta}\cap S^{n-1}\subseteq\Omega$. There exist constants $\tilde C_1,\tilde C_2$ depending on $\delta$ and structure, such that if $\bar x,\hat x\in B_{\delta}\cap S^{n-1}$, $\bar Y\in\partial u(\bar x),\hat Y\in\partial u(\hat x)$ with $|\bar Y-\hat Y|\geq \tilde C_1|\bar x-\hat x|$, then $|\bar Y-\hat Y|\leq \tilde C_2\,|\bar x-\hat x|^{\alpha}$ where $\alpha=\dfrac{1}{4n-5}$, $n>1$.
%HERE ALL HYPOTHESES A--E ARE ASSUMED PLUS \eqref{measure}
\end{theorem}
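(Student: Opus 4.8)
The plan is to run a Caffarelli-type first-touching/covering argument, using Lemma \ref{main2} as the source of a quadratic gain and then balancing scales against \ref{item:old hypotheses E} and \eqref{measure}. Write $r=|\bar x-\hat x|$, $\rho=|\bar Y-\hat Y|$; one may assume $r$ is small (when $r$ is bounded below, $\rho\le\diam\Sigma$ already gives the bound after enlarging $\tilde C_2$). Since we will take $\tilde C_1\ge 1$, the hypothesis $\rho\ge\tilde C_1 r$ lets us invoke Lemma \ref{main2}, producing $x_0\in B_\delta\cap S^{n-1}$ and, with $X_0=u(x_0)x_0$,
\[
u(x)\ge h(x,Y,X_0)+K_1\lambda(1-\lambda)\rho^2|x-x_0|^2-K_2|Y-Y(\lambda)|\,|x-x_0|-K_3\rho r
\]
for all $Y(\lambda)\in[\bar Y,\hat Y]_{X_0}$, $Y\in\Sigma$, $x\in\Omega$, $0<\lambda<1$; restricting to $\tfrac14\le\lambda\le\tfrac34$ gives $\lambda(1-\lambda)\ge\tfrac3{16}$.

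The core step I would prove is a sliding lemma: for parameters $\sigma\le\delta$ and $\mu\le\mu_0$ to be fixed, every $Y\in N_\mu(\{[\bar Y,\hat Y]_{X_0}:\tfrac14\le\lambda\le\tfrac34\})\cap\Sigma$ lies in $\partial u(x^*)$ for some $x^*\in B_\sigma(x_0)\cap S^{n-1}$. Picking $\lambda\in[\tfrac14,\tfrac34]$ with $|Y-Y(\lambda)|\le\mu$, the inequality above becomes
\begin{equation}\label{eq:plan-star}
u(x)\ge h(x,Y,X_0)+\tfrac{3K_1}{16}\rho^2|x-x_0|^2-K_2\mu|x-x_0|-K_3\rho r,\qquad x\in\Omega.
\end{equation}
Now slide the oval with focus $Y$ upward from below $u$: set $c^*=\sup\{c>0:h(\cdot,Y,cx_0)\le u\text{ on }\Omega\}$. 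Since $h(x_0,Y,cx_0)=c$, one has $c^*\le u(x_0)$; and using lower semicontinuity of $u$, continuity of $h$, and admissibility of the sliding ovals (guaranteed by \ref{item:old hypotheses C} and Lemma \ref{aux4}), there is a touching point $x^*$ with $h(x^*,Y,c^*x_0)=u(x^*)$, hence $Y\in\partial u(x^*)$. Because $h(x,Y,\cdot)$ is nondecreasing in the scale (Lemma \ref{aux5}) and $c^*\le u(x_0)$, \eqref{eq:plan-star} gives, for $|x-x_0|\in[\sigma,2]$,
\[
u(x)-h(x,Y,c^*x_0)\ \ge\ u(x)-h(x,Y,X_0)\ \ge\ \tfrac{3K_1}{16}\rho^2\sigma^2-K_2\mu\sigma-K_3\rho r,
\]
the last step because $t\mapsto\tfrac{3K_1}{16}\rho^2t^2-K_2\mu t$ is nondecreasing on $[\sigma,2]$ once $\mu\le\tfrac{3K_1}{8K_2}\rho^2\sigma$. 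Thus if in addition $\tfrac{3K_1}{16}\rho^2\sigma^2>K_2\mu\sigma+K_3\rho r$, then necessarily $|x^*-x_0|<\sigma$, so $x^*$ is interior to $\Omega$ and $Y\in\partial u(B_\sigma(x_0)\cap S^{n-1})$.

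This gives $N_\mu(\{[\bar Y,\hat Y]_{X_0}:\tfrac14\le\lambda\le\tfrac34\})\cap\Sigma\subseteq\partial u(B_\sigma(x_0)\cap S^{n-1})$, and combining \ref{item:old hypotheses E} with \eqref{measure} yields $c\,\mu^{n-2}\rho\le C\sigma^{n-1}$. It then remains to choose the scales. I would take $\sigma=C_\sigma\sqrt{r/\rho}$ with $C_\sigma$ structural and large enough that $K_3\rho r<\tfrac12\cdot\tfrac{3K_1}{16}\rho^2\sigma^2$ (so $\sigma\le\delta$ holds precisely when $\rho\ge(C_\sigma/\delta)^2 r$, which fixes $\tilde C_1$), and $\mu=\tfrac{3K_1}{32K_2}\rho^2\sigma$, the largest value compatible with the two requirements above; since $\rho\le\diam\Sigma$ and $\mu\sim\rho^{3/2}r^{1/2}$, the condition $\mu\le\mu_0$ holds once $r$ is small. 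Substituting $\sigma\sim(r/\rho)^{1/2}$ and $\mu\sim\rho^{3/2}r^{1/2}$ into $\mu^{n-2}\rho\lesssim\sigma^{n-1}$ gives $\rho^{(4n-5)/2}\lesssim r^{1/2}$, i.e.\ $\rho\le\tilde C_2\,r^{1/(4n-5)}$, as claimed.

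The genuinely hard analytic input is already isolated in Lemma \ref{main2}; here the delicate points are the localization of the first-touching point inside $B_\sigma(x_0)$ — this is what forces the scale $\sigma\sim\sqrt{r/\rho}$ and requires the sliding ovals to stay admissible Descartes ovals — together with the bookkeeping that balances the three powers in $\mu^{n-2}\rho\lesssim\sigma^{n-1}$ so as to produce exactly the exponent $\alpha=1/(4n-5)$.
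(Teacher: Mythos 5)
Your proposal is correct and follows essentially the same route as the paper: invoke Lemma~\ref{main2}, use a sliding-oval argument to show every $Y$ in a $\mu$-neighborhood of the middle portion of $[\bar Y,\hat Y]_{X_0}$ lies in $\partial u\(B_\sigma(x_0)\cap S^{n-1}\)$, then combine~\ref{item:old hypotheses E} with~\eqref{measure} and balance $\sigma\sim\sqrt{r/\rho}$, $\mu\sim\rho^{3/2}r^{1/2}$ to get $\alpha=1/(4n-5)$. The paper instead parametrizes the sliding ovals by minimizing $|X|+\kappa|X-Y|$ over the graph of $u$ (so admissibility of the touching oval is automatic, since the minimizer $\tilde X$ lies on the graph) and simply observes that the upward parabola $K_1\rho^2t^2-K_2\mu t-K_3\rho r$ is nonnegative for $t\geq t_0$, which makes your extra monotonicity constraint on $\mu$ unnecessary; these are cosmetic differences and the substance is identical.
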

\begin{proof}
By Lemma \ref{main2}, there exists $x_0\in[\bar x,\hat x]\subseteq B_{\delta}$, such that for all $Y(\lambda)\in [\bar Y,\hat Y]_{X_0}$ with $\frac{1}{4}\leq\lambda\leq\frac{3}{4}$, for all $Y\in\Sigma$ and for all $x\in\Omega$, we have
\[
u(x)\geq h(x,Y,X_0)+K_1\,|\bar Y-\hat Y|^{2}\,|x-x_0|^{2}-K_2\,|Y-Y(\lambda)|\,|x-x_0|-K_3\,|\bar Y-\hat Y|\,|\bar x-\hat x|,
\]
where $X_0=u(x_0)x_0$ and $K_i,i=1,2,3$ are structural constants.
Let 
$$t_0=\dfrac{K_2|Y-Y(\lambda)|+\sqrt{K_2^{2}|Y-Y(\lambda)|^{2}+4K_1K_3|\bar Y-\hat Y|^{3}|\bar x-\hat x|}}{2K_1|\bar Y-\hat Y|^{2}}.$$
If $|x-x_0|\geq t_0$, then $K_1|\bar Y-\hat Y|^{2}|x-x_0|^{2}-K_2|Y-Y(\lambda)||x-x_0|-K_3|\bar Y-\hat Y||\bar x-\hat x|\geq 0$.
Let 
$
\mu=\sqrt{|\bar Y-\hat Y|^{3}|\bar x-\hat x|}
$
and suppose $|Y-Y(\lambda)|\leq\mu$, then
\[
t_0\leq\dfrac{K_2+\sqrt{K_2^{2}+4K_1K_3}}{2K_1}\sqrt{\dfrac{|\bar x-\hat x|}{|\hat Y-\bar Y|}}:=K\sqrt{\dfrac{|\bar x-\hat x|}{|\hat Y-\bar Y|}}:=\sigma.
\]
Let $C\geq 1$ be large enough constant depending on $\delta$ and the structural constants such that $\dfrac{K}{\sqrt{C}}\leq\dfrac{\delta}{2}$ and $\dfrac{(\diam(\Sigma))^{2}}{\sqrt C}\leq \mu_0$, with $\mu_0$ the constant in \ref{item:old hypotheses E}.
Set $\tilde C_1:=C$.
If  $|\bar Y-\hat Y|\geq \tilde C_1\,|\bar x-\hat x|$, then 
\[
t_0\leq\sigma\leq\dfrac{\delta}{2}
\]
and
\[
\mu\leq \dfrac{|\bar Y-\hat Y|^{2}}{\sqrt{\tilde C_1}}\leq \dfrac{(\diam(\Sigma))^{2}}{\sqrt {\tilde C_1}}\leq \mu_0.
\]

Let $Y\in\Sigma$ and $|Y-Y(\lambda)|\leq\mu$ for some $\frac{1}{4}\leq\lambda\leq\frac{3}{4}$.
We will show that 
\begin{equation}\label{eq:Y in subdifferential of u small ball}
Y\in\partial u\(B(x_0,\sigma)\cap S^{n-1}\).
\end{equation}
Notice that $B(x_0,\sigma)\cap S^{n-1}\subseteq B_{2\delta}\cap S^{n-1}\subseteq\Omega$, and if $|x-x_0|\geq\sigma$ and $x\in\Omega$, then $u(x)\geq h(x,Y,X_0)$. 
If  $X=u(x)x$, then this implies that $X$ is outside the region enclosed by the oval $\mathcal O\(Y,|X_0|+\kappa|X_0-Y|\)$ thorough $X_0$ and focus $Y$  which implies that the oval through $X$ with focus $Y$ encloses 
$\mathcal O\(Y,|X_0|+\kappa|X_0-Y|\)$.
Therefore $|X|+\kappa|X-Y|\geq |X_0|+\kappa|X_0-Y|$ for $|x-x_0|\geq\sigma$ and $x\in\Omega$, and
by continuity 
\[
\inf\{|X|+\kappa|X-Y|:X=u(x)x, x\in\Omega\}=|\tilde X|+\kappa|\tilde X-Y|
\]
for some $\tilde X=u(\tilde x)\tilde x$ with $\tilde x\in \bar B(x_0,\sigma)\cap S^{n-1}$.
So each $X=u(x)x$, with $x\in \Omega$, is outside the interior of the region enclosed by oval $\mathcal O\(Y,|\tilde X|+\kappa|\tilde X-Y|\)$
which implies that $u(x)\geq h(x,Y,\tilde X)$, for all $x\in\Omega$. Since $u(\tilde x)=|\tilde X|$ we obtain that
$Y\in\partial u(\tilde x)$ and \eqref{eq:Y in subdifferential of u small ball} is proved.

Therefore
\[
N_{\mu}\(\left\{[\bar Y,\hat Y]_{X_0}:\frac{1}{4}\leq\lambda\leq\frac{3}{4}\right\}\)\cap\Sigma\subset\partial u(B(x_0,\sigma)\cap S^{n-1}).
\]
Taking $H^{n-1}$-measures on both sides, using \ref{item:old hypotheses E} on the left hand side and \eqref{measure} on the right hand side yields
\[
C_{\star}\mu^{n-2}|\bar Y-\hat Y|\leq C^{\star}\sigma^{n-1}
\]
which from the definitions of $\mu$ and $\sigma$ implies
$
|\bar Y-\hat Y|\leq \tilde C_2\, |\bar x-\hat x|^{\alpha},
$
with $\tilde C_2$ an structural constant.
\end{proof}

We can now deduce H\"older estimates for the gradients of refractors.

\begin{theorem}\label{thm:c1alpha estimates for general refractors}
If \ref{item:hypotheses A}, \ref{item:old hypotheses C}, \ref{item:old hypotheses D}, and \ref{item:old hypotheses E} hold, and $u$ is a refractor from $\Omega$ to $\Sigma$ in the sense of Definition \ref{def:definition of refractor} satisfying \eqref{measure}, then $u\in C_{\text{\rm loc}}^{1,\alpha}(\Omega)$.
\end{theorem}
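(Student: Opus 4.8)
\begin{proof}[Proof of Theorem \ref{thm:c1alpha estimates for general refractors}]
The plan is to bootstrap from Theorem \ref{thm:main}. First I extract from it that the (a priori multivalued) map $x\mapsto\partial u(x)$ is single-valued and locally $\alpha$-H\"older; next I combine this with the second order touching estimate of Lemma \ref{aux7} to show that $u$ is differentiable, with $\nabla u(x_0)$ equal to the tangential gradient $\nabla_x h(x_0,Y,X_0)$ of the supporting oval at $x_0$; and finally I read off $C^{1,\alpha}$ regularity from the H\"older continuity of this gradient as a function of $x$, $u(x)$, and $\partial u(x)$. Throughout, ``$\nabla_x h$'' on the sphere is understood as the tangential gradient $\nabla^T_x h$ of \eqref{eq:tangential gradient}; the difference between the full and tangential gradients is $O(|x-x_0|^2)$ and is harmless below. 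Since $u$ satisfies \eqref{measure} and the hypotheses \ref{item:hypotheses A}, \ref{item:old hypotheses C}, \ref{item:old hypotheses D}, \ref{item:old hypotheses E} hold, Theorem \ref{thm:main} applies on every ball with $B_{2\delta}\cap S^{n-1}\subseteq\Omega$.

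Fix such a ball. For $\bar x,\hat x\in B_\delta\cap S^{n-1}$, $\bar Y\in\partial u(\bar x)$, $\hat Y\in\partial u(\hat x)$, Theorem \ref{thm:main} says that either $|\bar Y-\hat Y|<\tilde C_1|\bar x-\hat x|$ or $|\bar Y-\hat Y|\le\tilde C_2|\bar x-\hat x|^{\alpha}$; since $|\bar x-\hat x|\le 2\delta$ and $\alpha<1$, in the first case $|\bar Y-\hat Y|\le\tilde C_1(2\delta)^{1-\alpha}|\bar x-\hat x|^{\alpha}$, so in all cases $|\bar Y-\hat Y|\le C|\bar x-\hat x|^{\alpha}$ with $C$ depending on $\delta$ and structure. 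Taking $\bar x=\hat x$ forces $\partial u(x)$ to be a singleton, which I denote $\{Y(x)\}$, and the estimate then says $x\mapsto Y(x)$ is locally $\alpha$-H\"older on $\Omega$. I also record that $u$ is locally Lipschitz on $\Omega$: for each $x_0$ the oval $h(\cdot,Y(x_0),u(x_0)x_0)$ lies below $u$ and touches it at $x_0$, and by \eqref{eq:partial h with respect to xi} together with Lemma \ref{aux4} these supporting functions have $|\nabla_x h|$ bounded by a structural constant on any fixed sub-ball; being the supremum of a uniformly Lipschitz family (and bounded above by $c_2$), $u$ is locally Lipschitz, and hence $x\mapsto X(x):=u(x)x$ is locally Lipschitz as well.

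Now fix $x_0\in\Omega$, set $X_0=u(x_0)x_0$, $Y_0=Y(x_0)$, $p_0=\nabla_x h(x_0,Y_0,X_0)$. From $u\ge h(\cdot,Y_0,X_0)$, $u(x_0)=h(x_0,Y_0,X_0)$, and Lemma \ref{aux7},
\[
u(x)\ge u(x_0)+\langle p_0,x-x_0\rangle-M|x-x_0|^2,\qquad x\in\Omega.
\]
For a matching upper bound, apply Lemma \ref{aux7} with base point $x$: writing $p_x=\nabla_x h(x,Y(x),X(x))$, one has $h(x_0,Y(x),X(x))\ge h(x,Y(x),X(x))+\langle p_x,x_0-x\rangle-M|x-x_0|^2=u(x)+\langle p_x,x_0-x\rangle-M|x-x_0|^2$, while $u(x_0)\ge h(x_0,Y(x),X(x))$ because $Y(x)\in\partial u(x)$; hence $u(x)\le u(x_0)+\langle p_x,x-x_0\rangle+M|x-x_0|^2$. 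Combining the two,
\[
-M|x-x_0|^2\le u(x)-u(x_0)-\langle p_0,x-x_0\rangle\le\langle p_x-p_0,x-x_0\rangle+M|x-x_0|^2.
\]
It remains to estimate $|p_x-p_0|$. Writing $p_x=G(x,Y(x),X(x))$ with $G=\nabla_x h$, the function $G$ is Lipschitz in all three arguments on the relevant compact neighborhoods of $S^{n-1}$, $\Sigma$, and $\Gamma_{c_1c_2}$: Lipschitz in $x$ since $D^2_x h$ is bounded (proof of Lemma \ref{aux7}); Lipschitz in $Y$ by Lemma \ref{aux6} and the bound on $\partial^2 h/\partial Y_k\partial x_i$ from the proof of Lemma \ref{aux8}; and Lipschitz in the vertex by differentiating \eqref{eq:partial h with respect to xi} in $b=|X_0|+\kappa|X_0-Y|$ and using that $\Delta(x\cdot Y)$ is bounded away from zero by Lemma \ref{aux4}, exactly as in Lemma \ref{aux5}. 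Since $x\mapsto Y(x)$ is $\alpha$-H\"older and $x\mapsto X(x)$ is Lipschitz, this gives $|p_x-p_0|\le C|x-x_0|^{\alpha}$ locally. Plugging this into the last display yields $|u(x)-u(x_0)-\langle p_0,x-x_0\rangle|\le C|x-x_0|^{1+\alpha}+M|x-x_0|^2\le C'|x-x_0|^{1+\alpha}$ for $x$ near $x_0$ (using $\alpha<1$). Hence $u$ is differentiable at $x_0$ with $\nabla u(x_0)=p_0=G(x_0,Y(x_0),X(x_0))$, and, the right-hand side being locally $\alpha$-H\"older in $x_0$, so is $\nabla u$. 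Therefore $u\in C^{1,\alpha}_{\mathrm{loc}}(\Omega)$.
\end{proof}

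\noindent\emph{Remark on the plan.} The one step that is not simply a quotation of an earlier lemma is the Lipschitz dependence of $\nabla_x h$ on the vertex $X_0$ (equivalently on $b$): the paper isolates the corresponding statement for $h$ itself (Lemma \ref{aux5}) but not for its $x$-gradient. It does follow from the explicit formula \eqref{eq:partial h with respect to xi} once $\Delta(x\cdot Y)$ is bounded below via Lemma \ref{aux4}, so I expect it to be the main, though routine, point to carry out; the only other care needed is the bookkeeping that all estimates live on $S^{n-1}$, so that ``$\nabla$'' means the tangential gradient and Lemma \ref{aux7} is read accordingly.
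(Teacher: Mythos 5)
Your proof is correct and uses the same ingredients as the paper's, but the differentiability step is organized differently, and the difference is worth noting. The paper first proves that directional derivatives of $u$ exist by squeezing the difference quotient along an arbitrary curve $c(t)$ between the difference quotients of the two supporting ovals at $x_0$ and at $c(t)$, then passes to the full gradient of $\tilde u(X)=u(X/|X|)$, and only afterward establishes the H\"older modulus. You instead prove differentiability and H\"older continuity in a single pass via a two-sided second-order touching bound: the lower bound $u(x)\ge u(x_0)+\langle p_0,x-x_0\rangle-M|x-x_0|^2$ comes from the oval at $x_0$ via Lemma \ref{aux7}, the upper bound $u(x)\le u(x_0)+\langle p_x,x-x_0\rangle+M|x-x_0|^2$ comes from applying Lemma \ref{aux7} at the moving base point $x$ together with $u(x_0)\ge h(x_0,Y(x),X(x))$, and the wedge closes once $|p_x-p_0|\lesssim|x-x_0|^\alpha$. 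This buys a cleaner quantitative statement $|u(x)-u(x_0)-\langle p_0,x-x_0\rangle|\lesssim|x-x_0|^{1+\alpha}$ directly, at the price of needing the Lipschitz dependence of $\nabla_x h$ on the vertex up front rather than only at the very end. You correctly flag that this last dependence is not isolated as a lemma in the paper; it is in fact proved inline at the end of the paper's argument (the third term $|\nabla h(\hat x,\hat Y,\bar X)-\nabla h(\hat x,\hat Y,\hat X)|\le C|\bar X-\hat X|$), by exactly the route you describe via \eqref{eq:partial h with respect to xi} and Lemma \ref{aux4}, so nothing new is needed. Your reduction of the ``either/or'' conclusion of Theorem \ref{thm:main} to the single H\"older bound $|\bar Y-\hat Y|\le C|\bar x-\hat x|^\alpha$ and the observation that taking $\bar x=\hat x$ gives single-valuedness both match the paper, and the local Lipschitz bound for $u$ (upper envelope of a uniformly Lipschitz family of ovals, with the Lipschitz constant controlled by \eqref{eq:partial h with respect to xi} and Lemma \ref{aux4}) is correct and fills in a fact the paper uses implicitly at the very end (``since $u$ is Lipschitz'').
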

\begin{proof}
Let $x_0\in\Omega$. 
We first show that $\partial u(x_0)$ is singleton. Fix $\delta>0$ such that $B(x_0,2\delta)\cap S^{n-1}\subseteq\Omega$ and suppose $Y_0,Y_1\in \partial u(x_0)$, with $Y_1\neq Y_0$. Let $\bar x\in B(x_0,\delta)\cap S^{n-1}$ and $\bar Y\in\partial u(\bar x)$.
By Theorem \ref{thm:main}, $|\bar Y-Y_0|\leq C|\bar x-x_0|^{\alpha}$ and $|\bar Y-Y_1|\leq C|\bar x-x_0|^{\alpha}$ where the constant $C$ depends on $\delta$. Hence, $|Y_1-Y_0|\leq 2C|\bar x-x_0|^{\alpha}$, so if we take $\bar x$ close enough to $x_0$ we get a contradiction.

Let $Y\in\partial u(x_0)$. We first claim that for any $\eta\perp x_0$, $|\eta|=1$, we have $D_{\eta}u(x_0)=\langle \nabla h(x_0,Y,X_0),\eta\rangle$, where $X_0=u(x_0)x_0$.
To see this, let $c$ be any curve  such that $c(0)=x_0$ and $c^{\prime}(0)=\eta$ and $c(t)\in B(x_0,\delta)\cap S^{n-1}$ for all $t$ near $0$.
Since $u$ is a refractor
\[
u(c(t))-u(x_0)\geq h(c(t),Y,X_0)-h(x_0,Y,X_0)
\]
for all $t$ near $0$.
Let $Y(t)\in \partial u(c(t))$ and $X(t)=u(c(t))c(t)$. Since $u(x)\geq h(x,Y(t),X(t))$ for all $x\in\Omega$, we get
\[
u(x_0)-u(c(t))\geq h(x_0,Y(t),X(t))-h(c(t),Y(t),X(t))
\]
for all $t$ near zero.
Therefore, we have for all $t>0$ small
\[
\dfrac{h(c(t),Y,X_0)-h(x_0,Y,X_0)}{t}\leq \dfrac{u(c(t))-u(x_0)}{t}\leq \dfrac{h(c(t),Y(t),X(t))-h(x_0,Y(t),X(t))}{t}.
\]
Note that for each $t$
\[
\dfrac{h(c(t),Y(t),X(t))-h(x_0,Y(t),X(t))}{t}=\langle\nabla h(\tilde x,Y(t),X(t)),\frac{c(t)-c(0)}{t}\rangle
\]
for some $\tilde x\in[x_0,c(t)]$.
From Theorem \ref{thm:main}, $Y(t)\rightarrow Y$ as $t\rightarrow 0$,  and
$X(t)\rightarrow X_0$ by continuity of $u$.
Letting $t\rightarrow 0$ the claim follows. 

Define $\tilde u(X)=u\(X/|X|\)$ for $X$ with $X/|X|\in \Omega$. We will show that for each $x_0\in \Omega$
\begin{equation}\label{eq:tangential gradient}
\nabla\tilde u(x_0)=\nabla^{T}h(x_0,Y,X_0):=\nabla h(x_0,Y,X_0)-\langle\nabla h(x_0,Y,X_0),x_0\rangle x_0.
\end{equation}
Indeed, let $c(t)=\dfrac{x_0+te_i}{|x_0 +te_i|}$ and notice that $c(0)=x_0$ and $c^{\prime}(0)=e_i-\langle x_0,e_i\rangle x_0$.
Since $\dfrac{\tilde u(x_0+te_i)-\tilde u(x_0)}{t}=\dfrac{u(c(t))-u(x_0)}{t}$, letting $t\rightarrow 0$ and using the first part we get
\[
\dfrac{\partial\tilde u}{\partial x_i}(x_0)=\langle\nabla h(x_0,Y,X_0),e_i-\langle x_0,e_i\rangle x_0\rangle
\]
and the desired formula follows.

Next,  let $\bar x,\hat x\in B(x_0,\delta)\cap S^{n-1}\subset \Omega$, and let $\bar Y\in\partial u(\bar x)$ and $\hat Y\in\partial u(\hat x)$. We shall prove that
\begin{equation}\label{eq:holder estimate of gradient tilde u}
|\nabla\tilde u(\bar x)-\nabla\tilde u(\hat x)|\leq C\,|\bar x-\hat x|^{\alpha}.
\end{equation}
First notice that 
\[
|\nabla ^{T}h(\bar x,\bar Y,\bar X)-\nabla ^{T}h(\hat x,\hat Y,\hat X)|\leq 2\,|\nabla h(\bar x,\bar Y,\bar X)-\nabla h(\hat x,\hat Y,\hat X)| +C\,|\bar x-\hat x|,
\]
since $|\nabla h(\bar x,\bar Y,\bar X)|$ is bounded.
Next write
\begin{align*}
|\nabla h(\bar x,\bar Y,\bar X)-\nabla h(\hat x,\hat Y,\hat X)|
&\leq |\nabla h(\bar x,\bar Y,\bar X)-\nabla h(\bar x,\hat Y,\bar X)|\\
&\qquad +
|\nabla h(\bar x,\hat Y,\bar X)-\nabla h(\hat x,\hat Y,\bar X)|\\
&\qquad \qquad +
|\nabla h(\hat x,\hat Y,\bar X)-\nabla h(\hat x,\hat Y,\hat X)|.
\end{align*}
First, by Lemma \ref{aux6} $|\nabla h(\bar x,\bar Y,\bar X)-\nabla h(\bar x,\hat Y,\bar X)|\leq C\,|\bar Y-\hat Y|$.
Second, that $|\nabla h(\bar x,\hat Y,\bar X)-\nabla h(\hat x,\hat Y,\bar X)|\leq C|\bar x-\hat x|$ follows using the mean value theorem in $x$ from the estimates in the proof of Lemma \ref{aux7}, i.e., from \eqref{eq:lower estimated of Delta in square root}, \eqref{eq:definition of Deltat} and \eqref{eq:partial h with respect to xi}.
For the third term,
from \eqref{eq:partial h with respect to xi} we can write
\begin{align*}
&\nabla h(\hat x,\hat Y,\bar X)-\nabla h(\hat x,\hat Y,\hat X)\\
&=
\dfrac{\kappa^{2}\,h(\hat x,\hat Y,\bar X)\hat Y}{\sqrt{\(\bar b-\kappa^{2}\, x\cdot \hat Y\)^{2}-(1-\kappa^{2})\(\bar b^{2}-\kappa^{2}|\hat Y|^{2}\)}}
-\dfrac{\kappa^{2}\,h(\hat x,\hat Y,\hat X)\hat Y}{\sqrt{\(\hat b-\kappa^{2}\, x\cdot \hat Y\)^{2}-(1-\kappa^{2})\(\hat b^{2}-\kappa^{2}|\hat Y|^{2}\)}},
\end{align*}
where $\bar b=|\bar X|+\kappa|\hat Y-\bar X|$ and $\hat b=|\hat X|+\kappa|\hat Y-\hat X|$. 
Since $\hat Y\in\Sigma$ and $\bar X,\hat X\in \Gamma_{C_1C_2}$, and noticing that $|\bar b-\hat b|\leq C_\kappa\,|\bar X-\hat X|$, it follows from Definitions \eqref{eq:polar radius of the oval}, \eqref{eq:definition of function h}, and Lemma \ref{aux4} that 
\[
|\nabla h(\hat x,\hat Y,\bar X)-\nabla h(\hat x,\hat Y,\hat X)|\leq C\,|\bar X-\hat X|.
\] 
Therefore
\[
|\nabla\tilde u(\bar x)-\nabla\tilde u(\hat x)|
\leq 
C\,\(|\bar x-\hat x|+|\bar Y-\hat Y|+ |\bar X-\hat X|\).
\]
We also have 
$|\bar X-\hat X|=|u(\bar x)\bar x-u(\hat x)\hat x|\leq C_1\,|\bar x-\hat x|+|u(\bar x)-u(\hat x)|\leq C\,|\bar x-\hat x|$, since $u$ is Lipschitz.
From Theorem \ref{thm:main} we then obtain \eqref{eq:holder estimate of gradient tilde u} and the proof is complete.

\end{proof}

\subsection{Regularity of weak solutions}

We now apply Theorem \ref{thm:c1alpha estimates for general refractors} to show that weak solutions to the near field refractor problem defined with the tracing map are $C_{loc}^{1,\alpha}$.
Existence of weak solutions is proved in \cite{gutierrez-huang:nearfieldrefractor}.

Recall that the tracing mapping $\mathcal T_u$ is defined as follows:
given $Y\in\Sigma$, $\mathcal T_u(Y)=\{x\in \Omega:Y\in\partial u(x)\}$.
A weak solution $u$ to the refractor problem from $\Omega$ to $\Sigma$ satisfies 
\begin{equation}\label{eq:weak refractor solution}
\mu(\mathcal T_u(B))=\nu(B),\qquad \text{ for all Borel $B\subset\Sigma$.}
\end{equation}
Here $\mu=f(x)\,dx$ with $f\in L^1(\Omega)$, $f>0$ a.e., and $\nu$ is a measure on the target $\Sigma$ so that  the energy conservation condition $\int_\Omega f(x)\,dx=\nu(\Sigma)$ holds.

\begin{theorem}\label{thm:regularity of Brenier solutions}
Assume that \ref{item:hypotheses A}, \ref{item:old hypotheses C}, \ref{item:old hypotheses D}, and \ref{item:old hypotheses E} hold and the target $\Sigma$ is differentiable.
If $f\in L^\infty(\Omega)$, $\nu\ll H^{n-1}$, and $H^{n-1}=g\,d\nu$ with $0\leq g(x)\leq \alpha$ for a.e. $x\in\Sigma$, then each weak solution $u$ to \eqref{eq:weak refractor solution} satisfies \eqref{measure}, and therefore from Theorem \ref{thm:c1alpha estimates for general refractors} $u\in C^{1,\alpha}_{loc}$.
\end{theorem}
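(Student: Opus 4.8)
The plan is to verify the measure condition \eqref{measure} for a weak solution $u$; once this is done, the regularity assertion is immediate from Theorem \ref{thm:c1alpha estimates for general refractors}. Fix a ball $B_\sigma$ with $B_\sigma\cap S^{n-1}\subseteq\Omega$, and set $E=B_\sigma\cap S^{n-1}$ and $Z=\partial u(E)\subseteq\Sigma$. Since $H^{n-1}=g\,d\nu$ with $0\le g\le\alpha$, one has $H^{n-1}(Z)=\int_Z g\,d\nu\le\alpha\,\nu(Z)$, and the weak-solution identity \eqref{eq:weak refractor solution} applied to $B=Z$ gives $\nu(Z)=\mu(\mathcal{T}_u(Z))$. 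As $\mu=f\,dx$ with $f\in L^\infty$, the estimate \eqref{measure} will follow once we show that $\mathcal{T}_u(\partial u(E))$ coincides with $E$ up to a Lebesgue-null set, because then
\[
H^{n-1}(\partial u(E))\le\alpha\,\nu(\partial u(E))=\alpha\,\mu(\mathcal{T}_u(\partial u(E)))\le\alpha\,\|f\|_{L^\infty}\,|E|\le C_n\,\alpha\,\|f\|_{L^\infty}\,\sigma^{n-1},
\]
where $|\cdot|$ is surface measure on $S^{n-1}$; note that the inclusion $E\subseteq\mathcal{T}_u(\partial u(E))$ is automatic, since every $x\in E$ has a nonempty $\partial u(x)$.

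To obtain the reverse inclusion I would first record that $u$ is Lipschitz: it is a supremum of the radial functions $h(\cdot,Y,X_0)$ over the compact parameter set $Y\in\Sigma$, $X_0\in\Gamma_{c_1c_2}$, whose $x$-gradients are uniformly bounded by \eqref{eq:partial h with respect to xi} and Lemma \ref{aux4}; hence $u$ is differentiable a.e.\ by Rademacher's theorem. At a point $x$ of differentiability the supporting oval is tangent to the graph of $u$, so Snell's law determines the refracted direction from $x$ and $\nabla u(x)$, and by \ref{item:hypotheses A}(b) the refracted ray meets $\Sigma$ at exactly one point; therefore $\partial u(x)$ is a singleton, and the set $N$ where $\partial u$ is multivalued is contained in the Lebesgue-null non-differentiability set of $u$. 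Next, every contact set is Lebesgue-null: taking $B=\{Y\}$ in \eqref{eq:weak refractor solution} gives $\mu(\mathcal{T}_u(Y))=\nu(\{Y\})=0$ because $H^{n-1}(\{Y\})=0$ and $\nu\ll H^{n-1}$, and $f>0$ a.e.\ then forces $|\mathcal{T}_u(Y)|=0$; in particular $\nu$ has no atoms, so no supporting oval touches the graph of $u$ along a set of positive surface measure.

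The crux — and the step I expect to be the main obstacle — is to upgrade this to the statement that the tracing map $\mathcal{T}_u$ is single-valued off a Lebesgue-null set, equivalently that no oval $\mathcal{O}(Y,b)$ admissible in Definition \ref{def:definition of refractor} touches the graph of $u$ at two distinct points (a strict convexity of $u$ in the oval sense). Granting this, $\mathcal{T}_u(Y)$ is a single point $x_Y$ for every $Y\in\partial u(\Omega)$, and for $Y\in\partial u(E)$ that point lies in $E$, so $\mathcal{T}_u(\partial u(E))\subseteq E$ and the reduction above closes. To prove the strict convexity I would argue by contradiction: if $x_1\ne x_2$ were joined by a common supporting oval, one takes a nearby point $\bar x$ with a subgradient $\bar Y$ distinct from the common focus, forms the curve $[\bar Y,\hat Y]_{X_0}$ on $\Sigma$ between $\bar Y$ and a subgradient $\hat Y$ on the far side, and exploits the \emph{strict} gain $C_0\lambda(1-\lambda)|\bar Y-\hat Y|^{2}|x-x_0|^{2}$ in Lemma \ref{crucial}, together with Lemmas \ref{aux5} and \ref{aux7} exactly as in the proof of Lemma \ref{main1}, to manufacture an oval with focus on this curve lying below $u$ yet strictly above the common oval near one of the $x_i$, contradicting $Y\in\partial u(x_i)$ (this is the refractor analogue of the fact that strictly regular cost functions have strictly $c$-convex potentials). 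Once $\mathcal{T}_u$ is single-valued a.e., the equality $\mathcal{T}_u(\partial u(E))=E$ up to a null set holds, \eqref{measure} follows, and Theorem \ref{thm:c1alpha estimates for general refractors} completes the proof.
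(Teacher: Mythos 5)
Your reduction is sound and matches the paper's: once one knows the set of ``bad'' targets $S^{\star}=\{Y:\ Y\in\partial u(\bar x)\cap\partial u(\hat x),\ \bar x\neq\hat x\}$ is $\nu$-null (equivalently, by $\nu\ll H^{n-1}$, $H^{n-1}$-null), one gets $\mathcal T_u(\partial u(E))\subset E\cup\mathcal T_u(S^{\star})$ with $\mu(\mathcal T_u(S^{\star}))=\nu(S^{\star})=0$, and then $H^{n-1}(\partial u(E))\le\alpha\,\nu(\partial u(E))=\alpha\,\mu(\mathcal T_u(\partial u(E)))\le\alpha\,\|f\|_\infty\,|E|$, giving \eqref{measure}. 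Your preliminary observations (boundedness of $\nabla_x h$, Rademacher applied to $u$, single-valuedness of $\partial u$ at points of differentiability, atomlessness of $\nu$) are also correct, but they do not close the argument: $\partial u$ being single-valued a.e.\ in $\Omega$ says nothing about $\mathcal T_u$ being single-valued $H^{n-1}$-a.e.\ in $\Sigma$.

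The genuine gap is in what you yourself flag as the crux. You propose to prove strict refractor-convexity (no supporting oval touches the graph of $u$ at two distinct points) by a contradiction argument built on the strict gain in Lemma \ref{crucial}; but this argument is only sketched, it is a strictly stronger statement than what is needed, and, more tellingly, it nowhere uses the hypothesis that $\Sigma$ is differentiable, which is explicitly required in the theorem. The paper's proof is of an entirely different and softer nature. It defines the Legendre-type transform $u^{\star}(Y)=\min\{|X|+\kappa|X-Y|:\ X=u(x)x,\ x\in\Omega\}$, observes that $u^{\star}$ is Lipschitz and that each $\bar X$ with $\bar Y\in\partial u(\bar x)$ gives a one-sided tangent estimate $u^{\star}(Y)\le u^{\star}(\bar Y)+\kappa|\bar X-Y|-\kappa|\bar X-\bar Y|$, and then composes $u^{\star}$ with a Lipschitz chart $\psi$ of $\Sigma$. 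If the composition were differentiable at a point $Y_0'$ with $Y_0=\psi(Y_0')\in S^{\star}$ coming from distinct $\bar X,\hat X$, then the vector $w=\frac{Y_0-\bar X}{|Y_0-\bar X|}-\frac{Y_0-\hat X}{|Y_0-\hat X|}$ would be normal to $T_{Y_0}\Sigma$, forcing the line through $Y_0$ in direction $\frac{Y_0-\bar X}{|Y_0-\bar X|}+\frac{Y_0-\hat X}{|Y_0-\hat X|}$ to lie in $T_{Y_0}\Sigma$; that line meets $[\bar X,\hat X]$, which contradicts the fact — proved at the start of the paper's argument \emph{from the differentiability of $\Sigma$ and visibility} — that $T_{Y_0}\Sigma$ cannot meet $\Gamma_{c_1c_2}$. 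Rademacher then gives $H^{n-1}(S^{\star})=0$. Note that this argument never invokes Lemma \ref{crucial} and never claims strict convexity. If you want to pursue your route, you would need to actually carry out the strict-convexity argument (including the necessary localization, in the spirit of Caffarelli/Loeper), and even then you would have proved more than is required; the paper's $u^{\star}$/Rademacher argument is shorter, weaker, and directly uses the assumed smoothness of $\Sigma$, which is the natural place for that hypothesis to enter.
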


\begin{proof}
Since $\Sigma$ is differentiable, then the visibility condition implies that the tangent plane to $\Sigma$ at each point cannot intersect the interior of $\Gamma_{c_1,c_2}$.
Indeed, suppose the tangent plane $T_{Y}$ to $\Sigma$ at $Y$ intersects $\Gamma_{c_1,c_2}$ at $X_0$ and with a ball $B(X_0,\epsilon)\subset \Gamma_{c_1,c_2}$.
The segment from $X_0$ to $Y$ is on $T_{Y}$ and by visibility for each $X\in B(X_0,\epsilon)$, the segment from $X$ to $Y$ intersects $\Sigma$ only at $Y$. 
%This means that the cone with vertex $Y$ and base $B(X_0,\epsilon)$ intersects $\Sigma$ only at $Y$.
This implies that $\Sigma$ cannot be differentiable at $Y$, because if $\Sigma$ were differentiable at $Y$, then $T_Y\cap C=\{Y\}$ with $C$ the cone with vertex $Y$ and base $B(X_0,\epsilon)$, but $\overline{X_0Y}\subset 
T_Y\cap C$.

Now let 
\[
S^{\star}=\{Y\in\Sigma:Y\in\partial u(\bar x)\cap\partial u(\hat x),\:\bar x\neq\hat x\in\Omega\}.
\]
We shall prove that $H^{n-1}(S^{\star})=0$.

Define $u^{\star}:\R^{n}\rightarrow \R$ by 
\[
u^{\star}(Y)=\min\left\{|X|+\kappa|X-Y|:X=u(x)x,\;x\in\Omega\right\}.
\]
It is easy to see that $u^{\star}$ is Lipschitz in $\R^{n}$.

If $\bar Y\in\partial u(\bar x)$, $X=u(x)x$, $\bar X=u(\bar x)\bar x$, with $u$ a refractor, then
$X$ is outside the interior of the region enclosed by the oval $\mathcal O(\bar Y,b)$ with $b=|\bar X|+\kappa\,|\bar X-\bar Y|$. This means that the region enclosed by an oval passing through $X$ with focus $\bar Y$ contains
$\mathcal O(\bar Y,b)$, that is,  
\[
|X|+\kappa|X-\bar Y|\geq |\bar X|+\kappa|\bar X-\bar Y|.
\]
Hence
\[
u^{\star}(\bar Y)=|\bar X|+\kappa|\bar X-\bar Y|
\] 
and so
\[
u^{\star}(Y)\leq u^{\star}(\bar Y)+\kappa|\bar X-Y|-\kappa|\bar X-\bar Y|,
\]
for all $Y\in \R^{n}$.
In particular, if $Y_0\in S^{\star}$ and say $Y_0\in\partial u(\bar x)\cap\partial u(\hat x)$, we then have
\[
u^{\star}(Y)\leq u^{\star}(Y_0)+\kappa|\bar X-Y|-\kappa|\bar X-Y_0|,
\]
and
\[
u^{\star}(Y)\leq u^{\star}(Y_0)+\kappa|\hat X-Y|-\kappa|\hat X-Y_0|,
\]
$\hat X=u(\hat x)\hat x$, for all $Y\in \R^{n}$.

Let $O\subseteq \R^{n-1}$ be open and let $\psi:\R^{n-1}\rightarrow \R^{n}$ be Lipschitz such that $\Sigma=\psi(\bar O)$ and $\psi$ is one to one in $\bar O$.
Set $\tilde{S}=\psi^{-1}\(S^{\star}\)$. We show that $H^{n-1}(\tilde{S})=0$.

Define $h(Y^{\prime})=u^{\star}(\psi(Y^{\prime}))$.
Since $u^\star$ is Lipschitz, $h$ is Lipschitz in $\R^{n-1}$. We claim that $h$ is not differentiable in $\tilde S$.
Let $Y_0^{\prime}\in \tilde{S}$, so $Y_0=\psi(Y_0^{\prime})\in S^\star$, that is, there are $\bar x\neq \hat x$ in $\Omega$ with $Y_0\in\partial u(\bar x)\cap\partial u(\hat x)$.
Then
$
h(Y^{\prime})\leq h(Y_0^{\prime})+|\bar X-\psi(Y^{\prime})|-|\bar X-\psi(Y_0^{\prime})|
$ and 
$
h(Y^{\prime})\leq h(Y_0^{\prime})+|\hat X-\psi(Y^{\prime})|-|\hat X-\psi(Y_0^{\prime})|
$ 
for all $Y'\in \R^n$ with $\bar X=u(\bar x)\bar x$ and $\hat X=u(\hat x)\hat x$. 
If $h$ were differentiable at $Y_0^{\prime}$, then we would have 
\[
\nabla_{Y^{\prime}}(|\bar X-\psi(Y^{\prime})|)=\nabla_{Y^{\prime}}(|\hat X-\psi(Y^{\prime})|)
\]
at $Y^{\prime}=Y_0^{\prime}$.
Thus 
$
D\psi(Y_0^{\prime})^{T}\dfrac{Y_0-\bar X}{|Y_0-\bar X|}=D\psi(Y_0^{\prime})^{T}\dfrac{Y_0-\hat X}{|Y_0-\hat X|}.
$
Letting $w=\dfrac{Y_0-\bar X}{|Y_0-\bar X|}-\dfrac{Y_0-\hat X}{|Y_0-\hat X|}$, yields
$D\psi(Y_0^{\prime})^{T}w=0$. 
If $v_{k}$ denote the columns of $D\psi(Y_0^{\prime})$, this means that $\langle v_{k},w\rangle=0$, for $1\leq k\leq n-1$. 
Since the $v_{k}$'s span the tangent plane to $\Sigma$ at $Y_0$, we get that
$w$ is normal to the tangent plane to $\Sigma$ at $Y_0$.
In particular, the line $Y_0+t\,\(\dfrac{Y_0-\bar X}{|Y_0-\bar X|}+\dfrac{Y_0-\hat X}{|Y_0-\hat X|}\)$ is contained in the tangent plane to $\Sigma$ at $Y_0$. 
But it is easy to see that this line intersects the straight segment $[\bar X,\hat X]$, which implies that either both $\bar X$ and $\hat X$ are on the tangent plane or they are on opposite sides of the tangent plane. In either case, since $\bar X$ and $\hat X$ are on the graph of $u$, the tangent plane intersects the graph of $u$, which contradicts our initial assumption.

Since $h$ is Lipschitz we obtain that $H^{n-1}(\tilde{S})=0$.
This implies, since $\psi$ is Lipschitz, that $H^{n-1}(S^{\star})=0$ as we wanted to show.

From the assumption, $\nu \ll H^{n-1}$, we will show first that $\nu(\partial u(B))\leq \mu(B)$ for each $B\subset\Omega$ Borel set.
If
\[
S=\{x\in\Omega:\text{there exists $\bar x\neq x$, $\bar x\in \Omega$ such that $\partial u(x)\cap\partial u(\bar x)\neq\emptyset$}\},
\]
let us see that $\mu(S)=0$.
Indeed, since $\mathcal T_u(S^{\star})=S$, from the definition of weak solution $\nu(S^{\star})=\mu(\mathcal T_u(S^{\star}))=\mu(S)$. Since 
$H^{n-1}(S^{\star})=0$, we then get $\mu(S)=0$.
On the other hand, $\mathcal T_u(\partial u(B))\subset B\cup S$ so
$\nu(\partial u(B))=\mu(\mathcal T_u(\partial u(B)))\leq\mu(B\cup S)\leq \mu(B)$ and we are done.

Therefore, to conclude the proof of the theorem, we prove that $u$ verifies \eqref{measure}.
Indeed, for each ball $B_\sigma$ with $B_{\sigma}\cap S^{n-1}\subset \Omega$ we have
\begin{align*}
H^{n-1}\(\partial u\(B_{\sigma}\cap S^{n-1}\)\)
&\leq
\alpha\,\nu\(\partial u\(B_{\sigma}\cap S^{n-1}\)\)
\leq
\alpha\,\mu\(B_{\sigma}\cap S^{n-1}\)\\
&\leq 
\alpha \,\|f\|_\infty \text{surface area}\(B_{\sigma}\cap S^{n-1}\)
\leq 
C\,\sigma^{n-1}.
\end{align*}

\end{proof}

\providecommand{\bysame}{\leavevmode\hbox to3em{\hrulefill}\thinspace}
\providecommand{\MR}{\relax\ifhmode\unskip\space\fi MR }
% \MRhref is called by the amsart/book/proc definition of \MR.
\providecommand{\MRhref}[2]{%
  \href{http://www.ams.org/mathscinet-getitem?mr=#1}{#2}
}
\providecommand{\href}[2]{#2}

\end{document}